\newcommand\mycolor[1]{}
\setlist[enumerate]{itemsep=0.3ex, topsep=0.3ex, label={\rm(\arabic*)}}
\setlist[itemize]{itemsep=0.3ex, topsep=0.3ex, leftmargin=4ex}
\newtheorem{theorem}[subsection]{Theorem}
\newtheorem{lemma}[subsection]{Lemma}
\newtheorem{corollary}[subsection]{Corollary}
\newtheorem{remark}[subsection]{Remark}
\newtheorem{definition}[subsection]{Definition}
\newcommand\testshape{family=\f@family; series=\f@series; shape=\f@shape.}
\def\myemphInternal#1{\if n\f@shape%
\begingroup\itshape #1\endgroup\/%
\else\begingroup\sf\itshape\small #1\endgroup%
\fi}
\def\myemph{\futurelet\testchar\MaybeOptArgmyemph}
\def\MaybeOptArgmyemph{\ifx[\testchar \let\next\OptArgmyemph
                 \else \let\next\NoOptArgmyemph \fi \next}
\def\OptArgmyemph[#1]#2{\index{#1}\myemphInternal{#2}}
\def\NoOptArgmyemph#1{\myemphInternal{#1}}
\newcommand\term[2][\empty]{\myemph[#1]{#2}}
\newcommand\monoArrow{\lhook\joinrel\rightarrow}
\newcommand\xmonoArrow[1]{\lhook\joinrel\xrightarrow{~#1~}}
\newcommand\epiArrow{\rightarrow\!\!\!\!\!\to}
\newcommand\xepiArrow[1]{\xrightarrow{#1}\!\!\!\!\!\to}
\newcommand\Aman{A}
\newcommand\Cman{C}
\newcommand\Fman{F}
\newcommand\Kman{K}
\newcommand\Lman{L}
\newcommand\Mman{M}
\newcommand\Pman{P}
\newcommand\Qman{Q}
\newcommand\Uman{U}
\newcommand\Xman{X}
\newcommand\Yman{Y}
\newcommand\cov[1]{\tilde{#1}} %
\newcommand\tQman{\cov{\Qman}}
\newcommand\bC{\mathbb{C}}
\newcommand\bN{\mathbb{N}}
\newcommand\bR{\mathbb{R}}
\newcommand\bZ{\mathbb{Z}}
\newcommand\id{\mathrm{id}}          
\newcommand\Int{\mathrm{Int}}        
\newcommand\myprod{\mathop{\prod}\limits}
\newcommand\GL{\mathrm{GL}}
\newcommand\UnitGroup{\mathbf{1}}
\newcommand\Aut{\mathrm{Aut}}       
\newcommand\Diff{\mathcal{D}}       
\newcommand\Orb{\mathcal{O}}        
\newcommand\Stab{\mathcal{S}}       
\newcommand\DiffId{\Diff_{\id}}     
\newcommand\StabId{\Stab_{\id}}     
\newcommand\Cinfty{\mathcal{C}^{\infty}}
\newcommand\Ci[2]{\mathcal{C}^{\infty}(#1,#2)}               
\newcommand\Cid[2]{\mathcal{C}_{\partial}^{\infty}(#1,#2)}   
\newcommand\Stabilizer[1]{\Stab(#1)}
\newcommand\StabilizerId[1]{\StabId(#1)}
\newcommand\StabilizerIsotId[1]{\Stab'(#1)}
\newcommand\Orbit[1]{\Orb(#1)}
\newcommand\OrbitComp[2]{\Orb_{#1}(#2)}
\newcommand\FolStabilizer[1]{\Delta(#1)}
\newcommand\FolStabilizerIsotId[1]{\Delta'(#1)}
\newcommand\FolStabilizerNbh[1]{\Delta_{\nb}(#1)}
\newcommand\GKR{\mathbf{G}}
\newcommand\GrpKR[1]{\GKR(#1)}
\newcommand\GrpKRIsotId[1]{\GKR'(#1)}
\newcommand\fixsymbol{\mathrm{fix}}
\newcommand\invsymbol{}
\newcommand\nbsymbol{\mathrm{nb}}
\newcommand\folsymbol{*}
\newcommand\DiffInv[3][\empty]{\Diff_{\invsymbol}(#2,#3\ifx\empty #1\relax\else,#1\fi)}
\newcommand\DiffFix[3][\empty]{\Diff_{\fixsymbol}(#2,#3\ifx\empty #1\relax\else,#1\fi)}
\newcommand\DiffNb[3][\empty]{\Diff_{\nbsymbol}(#2,#3\ifx\empty #1\relax\else,#1\fi)}
\newcommand\DiffHFix[3][\empty]{\Diff^{0}_{\fixsymbol}(#2,#3\ifx\empty #1\relax\else,#1\fi)}
\newcommand\DiffHNb[3][\empty]{\Diff^{0}_{\nbsymbol}(#2,#3\ifx\empty #1\relax\else,#1\fi)}
\newcommand\DiffPlusFix[3][\empty]{\Diff^{+}_{\fixsymbol}(#2,#3\ifx\empty #1\relax\else,#1\fi)}
\newcommand\FDiff[2][\empty]{\Diff^{\folsymbol}(#2\ifx\empty #1\relax\else,#1\fi)}
\newcommand\FDiffFix[2][\empty]{\Diff^{\folsymbol}_{\fixsymbol}(#2\ifx\empty #1\relax\else,#1\fi)}
\newcommand\FDiffA[2][\empty]{\Diff^{=}(#2\ifx\empty #1\relax\else,#1\fi)}
\newcommand\VBAut[2][\empty]{\GL(#2\ifx\empty #1\relax\else,#1\fi)}
\newcommand\DiffLP{\Diff}  
\newcommand\DiffLPInv[3][\empty]{\DiffLP_{inv}(#2,#3\ifx\empty#1\relax\else,#1\fi)}
\newcommand\DiffLPFix[3][\empty]{\DiffLP_{fix}(#2,#3\ifx\empty#1\relax\else,#1\fi)}
\newcommand\DiffLPNb[3][\empty]{\DiffLP_{nb}(#2,#3\ifx\empty#1\relax\else,#1\fi)}
\newcommand\func{f}
\newcommand\gfunc{g}
\newcommand\dif{h}
\newcommand\gdif{g}
\newcommand\qdif{q}
\newcommand\px{x}
\newcommand\pw{w}
\newcommand\Circle{S^1}
\newcommand\PrjPlane{\mathbb{RP}^2}
\newcommand{\comment}[1]{}
\newcommand{\DD}[2]{D_{#1}^{#2}}
\newcommand{\EE}[2]{E_{#1}^{#2}}
\newcommand\KerSAct{\Stabilizer{\func;\PlCompSet}}
\newcommand\ael{a}
\newcommand\bel{b}
\newcommand\cel{c}
\newcommand\del{d}
\newcommand\gel{g}
\newcommand\vel{v}
\newcommand\mel{{\color{Mulberry}m}}
\newcommand\nel{{\color{Mulberry}n}}
\newcommand\aeli[1]{\ael_{#1}}
\newcommand\beli[1]{\bel_{#1}}
\newcommand\celi[1]{\cel_{#1}}
\newcommand\deli[1]{\del_{#1}}
\newcommand\qhom{q}
\newcommand\smd[3]{#1\rtimes_{#2}#3}
\newcommand\zhom{\psi}
\newcommand{\nb}{\mathrm{nb}}
\newcommand\DiffNbh{\Diff_{{\nb}}}     
\newcommand\StabilizerNbh[1]{\Stab_{{\nb}}(#1)}  
\newcommand\FSp[1]{\mathcal{F}(#1,\Pman)}
\newcommand\FSpR[1]{\mathcal{F}(#1,\bR)}
\newcommand\FSpS[1]{\mathcal{F}(#1,S^1)}
\newcommand\CrComp{K}
\newcommand\YYi[1]{\Yman_{#1}}
\newcommand{\classGroups}{\mathcal{G}}
\newcommand\MBand{\mathbb{M}}
\newcommand\hMBand{\PrjPlane} 
\newcommand\bdpt{x^{*}}
\newcommand\CompSet{\mathbf{Y}}
\newcommand\PlCompSet{\hat{\CompSet}}
\newcommand\FST[1]{\FolStabilizer{\func|_{#1}, \partial#1}}
\newcommand\PFST[1]{\pi_0\FST{#1}}
\newcommand\ST[1]{\Stabilizer{\func|_{#1}, \partial#1}}
\newcommand\PST[1]{\pi_0\ST{#1}}
\newcommand\GST[1]{\GrpKR{\func|_{#1}, \partial#1}}
\newcommand\GSTB[1]{\GrpKR{\func, \partial#1}}
\newcommand\STN[1]{\StabilizerNbh{\func|_{#1}, \partial#1}}
\newcommand\PSTN[1]{\pi_0\STN{#1}}
\newcommand\STB[1]{\Stabilizer{\func, \partial#1}}
\newcommand\PSTB[1]{\pi_0\STB{#1}}
\newcommand\FSTB[1]{\FolStabilizer{\func, \partial#1}}
\newcommand\PFSTB[1]{\pi_0\FSTB{#1}}
\newcommand\Agrp{{\color{red}A}}
\newcommand\Bgrp{{\color{red}B}}
\newcommand\Cgrp{{\color{red}C}}
\newcommand\Ggrp{{\color{red}G}}
\newcommand\Hgrp{{\color{red}H}}
\newcommand\Kgrp{{\color{red}K}}
\newcommand\Lgrp{{\color{red}L}}
\newcommand\Pgrp{{\color{red}P}}
\newcommand\Qgrp{{\color{red}Q}}
\newcommand\WrGZ[2]{#1\wr_{#2}\bZ}
\newcommand\WrGZZ[3]{#1\wr_{#2,#3}\bZ^2}
\newcommand\WrGHZ[4]{\left(#1,#2\right)\wr_{#3,#4}\bZ}
\newcommand\WrGZm[2]{#1\wr\bZ_{#2}}
\newcommand\WrGZZmn[3]{#1\wr(\bZ_{#2}\times\bZ_{#3})}
\newcommand\WrGHZm[4]{\left(#1,#2\right)\wr_{#3,#4}\bZ_{2#4}}
\newcommand\ahom{\varphi}
\newcommand\bhom{\psi}
\newcommand\fSing{\Sigma_{\func}}
\newcommand\KRGraphf{\Gamma_{\func}}
\newcommand\regU[1]{U_{#1}}
\newcommand\sgOrd{{\color{red}b}}   
\newcommand\cntOne{{\color{red}d}}  
\newcommand\cntTwo{{\color{red}e}}  
\newcommand\sg[1]{\widehat{#1}}
\newcommand\spd[1]{$#1$-special decomposition}
\newcommand\exd[9]{
 #1 \ar@{^(->}[r]\ar@{^(->}[d] & 
 #2 \ar@{^(->}[d]\ar@{->>}[r]  &
 #3 \ar@{^(->}[d] \\
 #4 \ar@{^(->}[r]\ar@{->>}[d] & 
 #5 \ar@{->>}[d]\ar@{->>}[r]  &
 #6 \ar@{->>}[d] \\
 #7 \ar@{^(->}[r] & 
 #8 \ar@{->>}[r]  &
 #9 
}
\newcommand\QQi[1]{\Qman_{#1}}
\newcommand\CCi[1]{\Cman_{#1}}
\newcommand\Amatri[1]{A_{#1}}
\newcommand\trace{\mathrm{tr}}
\newcommand\pdif{\mathbf{h}}
\newcommand\cell{e}
\newcommand\cplus[1]{c^{+}_{#1}}
\newcommand\cmin[1]{c^{-}_{#1}}
\newcommand\tcell{\widetilde{e}}
\newcommand\spdif{\widetilde{\pdif}}
\newcommand\PrjCWPart{\Xi}
\newcommand\SphCWPart{\widetilde{\Xi}}
\newcommand\kdif{k}
\newcommand\orig{\mathsf{0}}
\newcommand\CoRn{\Cinfty_{\orig}(\bR^{n})}
\newcommand\JidR[1]{J_{\bR}(#1)}
\newcommand\JidC[1]{J_{\bC}(#1)}
\newcommand\milnorNum[2]{\mu_{#1}(#2)}
\newcommand\bx{\mathbf{x}}
\newcommand\bz{\mathbf{z}}
\newcommand\holoCn{\mathcal{O}(\bC^n)}
\newcommand\maxId{\mathfrak{m}}
\newcommand\DDX[1]{\mathsf{D}^{#1}}
\newcommand\EEX[1]{\mathsf{E}^{#1}}
\newcommand\DDLast{\mathsf{D}^{\sgOrd-1}}
\newcommand\EELast{\mathsf{E}^{\mel-1}}
\title[Deformational symmetries of functions on non-orientable surfaces]
{Deformational symmetries of smooth functions on non-orientable surfaces}
\author{Iryna Kuznietsova}
\address{Department of Algebra and Topology, Institute of Mathematics of NAS of Ukraine, Teresh\-chenkivska str. 3, Kyiv, 01601, Ukraine}
\email{kuznietsova@imath.kiev.ua}
\author{Sergiy Maksymenko}
\address{Department of Algebra and Topology, Institute of Mathematics of NAS of Ukraine, Teresh\-chenkivska str. 3, Kyiv, 01601, Ukraine}
\curraddr{}
\email{maks@imath.kiev.ua}
\subjclass[2000]{Primary: 57S05, 57R45; Secondary: 37C05}
\keywords{Diffeomorphism, Morse function, M\"obius band, foliation}
\thanks{This work was supported by a grant from the Simons Foundation (1030291, 1290607, I.V.K. and S.I.M.)}
\begin{document}

\begin{abstract}Given a compact surface $M$, consider the natural right action of the group of diffeomorphisms $\mathcal{D}(M)$ of $M$ on $\mathcal{C}^{\infty}(M,\mathbb{R})$ defined by the rule: $(f,h)\mapsto f\circ h$ for $f\in \mathcal{C}^{\infty}(M,\mathbb{R})$ and $h\in\mathcal{D}(M)$. 
Denote by $\mathcal{F}(M)$ the subset of $\mathcal{C}^{\infty}(M,\mathbb{R})$ consisting of function $\func:M\to\mathbb{R}$ taking constant values on connected components of $\partial{M}$, having no critical points on $\partial{M}$, and such that at each of its critical points $z$ the function $f$ is $\mathcal{C}^{\infty}$ equivalent to some homogenenous polynomial without multiple factors.
In particular, $\mathcal{F}(M)$ contains all Morse maps.
Let also 
$\mathcal{O}(f) = \{ f\circ h \mid h\in\mathcal{D}(M) \}$ be the orbit of $f$.
Previously it was computed the algebraic structure of $\pi_1\mathcal{O}(f)$ for all $f\in\mathcal{F}(M)$, where $M$ is any orientable compact surface distinct from $2$-sphere.

\quad 
In the present paper we compute the group $\pi_0\mathcal{S}(f,\partial\mathbb{M})$, where $\mathbb{M}$ is a M\"obius band, and $\mathcal{S}(f,\partial\mathbb{M}) = \{ h\in\mathcal{D}(\mathbb{M}) \mid f\circ h = f, \ h|_{\partial \mathbb{M}} = \mathrm{id}_{\mathbb{M}}\}$ is the subgroup of the corresponding stabilizer of $f$ consisting of diffeomorphisms fixed on the boundary $\partial \mathbb{M}$.
As a consequence we obtain an explicit algebraic description of $\pi_1\mathcal{O}(f)$ for all non-orientable surfaces distinct from Klein bottle and projective plane.
\end{abstract}
\maketitle


\section{Introduction}\label{sect:intro}
The present paper continues a series of works by many authors~\cite{
    Maksymenko:AGAG:2006,
    Maksymenko:TrMath:2008,
    Maksymenko:UMZ:ENG:2012,
    MaksymenkoFeshchenko:UMZ:ENG:2014,
    MaksymenkoFeshchenko:MS:2015,
    MaksymenkoFeshchenko:MFAT:2015,
    Maksymenko:TA:2020,
    Feshchenko:Zb:2015,
    Kudryavtseva:ConComp:VMU:2012, Kudryavtseva:SpecMF:VMU:2012, Kudryavtseva:MathNotes:2012, Kudryavtseva:MatSb:2013}
and others devoted to the study of the natural right action
\[
    \Ci{\Mman}{\Pman} \times \Diff(\Mman) \to  \Ci{\Mman}{\Pman},
    \qquad
    (\func,\dif) \mapsto \func\circ\dif,
\]
of the diffeomorphism group $\Diff(\Mman)$ of a compact surface $\Mman$ on the space $\Ci{\Mman}{\Pman}$ of smooth maps $\func:\Mman\to\Pman$, where $\Pman$ is either the real line $\bR$ or the circle $\Circle$.

For a closed subset $\Xman$ denote by $\Diff(\Mman,\Xman)$ the subgroup of $\Diff(\Mman)$ consisting of diffeomorphisms fixed on $\Xman$, and for every $\func\in\Ci{\Mman}{\Pman}$ let
\begin{align*}
    &\Stabilizer{\func,\Xman} = \{ \dif\in\Diff(\Mman,\Xman) \mid \func\circ\dif = \func \},    &
    &\Orbit{\func,\Xman} = \{ \func\circ\dif \mid  \dif\in\Diff(\Mman,\Xman) \},
\end{align*}
be respectively the \term{stabilizer} and the \term{orbit} of $\func\in\Ci{\Mman}{\Pman}$ with respect to that action. If $\Xman=\varnothing$, then we will omit it from notation.
It will be convenient to say that the diffeomorphisms from $\Stabilizer{\func}$ \term{preserve $\func$}.

Endow the spaces $\Diff(\Mman,\Xman)$ and $\Ci{\Mman}{\Pman}$ with Whitney $C^\infty$-topologies, and their subspaces $\mathcal{S}(f,\Xman)$, $\mathcal{O}(f,\Xman)$ with the induced ones.
Let also
\begin{itemize}
\item $\DiffId(\Mman,\Xman)$ be the identity path component of $\Diff(\Mman,\Xman)$ consisting of diffeomorphisms isotopic to $\id_{\Mman}$ rel.\ $\Xman$,
\item $\StabilizerId{\func,\Xman}$ be the identity path component of the stabilizer $\Stabilizer{\func,\Xman}$, and
\item $\OrbitComp{\func}{\func,\Xman}$ be the path component of the orbit $\Orbit{\func,\Xman}$ containing $\func$.
\end{itemize}
Finally, denote by
\[
    \StabilizerIsotId{\func,\Xman} := \Stabilizer{\func} \cap \DiffId(\Mman,\Xman)
\]
the subgroup of $\Stabilizer{\func,\Xman}$ consisting of $\func$-preserving diffeomorphisms isotopic to $\id_{\Mman}$ rel.\ $\Xman$, but such isotopies are not required to preserve $\func$.
If $\Xman=\varnothing$, then we will omit it from notation.

Note that if $(\Mman,\Xman)$ is either a $(D^2,\partial D^2)$ or $(\Circle\times[0;1], \Circle\times0)$ or $(\MBand,\partial\MBand)$, where $\MBand$ is a M\"obius band, then $\Diff(\Mman,\Xman)$ is well known to be contractible (and in particular path connected), whence $\Diff(\Mman,\Xman)=\DiffId(\Mman,\Xman)$ and therefore in those cases for every $\func\in\Ci{\Mman}{\Pman}$ we have that
\begin{equation}\label{equ:STf_STprf_disk_cyl_mb}
\StabilizerIsotId{\func,\Xman} := 
    \Stabilizer{\func} \cap \DiffId(\Mman,\Xman) = 
    \Stabilizer{\func} \cap \Diff(\Mman,\Xman)   = 
    \Stabilizer{\func,\Xman}.
\end{equation}
We will study the homotopy types of stabilizers and orbits for maps $\func:\Mman\to\Pman$ belonging to the following subset $\FSp{\Mman}\subset\Ci{\Mman}{\Pman}$.
\begin{definition}\label{def:classF}
Denote by $\Cid{\Mman}{\Pman}$ the subset of $\Ci{\Mman}{\Pman}$ consisting of maps $\func$ such that
\begin{enumerate}[label={\rm(B)}, leftmargin=*]
\item\label{axiom:Bd}
$\func$ takes constant values on connected components of $\partial\Mman$ and has no critical points therein.
\end{enumerate}
Furthermore, let $\FSp{\Mman}$ be the subset of $\Cid{\Mman}{\Pman}$ consisting of maps $\func$ such that
\begin{enumerate}[label={\rm(H)}, leftmargin=*]
\item\label{axiom:Hm}
for every critical point $z$ of $\func$ the germ of $\func$ at $z$ is $\Cinfty$ equivalent to some non-zero homogeneous polynomial $\gfunc_z\colon \bR^2 \to \bR$  without multiple factors.
\end{enumerate}
\end{definition}
The space $\FSp{\Mman}$ is rather ``massive'' and (due to Morse lemma and axiom~\ref{axiom:Hm}) contains all maps with non-degenerate critical points.
Moreover, axiom~\ref{axiom:Hm} (namely, absence of multiple factors) implies that for each $\func\in\FSp{\Mman}$ its critical points are isolated, and this further implies that $\FSp{\Mman}$ also contains ``up to a homeomorphism'' every $\func\in\Cid{\Mman}{\Pman}$ with isolated critical points, see Remark~\ref{rem:isol_sing}.

Our main result explicitly computes the fundamental group $\pi_1\Orbit{\func}$ for every $\func\in\FSp{\MBand}$ on the M\"obius band $\MBand$, (Theorem~\ref{th:class:Mobius_band}).
As a consequence this allows to \term{explicitly} describe the homotopy types of path components $\OrbitComp{\func}{\func}$ of  orbits for \term{all} maps $\func\in\FSp{\Mman}$ on \term{all} non-orientable surfaces $\Mman$ distinct from Klein bottle and projective plane, (Theorem~\ref{th:class:nonoriented_case}).
For orientable surfaces distinct from $2$-torus and $2$-sphere such a description was done in S.~Maksymenko~\cite{Maksymenko:TA:2020}, while the case of $2$-torus was studied in a series of papers by S.~Maksymenko and B.~Feshchenko~\cite{MaksymenkoFeshchenko:UMZ:ENG:2014, MaksymenkoFeshchenko:MS:2015, MaksymenkoFeshchenko:MFAT:2015, Feshchenko:Zb:2015, Feshchenko:PIGC:2019, Feshchenko:PIGC:2021}.

\subsection{Homotopy type of orbits}
For $k\geq0$ let $T^k = (\Circle)^k$ be the \term{$k$-torus}, i.e.\ the product of $k$ circles.
Say that a map $\func\in\FSp{\Mman}$ is \term{generic Morse}, if all critical points of $\func$ are non-degenerate, and $\func$ takes distinct values at distinct critical points.
By~\cite[Theorem~1.5]{Maksymenko:AGAG:2006} for every \term{generic} Morse map $\func:\Mman\to\Pman$ (except for few types of such maps) we have the following homotopy equivalences
\[
    \OrbitComp{\func}{\func} \simeq
    \begin{cases}
        T^k\times SO(3), & \text{if $\Mman=S^2$ or $\PrjPlane$}, \\
        T^k,             & \text{otherwise.}
    \end{cases}
\]
Moreover, the number $k$ depends on $\func$ and is expressed via the numbers of critical points of $\func$ of distinct indices.
See also~\cite[Theorem~5.3.1]{Maksymenko:ProcIMNANU:2020} for an explicit construction of the inclusion $T^k\subset\OrbitComp{\func}{\func}$ being a homotopy equivalence in the second case.

The above result was extended by E.~Kudryavtseva~\cite{Kudryavtseva:ConComp:VMU:2012, Kudryavtseva:SpecMF:VMU:2012, Kudryavtseva:MathNotes:2012, Kudryavtseva:MatSb:2013} to arbitrary Morse functions $\func:\Mman\to\bR$ on orientable surfaces, and in~\cite{Kudryavtseva:DAN:2016} to functions with singularities of type $A_{\mu}$.
She proved that for every such $\func$ (again except for several specific cases) there exists a free action of a certain finite group $G_{\func}$ on a $k$-torus $T^k$ for some $k\geq0$ such that we have the following homotopy equivalences:
\[
    \OrbitComp{\func}{\func} \simeq
    \begin{cases}
        (T^k/G_{\func})\times SO(3), & \text{if $\Mman=S^2$ or $\PrjPlane$}, \\
        T^k/G_{\func},             & \text{otherwise.}
    \end{cases}
\]

That group $G_{\func}$ is exactly the group $\GrpKRIsotId{\func,\varnothing}:=\StabilizerIsotId{\func,\varnothing}/\FolStabilizerIsotId{\func,\varnothing}$ defined below by~\eqref{equ:Delta_G}.
For Morse maps it can also be regarded as the group of automorphisms of the Kronrod-Reeb graph of $\func$ induced by diffeomorphisms from $\StabilizerIsotId{\func}$.
It is easy to show that $G_{\func}$ is trivial for generic Morse map.
However, the algebraic structure of $\pi_1\OrbitComp{\func}{\func}$ and $G_{\func}$ was not clear.
S.~Maksymenko~\cite{Maksymenko:TA:2020} explicitly computed $\pi_1\OrbitComp{\func}{\func}$ and $G_{\func}$ for all $\func\in\FSp{\Mman}$, where $\Mman$ is an orientable surface distinct from $2$-torus and $2$-sphere,
S.~Maksymenko and B.~Feshchenko~\cite{MaksymenkoFeshchenko:UMZ:ENG:2014, MaksymenkoFeshchenko:MS:2015, MaksymenkoFeshchenko:MFAT:2015, Feshchenko:Zb:2015, Feshchenko:PIGC:2019, Feshchenko:PIGC:2021} completely described the case when $\Mman$ is a $2$-torus, and also in~\cite{Maksymenko:WreathProd:2022} explicit models for the actions of $G_{\func}$ on $T^k$ are given.
In the present paper those computations will be extended to all non-orientable surfaces distinct from the Klein bottle and projective plane.

Our starting point is the following statement collecting several particular results from the mentioned above papers.
\begin{theorem}\label{th:prelim_statements}
Let $\Mman$ be a connected compact surface, $\Yman$ be a possibly empty collection of boundary components of $\Mman$, and $\func\in\FSp{\Mman}$.
\begin{enumerate}[wide, label={\rm(\alph*)}]
\item\label{enum:th:prelim_statements:Of_OfY}
Then $\OrbitComp{\func}{\func,\Yman} = \OrbitComp{\func}{\func}$, i.e.\ the path components of $\func$ in those orbits coincide as sets, though it is not true in general that $\Orbit{\func,\Yman} = \Orbit{\func}$.

\item
Suppose that either $\Yman\not=\varnothing$, or $\Yman=\varnothing$ but the Euler characteristic $\chi(\Mman) < 0$.
Then
\begin{enumerate}[wide, label={\rm(b\arabic*)}]
\item\label{enum:th:prelim_statements:Orbit_aspherical}
$\DiffId(\Mman,\partial\Yman)$ is contractible, $\OrbitComp{\func}{\func,\Yman}$ is \term{aspherical}, and we have an \term{isomorphism}
\begin{equation}\label{equ:j_piOf_pi0Stprf}
\partial:\pi_1\OrbitComp{\func}{\func,\Yman} \to \pi_0\StabilizerIsotId{\func,\Yman};
\end{equation}

\item\label{enum:th:prelim_statements:Sprf_product}
there exists a compact subsurface $\Xman\subset\Mman$ such that $\overline{\Mman\setminus\Xman}$ is a disjoint union of compact subsurfaces $\Lman_1,\ldots,\Lman_{p}$, where each $\Lman_{i}$ is either a $2$-disk or a cylinder or a M\"obius band, $\func|_{\Lman_{i}} \in \FSp{\Lman_{i}}$, and the inclusions
\begin{equation}\label{equ:stab_inclusions}
    \xymatrix@C=7ex{
        \ \StabilizerIsotId{\func,\Yman} \ &
        \ \StabilizerIsotId{\func,\Yman\cup\Xman} \ \ar@{^(->}[r]^-{\alpha} \ar@{_(->}[l] &
        \ \prod\limits_{i=1}^{p}\StabilizerIsotId{\func|_{\Lman_{i}},\partial\Lman_{i}} \
    }
\end{equation}
are homotopy equivalences, where $\alpha(\dif) = (\dif|_{\Lman_{1}}, \ldots, \dif|_{\Lman_{p}})$.
\end{enumerate}
\end{enumerate}

In particular, we get isomorphisms:
\begin{equation}\label{equ:expt_pi1Of__pi0Sprf}
    \pi_1\OrbitComp{\func}{\func}       \ \cong \
    \pi_1\OrbitComp{\func}{\func,\Yman} \ \cong \
    \pi_0\StabilizerIsotId{\func,\Yman} \ \cong \
    \prod\limits_{i=1}^{p}\pi_0\StabilizerIsotId{\func|_{\Lman_i},\partial\Lman_i}.
\end{equation}
\end{theorem}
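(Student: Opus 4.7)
The plan is to synthesize Theorem~\ref{th:prelim_statements} from results scattered across~\cite{Maksymenko:AGAG:2006, Maksymenko:TA:2020} and their companion papers; the proof is essentially citation and bookkeeping rather than genuinely new argument, since each of the three statements is established in those works (or follows as an immediate variant).

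For part~\ref{enum:th:prelim_statements:Of_OfY}, the inclusion $\OrbitComp{\func}{\func,\Yman}\subseteq\OrbitComp{\func}{\func}$ is immediate from $\Diff(\Mman,\Yman)\subseteq\Diff(\Mman)$. For the reverse inclusion, I would argue that given a continuous path $\gfunc_{t}=\func\circ\dif_{t}$ in $\Orbit{\func}$ starting at $\func$, axiom~\ref{axiom:Bd} forces each $\gfunc_{t}$ to take constant values on every boundary component and to have no boundary critical points, so $\dif_{t}$ must permute the components of $\partial\Mman$ preserving their $\func$-values. One then absorbs the motion of $\dif_{t}$ along the components in $\Yman$ by post-composition with an isotopy supported in a collar of $\Yman$ and lying inside the stabilizer, producing a lifted path by elements of $\Diff(\Mman,\Yman)$ realizing the same path of functions.

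For part~\ref{enum:th:prelim_statements:Orbit_aspherical}, I would invoke the classical contractibility of $\DiffId(\Mman,\partial\Yman)$ under the stated hypotheses (Earle--Eells, Earle--Schatz, Gramain). The main homotopical tool is the Serre fibration
\begin{equation*}
p\colon \DiffId(\Mman,\Yman)\to\OrbitComp{\func}{\func,\Yman},\qquad \dif\mapsto\func\circ\dif,
\end{equation*}
with fibre $\StabilizerIsotId{\func,\Yman}$; its long exact sequence together with the contractibility of $\DiffId(\Mman,\Yman)$ immediately yields~\eqref{equ:j_piOf_pi0Stprf} and reduces asphericity of $\OrbitComp{\func}{\func,\Yman}$ to homotopical discreteness of the stabilizer.

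That discreteness, jointly with~\ref{enum:th:prelim_statements:Sprf_product}, would be established by the following sequence of steps. First, I would construct $\Xman$ as a closed regular neighbourhood of the union of the singular set of $\func$ with a chosen collection of critical level components, arranged so that each complementary piece $\Lman_{i}$ is a disk, cylinder, or Möbius band on which the restriction satisfies $\func|_{\Lman_{i}}\in\FSp{\Lman_{i}}$. The left arrow in~\eqref{equ:stab_inclusions} will be a homotopy equivalence by an isotopy-extension argument: diffeomorphisms in $\StabilizerIsotId{\func,\Yman}$ can be deformed within the stabilizer to ones also fixing $\Xman$, because the relevant space of isotopies is contractible. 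The right arrow $\alpha$ will be a homotopy equivalence because restriction to the pieces $\Lman_{i}$ is a split surjection with contractible fibres via~\eqref{equ:STf_STprf_disk_cyl_mb}. Each factor $\StabilizerIsotId{\func|_{\Lman_{i}},\partial\Lman_{i}}$ is known to be homotopically discrete by the model computations on canonical pieces in~\cite{Maksymenko:AGAG:2006}, which completes the asphericity argument; then~\eqref{equ:expt_pi1Of__pi0Sprf} follows by applying $\pi_{0}$ to~\eqref{equ:stab_inclusions} and chaining with~\eqref{equ:j_piOf_pi0Stprf}. The main technical obstacle will be ensuring compatibility of the fibration in~\ref{enum:th:prelim_statements:Orbit_aspherical} with the decomposition in~\ref{enum:th:prelim_statements:Sprf_product}, but this is already settled by the same isotopy extension lemma that powers the left arrow in~\eqref{equ:stab_inclusions}.
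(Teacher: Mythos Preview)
Your proposal is essentially correct and follows the same route as the paper's own ``Remarks to the proof'': the paper simply cites~\cite[Corollary~2.1]{Maksymenko:UMZ:ENG:2012} for~(a), \cite[Theorem~1.7]{Maksymenko:MFAT:2010} for~(b2), and for~(b1) invokes the contractibility of $\DiffId(\Mman,\Yman)$ (Earle--Eells, Earle--Schatz, Gramain) together with the long exact sequence of the fibration $p:\DiffId(\Mman,\Yman)\to\OrbitComp{\func}{\func,\Yman}$, exactly as you do.

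One point to make explicit: you assert that $p$ is a Serre fibration without justification. In the paper this is not automatic; it is the content of Theorem~\ref{th:maps_of_fin_codim}\ref{enum:lm:maps_of_fin_codim:DIdY} (the Sergeraert-type result), which applies because every $\func\in\FSp{\Mman}$ has finite codimension (Lemma~\ref{lm:axH_fin_codim}). This is the one genuinely nontrivial input you should cite rather than treat as folklore.
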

\begin{proof}[Remarks to the proof]
Statement~\ref{enum:th:prelim_statements:Of_OfY} proved in~\cite[Corollary~2.1]{Maksymenko:UMZ:ENG:2012}, while statement~\ref{enum:th:prelim_statements:Sprf_product} is established in~\cite[Theorem~1.7]{Maksymenko:MFAT:2010} (in fact, the inclusions~\eqref{equ:stab_inclusions} even yield isomorphisms of the corresponding \term{Bieberbach sequences}, see~\eqref{equ:iso_bbb} below for details).

Contractibility of $\DiffId(\Mman,\Yman)$ with $\Yman\not=\varnothing$ is well-known, e.g.~\cite{EarleEells:JGD:1969,EarleSchatz:DG:1970, Gramain:ASENS:1973}.
Further, by statement~\ref{enum:lm:maps_of_fin_codim:DIdY} of Theorem~\ref{th:maps_of_fin_codim} below, the map $p:\DiffId(\Mman,\Yman)\to\OrbitComp{\func}{\func,\Yman}$, $p(\dif)=\func\circ\dif$, is a locally trivial fibration with fiber $\StabilizerIsotId{\func,\Yman}$.
Hence, we get the following part of the short exact sequence of that fibration:
\[
   \pi_1\DiffId(\Mman,\Yman) \to
   \pi_1\OrbitComp{\func}{\func,\Yman} \xrightarrow{~\partial~}
   \pi_0 \StabilizerIsotId{\func,\Yman} \to
   \pi_0\DiffId(\Mman,\Yman).
\]
Since $\DiffId(\Mman,\Yman)$ is contractible, the first and last terms are zero, which implies that $\partial$ is an isomorphism.
\end{proof}

Thus, by~\ref{enum:th:prelim_statements:Orbit_aspherical}, the group $\pi_0\StabilizerIsotId{\func,\Yman}$ completely determines the homotopy type of $\OrbitComp{\func}{\func}$.
Moreover, due to~\ref{enum:th:prelim_statements:Sprf_product}, the computation of $\pi_0\StabilizerIsotId{\func,\Yman}$ reduces to the computation of groups $\pi_0\StabilizerIsotId{\gfunc,\partial\Lman}$ for functions $\gfunc\in\FSp{\Lman}$, where $\Lman$ is either a $2$-disk or a cylinder or a M\"obius band.

The structure of such groups for disks and cylinders is described in~\cite{Maksymenko:TA:2020}.
Moreover, it is shown in~\cite[Theorem~1.4]{MaksymenkoKuznietsova:PIGC:2019}, see Lemma~\ref{lm:unique_cr_level} below, that every map $\func\in\FSp{\MBand}$ allows to additionally decompose the M\"obius band $\MBand$ into one cylinder $\Yman_0$, several $2$-disks $\Yman_1,\ldots,\Yman_k$, and a certain subsurface $\Xman$.
Our main result (Theorem~\ref{th:class:Mobius_band}) shows that $\pi_1\Orbit{\func}$ can be expressed via the groups $\pi_1\Orbit{\func|_{\Yman_i}}$ in a certain explicit algebraic way.

\section{Main result}
In what follows $\UnitGroup$ denotes the unit group, and the symbols $\monoArrow$ and $\epiArrow$ denote respectively \term{monomorphism} and \term{epimorphism}.
In particular, $\Agrp\monoArrow\Bgrp\epiArrow\Cgrp$ is a short exact sequence of groups.

First we will need to introduce several types of semidirect products corresponding to certain non-effective actions of the group $\bZ$.
All of them are particular cases of the following construction.

Let $\Ggrp$ be a group with unit $e$ and $\ahom\in\Aut(\Ggrp)$ be any automorphism of $\Ggrp$.
Then one can define the homomorphism $\widehat{\ahom}:\bZ\to\Aut(\Ggrp)$, $\widehat{\ahom}(t) = \ahom^{t}$, $t\in\bZ$.
Hence, we also have the corresponding semidirect product $\smd{\Ggrp}{\ahom}{\bZ}$ which, by definition, is the Cartesian product $\Ggrp\times\bZ$ of sets with the following multiplication:
\begin{equation}\label{equ:G_rtimes_Z:multiplication}
    (\ael,k)(\bel,l) = (\ael\, \ahom^{k}(\bel), k+l)
\end{equation}
for all $\ael,\bel\in\Ggrp$ and $k,l\in\bZ$.

Furthermore, if $\ahom$ is periodic of some order $\mel\geq1$, then $\widehat{\ahom}$ reduces to a monomorphism $\overline{\ahom}:\bZ_{\mel} \to \Aut(\Ggrp)$, $\overline{\ahom}(t) = \ahom^{t}$, $t\in\bZ_{\mel}$, and we also have the semidirect product $\smd{\Ggrp}{\ahom}{\bZ_{\mel}}$, which again by definition, is a Cartesian product $\Ggrp\times\bZ_{\mel}$ with the multiplication given by same formula~\eqref{equ:G_rtimes_Z:multiplication}, but now $k,l$ should be taken modulo $\mel$.

Evidently, there is the following short exact sequence
\[
    \{e\} \times \mel\bZ     \xmonoArrow{~~~~~~~~~~}
    \smd{\Ggrp}{\ahom}{\bZ}  \xepiArrow{~(\gel,\,k) \,\mapsto \,(\gel, \, k\bmod\mel)~}
    \smd{\Ggrp}{\ahom}{\bZ_{\mel}}.
\]

\subsection{Groups $\WrGZ{\Ggrp}{\mel}$ and $\WrGZm{\Ggrp}{\mel}$}\label{sect:grp:GwrmZ}
Given $\mel\in\bN$ define the following automorphism cyclically shifting coordinates:  
\[ 
    \ahom:\Ggrp^{\mel} \to \Ggrp^{\mel}, 
    \qquad
    \ahom(\aeli{0},\ldots,\aeli{\mel-1}) = (\aeli{1}, \aeli{2},\ldots,\aeli{\mel-1}, \aeli{0})
\]
for all $(\aeli0,\ldots,\aeli{\mel-1}) \in \Ggrp^{\mel}$.
Let also
\begin{align*}
    \WrGZ{\Ggrp}{\mel}  &:= \smd{\Ggrp^{\mel}}{\ahom}{\bZ}, &
    \WrGZm{\Ggrp}{\mel} &:= \smd{\Ggrp^{\mel}}{\ahom}{\bZ_{\mel}}
\end{align*}
to be the corresponding semidirect products.
Thus, $\WrGZ{\Ggrp}{\mel}$ is a Cartesian product $\Ggrp^{\mel} \times \bZ$ with the following multiplication:
\[
    (\aeli0,\ldots,\aeli{\mel-1}; k) \cdot
    (\beli{0},\ldots,\beli{\mel-1}; l) =
    (\aeli0 \beli{k}, \aeli1 \beli{k+1}, \ldots, \aeli{\mel-1} \beli{k-1}; k+l),
\]
where the indices are taken modulo $\mel$, $\aeli{i},\beli{i}\in\Ggrp$, $k,l\in\bZ$.
On the other hand, $\WrGZm{\Ggrp}{\mel}$ is the set $\Ggrp^{\mel} \times \bZ_{\mel}$ but with the multiplication given by the same formulas, but now $k,l\in\bZ_{\mel}$.

Evidently, $\WrGZm{\Ggrp}{\mel}$ is the standard wreath product of groups $\Ggrp$ and $\bZ_{\mel}$, while $\WrGZ{\Ggrp}{\mel}$ is also the wreath product of $\Ggrp$ and $\bZ$ with respect to the \term{non-effective} action of $\bZ$ on the set $\{0,1,\ldots,\mel-1\}$ by cyclic shifts to the left.

\subsection{Groups $\WrGZZ{\Ggrp}{\mel}{\nel}$ and $\WrGZZmn{\Ggrp}{\mel}{\nel}$}\label{sect:grp:GwrmnZ}
More generally, let $\mel,\nel\in\bN$ be two numbers.
Then the elements of the $\mel\nel$-power $\Ggrp^{\mel\nel}$ of $\Ggrp$ can be regarded as $(\mel\times\nel)$-matrices $\{\gel_{i,j}\}_{i=0,\ldots\mel-1,\, j=0,\ldots,\nel-1}$ whose entries are elements of $\Ggrp$.
Then there is a natural non-effective action of $\bZ^2$ on $\Ggrp^{\mel\nel}$ by cyclic shifts of rows and columns, i.e.
\[
    (k,l) \cdot \{\gel_{i,j}\} = \{\gel_{i+k \bmod \mel, \, j+l \bmod \nel}\}.
\]
This action reduces to an effective action of $\bZ_{\mel}\times\bZ_{\nel}$.
Let
\begin{align*}
    &\WrGZZ{\Ggrp}{\mel}{\nel}   := \smd{\Ggrp^{\mel\nel}}{}{\bZ^2},
    &\WrGZZmn{\Ggrp}{\mel}{\nel} := \smd{\Ggrp^{\mel\nel}}{}{(\bZ_{\mel}\times\bZ_{\nel})}
\end{align*}
be the corresponding semidirect products.
Then $\WrGZZmn{\Ggrp}{\mel}{\nel} $ is the standard wreath product of groups $\Ggrp$ and $\bZ_{\mel}\times\bZ_{\nel}$, while $\WrGZZ{\Ggrp}{\mel}{\nel}$ is also the wreath product of $\Ggrp$ and $\bZ^2$ with respect to the action of $\bZ^2$ on the set $\{0,1,\ldots,\mel-1\}\times\{0,1,\ldots,\nel-1\}$ by independent cyclic shifts of coordinates.

\subsection{Groups $\WrGHZ{\Ggrp}{\Hgrp}{\gamma}{\mel}$ and $\WrGHZm{\Ggrp}{\Hgrp}{\gamma}{\mel}$}\label{sect:grp:GHwr_gamma_m_Z}
Let $\Hgrp$ be another group, $\mel\in\bN$, and $\gamma\colon \Hgrp\to \Hgrp$ be an automorphism such that $\gamma^{2}=\id_{\Hgrp}$, so $\gamma$ is either the identity automorphism or has order $2$.
Define the automorphism $\bhom\colon \Ggrp^{2\mel}\times\Hgrp^{\mel}\to \Ggrp^{2\mel}\times\Hgrp^{\mel}$ by the following formula
\begin{equation}\label{equ:Aut_in_WrGH_gamma_m}
    \bhom(\aeli0,\ldots,\aeli{2\mel-1}; \, \beli{0},\ldots, \beli{\mel-1})
    =
    (\aeli{1},\ldots, \aeli{2\mel-1}, \aeli{0}; \,  \beli{1},\ldots, \beli{\mel-1}, \gamma(\beli{0})).
\end{equation}
Evidently, $\bhom$ is of order $2\mel$.
Let
\begin{align*}
    &\WrGHZ{\Ggrp}{\Hgrp}{\gamma}{\mel} := \smd{(\Ggrp^{2\mel}\times\Hgrp^{\mel})}{\bhom}{\bZ}, \\
    &\WrGHZm{\Ggrp}{\Hgrp}{\gamma}{\mel} := \smd{(\Ggrp^{2\mel}\times\Hgrp^{\mel})}{\bhom}{\bZ_{2\mel}},
\end{align*}
be the corresponding semidirect products associated with $\bhom$.
Thus, $\WrGHZ{\Ggrp}{\Hgrp}{\gamma}{\mel}$ is the Cartesian product $\Ggrp^{2\mel}\times\Hgrp^{\mel}\times\bZ$ with the following operation.
Let $v = (\aeli{0},\ldots,\aeli{2\mel-1}; \,  \beli{0},\ldots,\beli{\mel-1};  k)$ and
$w= (\celi{0},\ldots,\celi{2\mel-1}; \,  \deli{0},\ldots,\deli{\mel-1};  l) \in \Ggrp^{2\mel}\times\Hgrp^{\mel}\times\bZ$.
Denote $k' = k\bmod 2\mel$.
Then
{\small\begin{equation}\label{equ:mult_in_GHZ}
v\,w \!=\!
\begin{cases}
    \bigl(
        \aeli{0}\celi{k}, \aeli{1}\celi{k+1}, ..., \aeli{2\mel-1}\celi{k-1}; \\
    \quad \beli{0}\deli{k}, \beli{1}\deli{k+1},   ..., \beli{\mel-k-1}\deli{\mel-1},
        \beli{\mel-k}\gamma(\deli{0}),        ..., \beli{\mel-1}\gamma(\deli{k-1}); 
        k\!+\!l
    \bigr), \\
    \hfill  0\leq k' < \mel, \\[1mm]
    \bigl(
        \aeli{0}\celi{k}, \aeli{1}\celi{k+1}, ..., \aeli{2\mel-1}\celi{k-1}; \\
    \quad \beli{0}\gamma(\deli{k}), \beli{1}\gamma(\deli{k+1}), ..., \beli{\mel-k-1}\gamma(\deli{\mel-1}),
        \beli{\mel-k} \deli{0},        ..., \beli{\mel-1} \deli{k-1}; 
        k\!+\!l
    \bigr), \\
    \hfill \mel\leq k' < 2\mel,
\end{cases}
\end{equation}}%
where the indices of $\aeli{*}$ and $\celi{*}$ are taken modulo $2\mel$, while the indices of $\beli{*}$ and $\deli{*}$ are taken modulo $\mel$.

Again the multiplication in $\WrGHZm{\Ggrp}{\Hgrp}{\gamma}{\mel}$ is given by the same formulas~\eqref{equ:mult_in_GHZ}, but one should take $k,l\in\bZ_{2\mel}$.

Evidently, for every $\mel\in\bN$ we have the following isomorphisms:
\begin{align*}
     \WrGZ{\UnitGroup}{1} &\cong \bZ, &
     \WrGZ{\Ggrp}{1}      &\cong \Ggrp\times\bZ, &
      \WrGZZ{\Ggrp}{1}{1} &\cong \Ggrp\times\bZ^2, \\
     \WrGZ{\Ggrp}{2\mel}  &\cong \WrGHZ{\Ggrp}{\UnitGroup}{\id_{\UnitGroup}}{\mel}, &
     \WrGZZ{\Ggrp}{\mel}{1} & \cong (\WrGZ{\Ggrp}{\mel}) \times\bZ.
\end{align*}

\begin{definition}\label{def:classG}
Let $\classGroups$ be the minimal class of groups (considered up to isomorphism) satisfying the following conditions:
\begin{enumerate}[label={\rm\arabic*)}]
\item\label{enum:def:classG:1} $\UnitGroup\in \classGroups$;
\item\label{enum:def:classG:x} if $\Ggrp,\Hgrp \in \classGroups$, then $\Ggrp\times\Hgrp \in \classGroups$;
\item\label{enum:def:classG:wr} if $\Ggrp\in \classGroups$ and $\mel\geq 1$, then $\WrGZ{\Ggrp}{\mel}\in\classGroups$.
\end{enumerate}
\end{definition}
It is easy to see, \cite[Lemma~2.6]{Maksymenko:TA:2020}, that $\classGroups$ consists of groups $\Ggrp$ which can be obtained from the unit group $\UnitGroup$ by finitely many operations of direct products $\times$ and wreath products $\WrGZ{\cdot}{\bullet}$.
For example, the following groups belong to $\classGroups$:
\begin{align*}
  &\bZ = \WrGZ{\UnitGroup}{1}, &
  &\bZ^n, (n\geq1), &
  &\WrGZ{ \bigl((\WrGZ{\bZ^3}{5}) \times (\WrGZ{\bZ^{17}}{2})\bigr) }{11}.
\end{align*}
Notice that such an expression of a group $\Ggrp\in\classGroups$ is not unique, e.g.\ $\WrGZ{\UnitGroup}{1} \cong \WrGZ{(\UnitGroup\times\UnitGroup)}{1}$.

The following theorem collects some known information about the structure of $\pi_1\Orbit{\func}$ for maps $\func\in\FSp{\Mman}$ on orientable surfaces $\Mman$ distinct from $S^2$.

\begin{theorem}[{\rm\cite{Maksymenko:TA:2020, KuznietsovaSoroka:UMJ:2021, Feshchenko:PIGC:2019}}]
\label{th:class:oriented_case}
Let $\Mman$ be a compact orientable surface distinct from $2$-sphere and $\func\in\FSp{\Mman}$.
\begin{enumerate}[leftmargin=*, topsep=1ex, itemsep=1ex]
\item\label{enum:th:class:oriented_case:not_torus}
If $\Mman$ is also distinct from $2$-torus, then there exists $\Ggrp\in\classGroups$ such that
\begin{equation}\label{equ:pi1Of_notT2}
    \pi_1\Orbit{\func} \cong \Ggrp.
\end{equation}

\item\label{enum:th:class:oriented_case:torus}
Suppose $\Mman=T^2$ is a $2$-torus.
If the Kronrod-Reeb graph of $\func$ is a tree then there exist $\Ggrp\in\classGroups$ and $\mel,\nel\in\bN$ such that
\begin{equation}\label{equ:pi1Of_T2_tree}
    \pi_1\Orbit{\func} \cong \WrGZZ{\Ggrp}{\mel}{\nel}.
\end{equation}
Otherwise, the Kronrod-Reeb graph of $\func$ contains a unique cycle and there exist $\Ggrp\in\classGroups$ and $\mel\in\bN$ such that
\begin{equation}\label{equ:pi1Of_T2_cycle}
    \pi_1\Orbit{\func} \cong \WrGZ{\Ggrp}{\mel}.
\end{equation}
The latter holds e.g.\ if $\Pman=\Circle$ and $\func:T^2\to\Circle$ is not null homotopic.
\end{enumerate}
Conversely, for every group as in the r.h.s.\ of~\eqref{equ:pi1Of_notT2}-\eqref{equ:pi1Of_T2_cycle} there exists $\func\in\FSp{\Mman}$ on the corresponding surface $\Mman$ such that the corresponding isomorphism holds.
Moreover, one can also assume that $\func$ is Morse.
\end{theorem}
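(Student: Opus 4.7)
The plan is to reduce, via Theorem~\ref{th:prelim_statements}, to computing $\pi_0\StabilizerIsotId{\gfunc,\partial\Lman}$ for $\gfunc\in\FSp{\Lman}$ on ``elementary'' pieces $\Lman$ — here a disk or a cylinder, because $\Mman$ is orientable so no M\"obius band appears in the decomposition~\ref{enum:th:prelim_statements:Sprf_product} — and then to recognise these stabilizer groups as members of $\classGroups$ (or, for $T^{2}$, as one of the torus-specific wreath products). By the isomorphism chain~\eqref{equ:expt_pi1Of__pi0Sprf}, $\pi_1\Orbit{\func}$ becomes a finite direct product of such groups.

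For the non-torus case ($\Mman$ distinct from $S^{2}$ and $T^{2}$, hence $\chi(\Mman)<0$), I would pick $\Yman\neq\varnothing$ (or $\Yman=\varnothing$ with $\chi<0$) so that Theorem~\ref{th:prelim_statements}(b) applies. Each $\Lman_{i}$ is a disk or a cylinder, and for these pieces I would prove by induction on the number of critical points of $\gfunc=\func|_{\Lman_{i}}$ that $\pi_0\StabilizerIsotId{\gfunc,\partial\Lman_{i}}\in\classGroups$, following~\cite{Maksymenko:TA:2020}. The inductive step is to cut along a critical level set containing a saddle; the $\mel$ copies of the resulting simpler pieces are cyclically permuted by the symmetry of the critical component, contributing a factor $\WrGZ{\cdot}{\mel}$, while disconnected ``branches'' contribute factors assembled via direct product. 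Because $\classGroups$ is closed under $\times$ and $\WrGZ{\cdot}{\mel}$ by Definition~\ref{def:classG}, both the inductive step and the final product over $i=1,\ldots,p$ stay in $\classGroups$, giving~\eqref{equ:pi1Of_notT2}.

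For $\Mman=T^{2}$ the Euler characteristic vanishes, so one must work with a decomposition that retains a ``global'' piece carrying the torus topology rather than shrinking it away. If the Kronrod–Reeb graph $\Gf$ is a tree, then all regular level sets are separating, and one finds an invariant subsurface whose complement has two independent cyclic symmetries (one per generator of $\pi_1(T^{2})$), producing the outer factor $\WrGZZ{\cdot}{\mel}{\nel}$ in~\eqref{equ:pi1Of_T2_tree}. If $\Gf$ contains a cycle, some regular level set is a homologically non-trivial circle (this is automatic when $\Pman=\Circle$ and $\func$ is not null-homotopic, since then a regular fibre represents the non-zero class $\func_{*}[T^{2}]\in H_{1}(\Circle)$), and one obtains only one $\bZ$-action, giving $\WrGZ{\Ggrp}{\mel}$ as in~\eqref{equ:pi1Of_T2_cycle}; the inner $\Ggrp\in\classGroups$ comes from the same disk/cylinder analysis as above. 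These torus computations are carried out in Feshchenko~\cite{Feshchenko:PIGC:2019, Feshchenko:PIGC:2021} and~\cite{MaksymenkoFeshchenko:MFAT:2015}.

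For the converse I would build explicit Morse models inductively: the atomic case $\UnitGroup$ is a Morse function on a disk with a single critical point; a direct product $\Ggrp\times\Hgrp$ is realized by gluing two such functions along a regular level circle and adjusting values; a wreath product $\WrGZ{\Ggrp}{\mel}$ is realized on a disk (or cylinder) carrying a rotation symmetry of order $\mel$ that cyclically permutes $\mel$ congruent sub-functions. Analogous torus models, with one or two independent cyclic symmetries, realize~\eqref{equ:pi1Of_T2_tree} and~\eqref{equ:pi1Of_T2_cycle}; see~\cite{KuznietsovaSoroka:UMJ:2021}. The main obstacle is the torus case, where one must simultaneously control the combinatorics of $\Gf$ (tree versus cycle) and the action of $\pi_{1}$ on critical level sets in order to identify precisely which pair $(\mel,\nel)$ or which $\mel$ arises.
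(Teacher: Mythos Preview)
Your proposal is correct and aligns with the paper's own treatment. The paper does not give a self-contained proof here: under ``Remarks to the proof'' it simply cites \cite[Theorem~5.10]{Maksymenko:TA:2020} and \cite[Theorem~1.2]{KuznietsovaSoroka:UMJ:2021} for~\ref{enum:th:class:oriented_case:not_torus} and its converse, and \cite[Theorem~3.2]{Feshchenko:PIGC:2019} (with the other Maksymenko--Feshchenko papers) for~\ref{enum:th:class:oriented_case:torus}; your sketch is a faithful outline of how those cited arguments actually run, and you cite the same sources.
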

\begin{proof}[Remarks to the proof]
It is shown in~\cite[Theorem~5.10]{Maksymenko:TA:2020} that if $\Mman\not=T^2$ and $S^2$, then for each $\func\in\FSp{\Mman}$ it is possible to explicitly express $\pi_1\Orbit{\func}=\pi_0\Stabilizer{\func,\partial\Mman}$ in terms of operations $\times$ and $\WrGZ{\cdot}{\mel}$.
Conversely, it is proved in~\cite[Theorem~1.2]{KuznietsovaSoroka:UMJ:2021} that for any finite combination of such operations giving rise a group $\Ggrp$ one can construct some $\func\in\FSp{\Mman}$ with $\Ggrp \cong \pi_1\Orbit{\func}$.
In addition, if $\Yman_1,\ldots,\Yman_k$ is some collection of boundary components of $\Mman$, then (except for few cases) one can even assume that $\func$ takes a local minimum of each $\Yman_i$, and a local maximum on each boundary components from $\partial\Mman\setminus\cup_{i=1}^{k}\Yman_i$.

Similar considerations were done for the case of torus in~\cite{MaksymenkoFeshchenko:UMZ:ENG:2014, MaksymenkoFeshchenko:MS:2015, MaksymenkoFeshchenko:MFAT:2015, Feshchenko:Zb:2015, Feshchenko:PIGC:2019, Feshchenko:PIGC:2021}.
See especially~\cite[Theorem~3.2]{Feshchenko:PIGC:2019} for the general statement on the structure of $\pi_1\Orbit{\func}$ and $\pi_0\StabilizerIsotId{\func}$ and several other groups related with $\func\in\FSp{T^2}$.
\end{proof}

Our main result is the following theorem which will be proved in Section~\ref{sect:proof:th:class:Mobius_band}.
\begin{theorem}\label{th:class:Mobius_band}
Let $\MBand$ be the M\"obius band.
Then for every $\func\in\FSp{\MBand}$ either
\begin{itemize}[label=$- $, leftmargin=*]
\item there exist groups $\Agrp, \Ggrp, \Hgrp \in \classGroups$, an automorphism $\gamma:\Hgrp\to\Hgrp$ with $\gamma^{2}=\id_{\Hgrp}$, and $\mel\geq1$ such that
\begin{equation}\label{equ:pi0Sprf_A_GHZ__MBand}
    \pi_1\Orbit{\func} \ \cong \ \Agrp \ \times  \ \WrGHZ{\Ggrp}{\Hgrp}{\gamma}{\mel},
\end{equation}
\item
or there exist groups $\Agrp, \Ggrp \in \classGroups$ and an \term{odd} number $\sgOrd\geq1$ such that
\begin{equation}\label{equ:pi0Sprf_A_GZ__MBand_1}
    \pi_1\Orbit{\func} \ \cong \ \Agrp \ \times  \ \WrGZ{\Ggrp}{\sgOrd}.
\end{equation}
\end{itemize}
In particular, in the second case $\pi_1\Orbit{\func}\in\classGroups$.

Conversely, for every such tuple $(\Agrp, \Ggrp,\sgOrd)$ or $(\Agrp, \Ggrp,\Hgrp, \gamma, \mel)$ there exists $\func\in\FSp{\MBand}$ such that we have the corresponding isomorphism~\eqref{equ:pi0Sprf_A_GHZ__MBand} or~\eqref{equ:pi0Sprf_A_GZ__MBand_1}, and one can assume that $\func$ is Morse.
\end{theorem}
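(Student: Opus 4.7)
The plan is to combine Theorem~\ref{th:prelim_statements} with the special decomposition of the M\"obius band provided by Lemma~\ref{lm:unique_cr_level}. That lemma writes $\MBand=\Xman\cup\Yman_0\cup\Yman_1\cup\cdots\cup\Yman_k$, where $\Yman_0$ is a cylinder, each $\Yman_i$ with $i\geq 1$ is a $2$-disk, and $\Xman$ is a canonical subsurface of $\MBand$ carrying a distinguished critical level $K$ of $\func|_\Xman$. By Theorem~\ref{th:prelim_statements} and~\eqref{equ:expt_pi1Of__pi0Sprf},
\[
\pi_1\Orbit{\func} \;\cong\; \pi_0\StabilizerIsotId{\func|_{\Xman},\partial\Xman} \;\times\; \prod_{i=0}^k \pi_0\StabilizerIsotId{\func|_{\Yman_i},\partial\Yman_i}.
\]
Since each $\Yman_i$ is orientable, Theorem~\ref{th:class:oriented_case} implies that the second factor lies in $\classGroups$, and I take $\Agrp$ to be this product. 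It then suffices to show that $\pi_0\StabilizerIsotId{\func|_{\Xman},\partial\Xman}$ has exactly one of the two wreath-product forms displayed in~\eqref{equ:pi0Sprf_A_GHZ__MBand} or~\eqref{equ:pi0Sprf_A_GZ__MBand_1}.

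To analyse $\pi_0\StabilizerIsotId{\func|_{\Xman},\partial\Xman}$, I would study the action of such diffeomorphisms on the components of $\Xman\setminus K$. Any $\func$-preserving $\dif\in\Diff(\Xman,\partial\Xman)$ permutes these components; the ``rotational'' symmetry around the core of $\MBand$ organises them into cyclic orbits. Two cases arise. If the maximal symmetry has odd period $\sgOrd$, then no component can straddle the core (straddling would produce a forced orbit of even length), every orbit consists of $\sgOrd$ congruent orientable pieces, and the induced semidirect product is $\WrGZ{\Ggrp}{\sgOrd}$: the copy of $\bZ$ arises from a Dehn twist along $\partial\Xman$ combined with the $\sgOrd$-fold cyclic rotation, and $\Ggrp$ is $\pi_0\StabilizerIsotId{\func|_{\Qman},\partial\Qman}$ of a single component $\Qman$, lying in $\classGroups$ by Theorem~\ref{th:class:oriented_case}. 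If the period is even $2\mel$, a half-rotation reverses the orientation carried near the core, so $\Xman\setminus K$ splits into $2\mel$ ``generic'' components with stabilizer factor $\Ggrp$ and $\mel$ ``self-paired'' components straddling the core with stabilizer factor $\Hgrp$. The half-rotation transports each self-paired component to itself via an involution $\gamma$ of $\Hgrp$ coming from the orientation-reversing holonomy around the core; the resulting multiplication is precisely~\eqref{equ:mult_in_GHZ}, giving $\WrGHZ{\Ggrp}{\Hgrp}{\gamma}{\mel}$.

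The main obstacle is to verify the semidirect-product structure at the level of $\pi_0$: one must exhibit an infinite-order element (a boundary Dehn twist of $\Xman$) together with a finite-order lift of the rotation so that together they generate the $\bZ$-factor; one must check that conjugation realises exactly the shift automorphism $\bhom$ of Section~\ref{sect:grp:GHwr_gamma_m_Z}; and one must show that the involution $\gamma$ is intrinsically the non-orientable holonomy on the stabilizer of a self-paired component (independent of choices). Once this is done, the multiplication laws of Sections~\ref{sect:grp:GwrmZ} and~\ref{sect:grp:GHwr_gamma_m_Z} follow mechanically. For the converse, I would build a Morse function realising any tuple $(\Agrp,\Ggrp,\sgOrd)$ or $(\Agrp,\Ggrp,\Hgrp,\gamma,\mel)$ by gluing $\sgOrd$ (respectively $2\mel$) symmetrically arranged copies of a Morse function on a disk realising $\Ggrp$ around the core of $\MBand$, inserting $\mel$ straddling disks realising $\Hgrp$ in the even case, and attaching orientable pieces realising $\Agrp$ along $\partial\MBand$, leveraging the realisability results of~\cite{KuznietsovaSoroka:UMJ:2021,Maksymenko:TA:2020} for each building block.
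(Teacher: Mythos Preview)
Your proposal contains a genuine gap at the very first step. The displayed isomorphism
\[
\pi_1\Orbit{\func} \;\cong\; \pi_0\StabilizerIsotId{\func|_{\Xman},\partial\Xman} \;\times\; \prod_{i=0}^k \pi_0\StabilizerIsotId{\func|_{\Yman_i},\partial\Yman_i}
\]
is neither true in general nor a consequence of Theorem~\ref{th:prelim_statements}. Statement~\ref{enum:th:prelim_statements:Sprf_product} of that theorem produces a product over the pieces $\Lman_i$ of the \emph{complement} of some auxiliary subsurface, with no factor coming from the subsurface itself; and the $\Xman$ appearing there has nothing to do with the regular neighbourhood $\regU{\CrComp}$ of Lemma~\ref{lm:unique_cr_level}. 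More importantly, the disks $\Yman_1,\ldots,\Yman_n$ from Lemma~\ref{lm:unique_cr_level} are \emph{permuted} by $\Stabilizer{\func,\partial\MBand}$ (this is exactly Section~\ref{sect:decomp_B_in_Yi}), so their stabilizers cannot be split off as a direct factor. Only the cylinder $\Yman_0$ is $\Stabilizer{\func}$-invariant, and only $\pi_0\StabilizerIsotId{\func|_{\Yman_0},\partial\Yman_0}$ splits off; this is how the paper extracts $\Agrp$ in Section~\ref{sect:proof:th:class:Mobius_band}.

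The consequence is that you have placed the groups $\Ggrp$ and $\Hgrp$ in the wrong location. In the paper they are built from the disks $\Yman_i$: one chooses $\gdif$ with $\eta(\gdif)=1$, studies its permutation action on the disks (types~\ref{dt1} and~\ref{dt2} of Lemma~\ref{lm:action_g_on_hY}), and sets $\Ggrp=\prod_j\PST{\DD{j}{0}}$, $\Hgrp=\prod_j\PST{\EE{j}{0}}$. Your scheme instead throws all the $\pi_0\StabilizerIsotId{\func|_{\Yman_i},\partial\Yman_i}$ into $\Agrp$ and then looks for $\Ggrp,\Hgrp$ inside $\pi_0\StabilizerIsotId{\func|_{\Xman},\partial\Xman}$; but if $\Xman=\regU{\CrComp}$, the components of $\Xman\setminus\CrComp$ are just collar annuli with trivial stabilizer contribution, so you would only ever recover $\Ggrp=\Hgrp=\UnitGroup$. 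Your later paragraphs (cyclic orbits, self-paired pieces, orientation-reversing holonomy giving $\gamma$) describe the correct mechanism, but it must be applied to the disks $\Yman_i$, not to $\Xman\setminus\CrComp$. The substantive work the paper does --- constructing the epimorphism $\eta$, proving Theorem~\ref{th:g_eta} (that $\ker\eta=\prod_i\PST{\Yman_i}$ and that a suitable $\gdif$ exists with $\gdif^{\sgOrd}$ central in the right sense), and using the Lefschetz/covering argument of Lemma~\ref{lm:b_is_odd} to force $\sgOrd$ odd in the pure type~\ref{dt1} case --- is what turns this intuition into a proof, and your outline does not supply equivalents for these steps.
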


As a consequence we get the following statement:
\begin{theorem}\label{th:class:nonoriented_case}
Let $\Mman$ be a non-orientable compact surface (possibly with boundary) of non-orientable genus $g\geq2$, i.e.\ distinct from Klein bottle and projective plane $\PrjPlane$.
Then for every $\func\in\FSp{\Mman}$ there exist $k\leq g$, groups $\Agrp, \Ggrp_1, \Hgrp_1, \ldots, \Ggrp_k, \Hgrp_k \in \classGroups$, and for each $j=1,\ldots,k$ an automorphism $\gamma_j:\Hgrp_j\to\Hgrp_j$ with $\gamma_j^{2}=\id_{\Hgrp_j}$ and $\mel_j\in\bN$ such that
\begin{equation}\label{equ:pi0Sprf_A_GHZ_non_orientable}
    \pi_1\Orbit{\func} \ \cong \ \Agrp  \times   \prod_{j=1}^{k}\WrGHZ{\Ggrp_j}{\Hgrp_j}{\gamma_j}{\mel_j}.
\end{equation}
This formally includes the case $k=0$, when $\pi_1\Orbit{\func} \cong \Agrp \in \classGroups$.
\end{theorem}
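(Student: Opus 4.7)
The plan is to derive Theorem~\ref{th:class:nonoriented_case} as a direct consequence of the three preceding theorems. First I would set $\Yman := \partial\Mman$ and verify the hypothesis of Theorem~\ref{th:prelim_statements}\ref{enum:th:prelim_statements:Sprf_product}. If $\partial\Mman\neq\varnothing$, this hypothesis is immediate; otherwise $\Mman$ is closed non-orientable, and since it is distinct from $\PrjPlane$ and the Klein bottle its non-orientable genus satisfies $g\geq3$, so $\chi(\Mman)=2-g<0$ and the hypothesis holds with $\Yman=\varnothing$.

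Next, applying Theorem~\ref{th:prelim_statements}\ref{enum:th:prelim_statements:Sprf_product} produces a compact subsurface $\Xman\subset\Mman$ such that $\overline{\Mman\setminus\Xman}$ decomposes as a disjoint union of compact pieces $\Lman_1,\ldots,\Lman_p$, each a $2$-disk, cylinder, or M\"obius band, together with the product isomorphism
\[
    \pi_1\Orbit{\func} \ \cong\ \prod_{i=1}^{p}\pi_0\StabilizerIsotId{\func|_{\Lman_i},\partial\Lman_i}
\]
from~\eqref{equ:expt_pi1Of__pi0Sprf}. For each orientable piece $\Lman_i$, reapplying~\eqref{equ:expt_pi1Of__pi0Sprf} to $\func|_{\Lman_i}$ identifies the corresponding factor with $\pi_1\Orbit{\func|_{\Lman_i}}$, which belongs to $\classGroups$ by Theorem~\ref{th:class:oriented_case}\ref{enum:th:class:oriented_case:not_torus}, since a disk and a cylinder are orientable and distinct from $S^2$ and $T^2$. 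For each M\"obius band piece $\Lman_i$, Theorem~\ref{th:class:Mobius_band} produces a group isomorphic either to $\Agrp_i\times\WrGHZ{\Ggrp_i}{\Hgrp_i}{\gamma_i}{\mel_i}$, or to $\Agrp_i\times\WrGZ{\Ggrp_i}{\sgOrd_i}$; the latter lies in $\classGroups$ by Definition~\ref{def:classG}\ref{enum:def:classG:x}--\ref{enum:def:classG:wr}.

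Since $\classGroups$ is closed under direct products, I would then collect every $\classGroups$-valued factor --- all disks, cylinders, the M\"obius bands yielding~\eqref{equ:pi0Sprf_A_GZ__MBand_1}, and each $\Agrp_i$ coming from the M\"obius bands yielding~\eqref{equ:pi0Sprf_A_GHZ__MBand} --- into a single group $\Agrp\in\classGroups$. What then remains is a product of exactly $k$ factors of the form $\WrGHZ{\Ggrp_j}{\Hgrp_j}{\gamma_j}{\mel_j}$, where $k$ counts the M\"obius band pieces falling into the first case of Theorem~\ref{th:class:Mobius_band}. The desired isomorphism~\eqref{equ:pi0Sprf_A_GHZ_non_orientable} follows, with $k=0$ corresponding to the situation in which every piece contributes a group in $\classGroups$.

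The step I expect to be the principal obstacle is verifying the bound $k\leq g$. This is purely topological and independent of $\func$: the M\"obius band pieces among $\Lman_1,\ldots,\Lman_p$ are pairwise disjoint in $\Mman$, so $k$ is bounded by the maximum number of pairwise disjoint embedded M\"obius bands in $\Mman$, which equals the non-orientable genus $g$. One verification proceeds via $\bZ/2$-homology: the cores of such disjoint M\"obius bands are one-sided simple closed curves whose classes in $H_1(\Mman;\bZ/2)$ are linearly independent, and the non-orientable genus bounds the number of independent one-sided classes by $g$. Alternatively, one may cut $\Mman$ along the boundary of one M\"obius band, cap it off by a disk, and induct on $g$.
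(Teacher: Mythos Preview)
Your proposal is correct and follows essentially the same route as the paper's proof: decompose via Theorem~\ref{th:prelim_statements}\ref{enum:th:prelim_statements:Sprf_product}, invoke Theorem~\ref{th:class:oriented_case} on the orientable pieces and Theorem~\ref{th:class:Mobius_band} on the M\"obius bands, absorb all $\classGroups$-valued factors into a single $\Agrp$, and bound the number of remaining wreath-type factors by the maximal number of disjoint embedded M\"obius bands. The paper's proof is terser --- it does not spell out the hypothesis check for Theorem~\ref{th:prelim_statements}(b) and simply cites the definition of non-orientable genus for the bound $k\leq g$ --- but the logical skeleton is identical to yours.
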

\begin{proof}
Let $\func\in\FSp{\Mman}$.
Then by Theorem~\ref{th:prelim_statements}\ref{enum:th:prelim_statements:Sprf_product}, $\pi_1\Orbit{\func}\cong \prod\limits_{j=1}^{p}\pi_1\Orbit{\func_j}$, where each $\Lman_j$ is either a $2$-disk or a cylinder or a M\"obius band and $\func_j = \func|_{\Lman_j} \in\FSp{\Lman_j}$.
One can assume that
\begin{itemize}[label={$- $}, leftmargin=*]
\item
for some $k<p$ we have that $\Lman_1,\ldots,\Lman_{k}$ are M\"obius bands such that $\pi_1\Orbit{\func|_{\Lman_j}}$ is of type~\eqref{equ:pi0Sprf_A_GHZ__MBand},
\item
while all others $\Lman_{k+1},\ldots,\Lman_{p}$ are $2$-disks, cylinders, or M\"obius bands, so $\pi_1\Orbit{\func|_{\Lman_j}}\in\classGroups$ for all these subsurfaces.
\end{itemize}
Recall that the non-orientable genus of $\Mman$ can be defined as the maximal number of embedded and mutually disjoint M\"obius bands in $\Mman$.
Therefore, since $\Lman_j$ are mutually disjoint, we should have that $k\leq g$.

Now by Theorem~\ref{th:class:Mobius_band} and statement~\ref{enum:th:class:oriented_case:not_torus} of Theorem~\ref{th:class:oriented_case} there exist groups $\Agrp_i, \Ggrp_j,\Hgrp_j \in\classGroups$, automorhisms $\gamma_j:\Hgrp_j\to\Hgrp_j$ with $\gamma_j^2=\id_{\Hgrp_j}$ and numbers $\mel_j\in\bN$, where $i=1,\ldots,p$ and $j=1,\ldots,k$, such that
\[
    \pi_1\Orbit{\func_j} \cong
    \begin{cases}
        \Agrp_j \ \times  \ \WrGHZ{\Ggrp_j}{\Hgrp_j}{\gamma_j}{\mel_j}, & j=1,\ldots,k,\\
        \Agrp_j, & j=k+1,\ldots,p.
    \end{cases}
\]
Denote $\Agrp = \myprod_{i=1}^{p}\Agrp_i$.
Since the class $\classGroups$ is closed under direct products, see property~\ref{enum:def:classG:x} of Definition~\ref{def:classG}, $\Agrp\in\classGroups$ as well, and therefore $\pi_1\Orbit{\func} \cong  \Agrp \times \myprod_{j=1}^{k}\WrGHZ{\Ggrp_j}{\Hgrp_j}{\gamma_j}{\mel_j}$, which agrees with~\eqref{equ:pi0Sprf_A_GHZ_non_orientable}.
\end{proof}

\begin{remark}\rm
The question of realization of groups of the form~\eqref{equ:pi0Sprf_A_GHZ_non_orientable} as $\pi_1\OrbitComp{\func}{\func}$ for some $\func\in\FSp{\Mman}$, where $\Mman$ is a non-orientable surface distinct from Klein bottle and projective plane, is more delicate, and will be studied in another paper.
\end{remark}

\subsection{Structure of the paper}
Section~\ref{sect:alg_preliminaries} contains two results characterizing groups $\WrGZ{\Ggrp}{\mel}$ and $\WrGHZ{\Ggrp}{\Hgrp}{\gamma}{\mel}$, see Lemmas~\ref{lm:3x3_G_m:char} and~\ref{lm:3x3_G_H_gamma_m:char}.

In Section~\ref{sect:preliminaries} we present preliminary information about actions of diffeomorphisms on spaces of functions on surfaces.
In particular, we discuss the codimension (Milnor number) of a singularity used in Lemma~\ref{th:prelim_statements}, see Theorem~\ref{th:maps_of_fin_codim}.

Section~\ref{sect:func_on_surf} describes several constructions related with smooth functions on surfaces.

In Section~\ref{sect:decomp_B_in_Yi} we recall the result from~\cite{MaksymenkoKuznietsova:PIGC:2019},
see Lemma~\ref{lm:unique_cr_level}, claiming that every $\func\in\FSp{\MBand}$ yields a certain ``almost CW'' partition of $\MBand$ which is preserved by elements of $\Stabilizer{\func}$.
We also study the action of $\Stabilizer{\func}$ on elements of that partition.

In Section~\ref{sect:proof:th:class:Mobius_band} we start to prove Theorem~\ref{th:class:Mobius_band} reducing it to the case when the group $\Agrp$ is trivial.
We also formulate Theorem~\ref{th:g_eta} and in Section~\ref{sect:decuct:th:class:Mobius_band} deduce from it Theorem~\ref{th:class:Mobius_band}, and, in particular, establish Lemmas~\ref{lm:all_disks_T1}-\ref{lm:there_are_disks_T1_and_T2} giving a more detailed information on the structure of $\pi_1\Orbit{\func}$.

Section~\ref{sect:proof:th:g_eta} contains the proof of Theorem~\ref{th:g_eta} itself.

\section{Algebraic preliminaries}\label{sect:alg_preliminaries}
\subsection{Exact $(3\times3)$-diagrams and epimorphisms onto $\bZ$}
The following lemmas are easy and well known.
We leave them for the reader.

\begin{lemma}\label{lm:3x3-diagram}
Let $\Bgrp$ be a group and $\Agrp, \Lgrp \triangleleft \Bgrp$ two normal subgroups, and $\Kgrp =\Agrp\cap\Lgrp$.
Then we have the following $(3\times3)$-diagram in which all rows and columns are short exact sequences:
\begin{equation}\label{equ:3x3-diagram}
\begin{gathered}
    \xymatrix@C=1em@R=1em{
        \exd
        {\Agrp\cap\Lgrp}
        {\Lgrp}
        {\Lgrp / \Kgrp}
        {\Agrp}
        {\Bgrp}
        {\Bgrp/\Agrp}
        {\Agrp / \Kgrp}
        {\Bgrp/\Lgrp}
        {\Bgrp/(\Agrp\Lgrp)}
}
\end{gathered}
\end{equation}
\end{lemma}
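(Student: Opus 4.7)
The plan is to verify directly that every row and column of the diagram is a short exact sequence and that each square commutes, using nothing beyond the three standard isomorphism theorems. Since $\Agrp$ and $\Lgrp$ are both normal in $\Bgrp$, the product $\Agrp\Lgrp = \Lgrp\Agrp$ is a normal subgroup of $\Bgrp$, and $\Kgrp = \Agrp \cap \Lgrp$ is normal in each of $\Agrp$, $\Lgrp$, $\Agrp\Lgrp$, and $\Bgrp$, so every quotient appearing in the diagram is a well-defined group.

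First I would dispose of the obvious parts. The top row $\Agrp\cap\Lgrp \hookrightarrow \Lgrp \twoheadrightarrow \Lgrp/\Kgrp$ and the left column $\Agrp\cap\Lgrp \hookrightarrow \Agrp \twoheadrightarrow \Agrp/\Kgrp$ are immediate from the definition $\Kgrp = \Agrp \cap \Lgrp$, while the middle row and middle column are the canonical quotient sequences for the normal subgroups $\Agrp, \Lgrp \triangleleft \Bgrp$.

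Next I would handle the bottom row via the second isomorphism theorem. The composition $\Agrp \hookrightarrow \Bgrp \twoheadrightarrow \Bgrp/\Lgrp$ has kernel $\Agrp \cap \Lgrp = \Kgrp$ and image $\Agrp\Lgrp/\Lgrp$, so it induces an injection $\Agrp/\Kgrp \hookrightarrow \Bgrp/\Lgrp$; since $\Agrp\Lgrp \triangleleft \Bgrp$, the third isomorphism theorem gives the cokernel $(\Bgrp/\Lgrp) / (\Agrp\Lgrp/\Lgrp) \cong \Bgrp/(\Agrp\Lgrp)$. The right column is obtained by the same argument with the roles of $\Agrp$ and $\Lgrp$ exchanged.

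Finally I would check commutativity of the four squares, which is routine: every arrow in the diagram is induced either by one of the inclusions $\Agrp, \Lgrp, \Agrp\cap\Lgrp \hookrightarrow \Bgrp$, or is a canonical projection modulo $\Kgrp$, $\Agrp$, or $\Lgrp$, and these evidently compose consistently. There is no substantive obstacle; the statement is the familiar $3\times 3$-lemma (or nine lemma) for groups, and the whole argument reduces to the standard isomorphism theorems, which is exactly why the authors are content to leave it to the reader.
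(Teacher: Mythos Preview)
Your proof is correct and is exactly the routine verification via the second and third isomorphism theorems that the paper has in mind; indeed, the authors explicitly leave this lemma to the reader as easy and well known, so there is no alternative approach to compare with.
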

Such diagrams will be called \term{exact $(3\times3)$-diagram}.

\begin{lemma}\label{lm:char_B_rtimes_Z}
Let $\Bgrp$ be a group, $\eta:\Bgrp\to\bZ$ be an epimorphism with kernel $\Lgrp$, and $\gel\in\Bgrp$ be any element such that $\eta(\gel)=1$.
Denote by $\phi:\Lgrp\to\Lgrp$, $\phi(\vel) = \gel^{-1} \vel \gel$, the automorphism of $\Lgrp$ induced by conjugation of $\Bgrp$ with $\gel$.
Then the map $\theta:\smd{\Lgrp}{\phi}{\bZ} \to \Bgrp$, $\theta(\vel,k)= \vel \gel^{k}$, is an isomorphism inducing an isomorphism of the following short exact sequences:
\begin{equation}\label{equ:epi_Z_split}
\begin{gathered}
    \xymatrix@R=1.5em{
    \Lgrp \ar@{^(->}[r]            & \smd{\Lgrp}{\phi}{\bZ} \ar@{->>}[r] \ar[d]_{\theta} & \bZ \\
    \Lgrp \ar@{^(->}[r] \ar@{=}[u] & \Bgrp\ar@{->>}[r]^-{\eta}                           & \bZ \ar@{=}[u]
    }
\end{gathered}
\end{equation}
\end{lemma}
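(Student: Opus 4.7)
The plan is to exhibit $\theta$ as an isomorphism by establishing the unique normal-form decomposition $\Bgrp = \Lgrp \cdot \langle \gel \rangle$ and then checking that this decomposition is exactly the internal structure of a semidirect product. Since $\Lgrp = \ker(\eta)$ is automatically normal in $\Bgrp$, conjugation by $\gel$ preserves $\Lgrp$, so $\phi$ (in whichever direction compatible with the convention~\eqref{equ:G_rtimes_Z:multiplication}) is a well-defined automorphism. Observe that for any $\bel \in \Bgrp$, setting $k := \eta(\bel)$ we have $\eta(\bel \gel^{-k}) = 0$, hence $\vel := \bel\gel^{-k} \in \Lgrp$; this writes $\bel = \vel \gel^{k}$. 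Uniqueness is automatic from $\eta(\vel \gel^{k}) = k\,\eta(\gel) = k$ combined with the fact that $\gel^k$ is then determined.

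From this normal-form statement, surjectivity of $\theta(\vel,k) = \vel \gel^{k}$ is immediate, and injectivity follows because $\theta(\vel,k) = e$ forces $k = \eta(e) = 0$ and then $\vel = e$. The homomorphism property reduces to the identity
\[
    \gel^{k} \wel = \phi^{k}(\wel)\,\gel^{k},
\]
which is exactly the defining relation of conjugation iterated $k$ times; this matches the semidirect product law $(\vel,k)(\wel,l) = (\vel\,\phi^{k}(\wel),\,k+l)$ under $\theta$.

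Finally, commutativity of~\eqref{equ:epi_Z_split} is verified on each square separately: the left square is the tautology $\theta(\vel,0) = \vel$, and the right square is $\eta(\theta(\vel,k)) = \eta(\vel)+k\eta(\gel) = k$, which equals the projection $(\vel,k)\mapsto k$. I do not foresee any serious obstacle; the only subtlety worth flagging is keeping the direction of $\phi$ consistent with the sign conventions in~\eqref{equ:G_rtimes_Z:multiplication}, so that $\theta$ intertwines the two multiplications rather than their opposites.
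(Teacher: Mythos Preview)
Your proof is correct and is precisely the standard argument; the paper itself omits the proof entirely (it groups this lemma with two others under ``The following lemmas are easy and well known. We leave them for the reader''), so there is nothing to compare against beyond noting that your write-up is what the authors evidently had in mind.

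Your caveat about the direction of $\phi$ is well placed: with the paper's multiplication convention~\eqref{equ:G_rtimes_Z:multiplication}, namely $(\vel,k)(\wel,l)=(\vel\,\phi^{k}(\wel),k+l)$, the identity you need is $\gel^{k}\wel = \phi^{k}(\wel)\,\gel^{k}$, i.e.\ $\phi(\wel)=\gel\,\wel\,\gel^{-1}$, whereas the lemma as stated writes $\phi(\vel)=\gel^{-1}\vel\gel$. This is a sign slip in the statement rather than in your argument; your handling of it (working with whichever conjugation direction matches the semidirect-product law) is the right response.
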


\begin{lemma}\label{lm:smd_iso_Z}
Let $\Lgrp$ and $\Lgrp'$ be two groups, $\phi\in\Aut(\Lgrp)$ and $\phi'\in\Aut(\Lgrp')$ some of their automorphisms, and $\qhom:\Lgrp \to \Lgrp'$ be a homomorphism.
Then the following conditions are equivalent:
\begin{enumerate}[label={\rm(\alph*)}]
\item $\qhom\circ\phi = \phi'\circ\qhom$;
\item the map $\zhom:\smd{\Lgrp}{\phi}{\bZ}\to\smd{\Lgrp'}{\phi'}{\bZ}$ defined by $\zhom(\ael,k) = (\qhom(\ael),k)$ is a homomorphism.
\end{enumerate}
In other words, the following diagrams are mutually commutative or non-commutative:
\begin{align*}
&
\xymatrix{
    \Lgrp  \ar[r]^-{\phi}  \ar[d]_-{\qhom} & \Lgrp \ar[d]^-{\qhom}\\
    \Lgrp' \ar[r]^-{\phi'}                 & \Lgrp'
}
&&
\xymatrix{
    \Lgrp\ar@{^(->}[r] \ar[d]^-{\qhom} &  \smd{\Lgrp}{\phi}{\bZ}   \ar[d]^-{\zhom}\ar[r] & \bZ \ar@{=}[d]\\
    \Lgrp'\ar@{^(->}[r]                &  \smd{\Lgrp'}{\phi'}{\bZ} \ar[r]                & \bZ
    }
\end{align*}
Under above conditions $\qhom$ is an isomorphism iff so is $\zhom$.
\end{lemma}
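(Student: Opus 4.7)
The plan is to verify the equivalence of (a) and (b) by a straightforward computation of both sides of the homomorphism identity for $\zhom$, and then deduce the isomorphism statement from a kernel/quotient argument. Using the multiplication rule~\eqref{equ:G_rtimes_Z:multiplication}, I would compute
\[
    \zhom\bigl((\ael,k)(\bel,l)\bigr)
    = \bigl(\qhom(\ael)\,\qhom\bigl(\phi^{k}(\bel)\bigr),\, k+l\bigr),
\]
while
\[
    \zhom(\ael,k)\,\zhom(\bel,l)
    = \bigl(\qhom(\ael)\,(\phi')^{k}\bigl(\qhom(\bel)\bigr),\, k+l\bigr).
\]
Since $\qhom$ is already a homomorphism, (b) is therefore equivalent to the identity $\qhom\circ\phi^{k} = (\phi')^{k}\circ\qhom$ for every $k\in\bZ$. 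Specializing to $k=1$ yields (a); conversely, an easy induction on $|k|$, using the invertibility of $\phi$ and $\phi'$, propagates (a) to all $k\in\bZ$.

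For the final claim, observe that $\zhom$ restricts to $\qhom$ on the normal subgroup $\Lgrp\times\{0\}$ and induces the identity on the quotient $\bZ$. This gives a commutative ladder between the short exact sequences $\Lgrp\hookrightarrow\smd{\Lgrp}{\phi}{\bZ}\twoheadrightarrow\bZ$ and $\Lgrp'\hookrightarrow\smd{\Lgrp'}{\phi'}{\bZ}\twoheadrightarrow\bZ$, so the five-lemma, or an even more elementary direct inspection of pairs $(\ael,k)$, shows that $\zhom$ is injective, surjective, or bijective iff $\qhom$ is. When $\qhom$ is an isomorphism, composing (a) on both sides by $\qhom^{-1}$ gives $\qhom^{-1}\circ\phi' = \phi\circ\qhom^{-1}$, so the already established equivalence supplies an honest group inverse $\zhom^{-1}\colon(\ael',k)\mapsto(\qhom^{-1}(\ael'),k)$.

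The argument is essentially formal, and the only spot requiring any care is the forward direction (a)$\Rightarrow$(b), where one must extend the intertwining from $k=1$ to all $k\in\bZ$; the induction noted above handles this without difficulty, so no genuine obstacle is expected.
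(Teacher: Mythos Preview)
Your argument is correct and is exactly the kind of direct verification the paper has in mind: the authors explicitly leave this lemma to the reader as ``easy and well known'' and do not supply a proof. Your computation of $\zhom\bigl((\ael,k)(\bel,l)\bigr)$ versus $\zhom(\ael,k)\,\zhom(\bel,l)$, the induction on $|k|$, and the five-lemma (or kernel/image) treatment of the isomorphism claim constitute precisely the expected elementary proof.
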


Also recall that a group $\Lgrp$ is a \term{direct product of its subgroups $\Ggrp_1,\ldots,\Ggrp_n < \Lgrp$} if the map
\[
    \delta: \Ggrp_1 \times\cdots\times\Ggrp_n \to\Lgrp,
    \qquad
    \delta(\gel_1,\ldots,\gel_n) = \gel_1\cdots\gel_n,
\]
is an \term{isomorphism} of groups.
Notice that in this definition some groups are allowed to be unit groups.

\subsection{Characterization of groups $\WrGZ{\Ggrp}{\mel}$ and $\WrGHZ{\Ggrp}{\Hgrp}{\gamma}{\mel}$}

Let $\Ggrp$ be a group with unit $e$ and $\mel\geq1$.
In Section~\ref{sect:grp:GwrmZ} we defined the semidirect product $\WrGZ{\Ggrp}{\mel}$ corresponding to the non-effective action of $\bZ$ on $\Ggrp^{\mel}$ by cyclic left shifts of coordinates.
Notice that $\WrGZ{\Ggrp}{\mel}$ contains the element $\gdif = (0,\ldots,0, 1)$ and the following subgroups
\[
    \Ggrp_i:= e\times \cdots \times e \times \Ggrp \times e\times \cdots \times e \times 0, \quad i=0,\ldots,\mel-1,
\]
where the multiple $\Ggrp$ stands at position $i$.
One easily checks that $\gdif^{\mel}=(0,\ldots,0, \mel)$ commutes with each $\Ggrp_i$, $\gdif \Ggrp_i \gdif^{-1} = \Ggrp_{i+1\bmod\mel}$ for all $i=0,\ldots,\mel-1$, and the kernel $\Ggrp^{\mel} \times 0$ of the natural epimorphism $\eta:\WrGZ{\Ggrp}{\mel}\to\bZ$, $\eta(\gel_0\ldots,\gel_{\mel-1},k)=k$, splits into a direct product of subgroups $\Ggrp_i$.
As the following lemma shows these properties characterize the group $\WrGZ{\Ggrp}{\mel}$.

\begin{lemma}[{\cite[Lemma~2.3]{Maksymenko:TA:2020}}]\label{lm:3x3_G_m:char}
Let $\eta:\Bgrp\to\bZ$ be an epimorphism with kernel $\Lgrp:=\ker(\eta)$.
Suppose there exist $\mel\geq1$, $\gel\in\Bgrp$, and a subgroup $\Ggrp$ of $\Lgrp$ such that
\begin{enumerate}[label={\rm(\alph*)}]
\item $\eta(\gel)=1$ and $\gel^{\mel}$ commutes with $\Lgrp$;
\item $\Lgrp$ is a direct product of subgroups $\Ggrp_i := \gel^{i}\Ggrp\gel^{-i}$, $i=0,\ldots,\mel-1$.
\end{enumerate}
Then the following map $\theta\colon \WrGZ{\Ggrp}{\mel}\to\Bgrp$, $\theta(\aeli{0},\aeli{1},\ldots,\aeli{\mel-1}; \, k) = \Bigl( \myprod_{i=0}^{\mel-1} \gel^{i} \aeli{i} \gel^{-i} \Bigr) \gel^k$,
is an isomorphism which also yields an isomorphism of the following short exact sequences:
\begin{equation}\label{equ:iso_GmZ__B}
\begin{gathered}
    \xymatrix@R=3ex{
        \Ggrp^{\mel}\times 0   \ar@{^(->}[r] \ar[d]                  &
        \WrGZ{\Ggrp}{\mel}     \ar[d]^-{\theta}\ar[r]                &
        \bZ                                       \ar@{=}[d]
        \\
        \Lgrp                                     \ar@{^(->}[r]      &
        \Bgrp                                     \ar[r]^-{\eta}     &
        \bZ
    }
\end{gathered}
\end{equation}
Let also $\Agrp$ be a normal subgroup of $\Bgrp$.
Denote $\Kgrp:=\Agrp\cap\Lgrp$, $\Pgrp := \Agrp\cap\Ggrp$, and suppose that
\begin{enumerate}[label={\rm(\alph*)}, resume]
\item $\eta(\Agrp)=\mel\bZ$, $\gel^{\mel}\in\Agrp$,
\item $\Kgrp$ is generated by subgroups $\Pgrp_i := \gel^{i}\Pgrp\gel^{-i}$, $i=0,\ldots,\mel-1$.
\end{enumerate}
Then $\theta(\Pgrp^{\mel}\times\mel\bZ)=\Agrp$, whence $\theta$ induces an isomorphism of the following exact $(3\times3)$-diagrams:
\begin{equation}\label{equ:diagram:eta_GwrmZ}
	\begin{gathered}
		\xymatrix@C=1.5em@R=1.5em{
        \exd
        {\Pgrp^{\mel}\times0}
        {\Ggrp^{\mel}\times0}
        {(\Ggrp/\Pgrp)^{\mel}\times0}
        {\Pgrp^{\mel}\times\mel\bZ}
        {\WrGZ{\Ggrp}{\mel}}
        {\WrGZm{(\Ggrp/\Pgrp)}{\mel}}
        {\mel\bZ}
        {\bZ}
        {\bZ_{\mel}}
		}
        \
        \xymatrix{ \\ \cong \\ }
        \
        \xymatrix@C=1.5em@R=1.5em{
            \exd
            {\Kgrp}
            {\Lgrp}
            {\Lgrp/\Kgrp}
            {\Agrp}
            {\Bgrp}
            {\Bgrp/\Agrp}
            {\mel\bZ}
            {\bZ}
            {\bZ_{\mel}}
        }
	\end{gathered}
\end{equation}
and that isomorphism is the identity on the lower row.
\end{lemma}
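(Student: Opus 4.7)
The plan is to first establish part 1 (that $\theta$ is an isomorphism making~\eqref{equ:iso_GmZ__B} commute), and then use this to identify the preimage $\theta^{-1}(\Agrp)$ for part 2.

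For part 1, I would first check that $\theta$ is a homomorphism. Given $v=(\aeli{0},\ldots,\aeli{\mel-1};k)$ and $w=(\beli{0},\ldots,\beli{\mel-1};l)$, the image $\theta(v)\theta(w)$ expands to $\bigl(\prod_{i=0}^{\mel-1}\gel^{i}\aeli{i}\gel^{-i}\bigr)\gel^{k}\bigl(\prod_{j=0}^{\mel-1}\gel^{j}\beli{j}\gel^{-j}\bigr)\gel^{l}$. Conjugating the right product by $\gel^{k}$ shifts each $\gel^{j}\beli{j}\gel^{-j}$ to $\gel^{k+j}\beli{j}\gel^{-(k+j)}$; using (a), that $\gel^{\mel}$ commutes with $\Lgrp$, we may reduce the exponents modulo $\mel$, which produces exactly the wreath-product multiplication~\eqref{equ:G_rtimes_Z:multiplication} with the shift $\ahom$ acting by cyclic left rotation. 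To see that $\theta$ is a bijection, note that for any $b\in\Bgrp$ we may set $k=\eta(b)$, so $b\gel^{-k}\in\Lgrp$; then (b) gives a unique factorisation $b\gel^{-k}=\prod_{i=0}^{\mel-1}\gel^{i}\aeli{i}\gel^{-i}$ with $\aeli{i}\in\Ggrp$, which is precisely $\theta^{-1}(b)$. Commutativity of~\eqref{equ:iso_GmZ__B} is then immediate from the definition.

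For part 2, the task is to verify $\theta(\Pgrp^{\mel}\times\mel\bZ)=\Agrp$. The inclusion ``$\subseteq$'' is easy: since $\Pgrp\subset\Agrp$ by definition, and $\Agrp$ is normal in $\Bgrp$, each conjugate $\gel^{i}\Pgrp\gel^{-i}$ sits in $\Agrp$; together with $\gel^{\mel}\in\Agrp$ (hypothesis (c)), the image of any element of $\Pgrp^{\mel}\times\mel\bZ$ lies in $\Agrp$. For ``$\supseteq$'', take $\ael\in\Agrp$. Hypothesis (c) gives $\eta(\ael)=\mel t$ for some $t\in\bZ$, whence $\ael\gel^{-\mel t}\in\Agrp\cap\Lgrp=\Kgrp$. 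By hypothesis (d), $\Kgrp$ is generated by the $\Pgrp_i$; combined with the direct product decomposition of $\Lgrp$ from (b), each element of $\Kgrp$ has a unique expression $\prod_{i}\gel^{i}\peli{i}\gel^{-i}$ with $\peli{i}\in\Pgrp$, so $\ael=\theta(\peli{0},\ldots,\peli{\mel-1};\mel t)\in\theta(\Pgrp^{\mel}\times\mel\bZ)$.

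Having identified $\theta^{-1}(\Agrp)=\Pgrp^{\mel}\times\mel\bZ$, the isomorphism of the middle rows of the two $(3\times 3)$-diagrams in~\eqref{equ:diagram:eta_GwrmZ} passes to the columns and then to the quotient row by the standard Noether argument (Lemma~\ref{lm:3x3-diagram}). The top row of the left diagram is the obvious decomposition $\Pgrp^{\mel}\subset\Ggrp^{\mel}$ with quotient $(\Ggrp/\Pgrp)^{\mel}$, the bottom row is $\mel\bZ\subset\bZ\twoheadrightarrow\bZ_{\mel}$, and the lower-right quotient $\WrGZm{(\Ggrp/\Pgrp)}{\mel}$ arises precisely because the shift automorphism $\ahom$ now acts modulo $\mel$ on both factors; this matches the construction in Section~\ref{sect:grp:GwrmZ}. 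The induced map on the bottom row is the identity on $\bZ_{\mel}$ by construction.

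The only potentially delicate point is the bookkeeping in the homomorphism check: one must carefully track the cyclic shift of indices induced by conjugation with $\gel$ and appeal to (a) to collapse $\gel^{\mel}$ past arbitrary elements of $\Lgrp$. Once this is in place, everything else is a formal application of the $(3\times 3)$-lemma.
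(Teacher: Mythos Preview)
Your argument is correct and matches the paper's treatment. The paper does not prove this lemma in-text (it is quoted from \cite{Maksymenko:TA:2020}), but it does prove the parallel Lemma~\ref{lm:3x3_G_H_gamma_m:char}, and the method there is the same as yours, only packaged slightly differently: rather than expanding $\theta(v)\theta(w)$ directly, the paper invokes Lemmas~\ref{lm:char_B_rtimes_Z} and~\ref{lm:smd_iso_Z} to reduce the homomorphism check to the single intertwining identity $\theta|_{\Lgrp}\circ\ahom=\xi\circ\theta|_{\Lgrp}$, where $\xi$ is the conjugation-by-$\gel$ automorphism of $\Lgrp$. That identity is exactly the index-shift computation you describe, and your treatment of part~2 (splitting $\Agrp$ as $\Kgrp$ times $\langle\gel^{\mel}\rangle$ and factoring $\Kgrp$ through the $\Pgrp_i$) is the same as the paper's.
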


The following lemma gives a similar characterization of groups $\WrGHZ{\Ggrp}{\Hgrp}{\gamma}{\mel}$.

\begin{lemma}\label{lm:3x3_G_H_gamma_m:char}
Let $\eta:\Bgrp\to\bZ$ be an epimorphism with kernel $\Lgrp:=\ker(\eta)$.
Let also $\gel\in\Bgrp$ and $\xi:\Lgrp\to\Lgrp$, $\xi(l) = \gel^{-1} l \gel$, be the conjugation by $\gel$ automorphism of $\Lgrp$.
Suppose further that there exist $\mel\geq1$, and two subgroups $\Ggrp$ and $\Hgrp$ of $\Lgrp$ such that
\begin{enumerate}[label={\rm(\alph*)}]
\item\label{enum:lm:3x3_G_H_gamma_m:char:g2m_commutes_with_L} 
$\eta(\gel)=1$, and $\gel^{2\mel}$ commutes with $\Lgrp$, i.e.\ $\xi^{2\mel}=\id_{\Lman}$;
\item\label{enum:lm:3x3_G_H_gamma_m:char:gm_H_H} $\xi^{\mel}(\Hgrp) = \Hgrp$, so we have an automorphism $\gamma = \xi^{\mel}|_{\Hgrp}:\Hgrp\to\Hgrp$ with $\gamma^2=\id_{\Hgrp}$;
\item\label{enum:lm:3x3_G_H_gamma_m:char:L_GixHj} $\Lgrp$ is a direct product of subgroups%
\footnote{The signs of powers of $\gel$ are chosen so that $\xi$ will act on the tuple of subgroups
$(\Ggrp_0,\Ggrp_1,\ldots,\Ggrp_{2\mel-1})$ by cyclically shifting them to the left.
Similar observations hold for $\Hgrp_0,\ldots,\Hgrp_{\mel-1}$.
Note that this agrees with the behavior of $\bhom$ from Section~\ref{sect:grp:GHwr_gamma_m_Z}.}
\begin{align*}
 \Ggrp_i &:= \xi^{-i}(\Ggrp) \equiv \gel^{i}\Ggrp\gel^{-i}, \ (i=0,\ldots,2\mel-1), \\
 \Hgrp_j &:= \xi^{-j}(\Hgrp) \equiv \gel^{j}\Hgrp\gel^{-j}, \ (j=0,\ldots,\mel-1).
\end{align*}
\end{enumerate}
Then the following map $\theta\colon \WrGHZ{\Ggrp}{\Hgrp}{\gamma}{\mel}\to\Bgrp$,
\begin{align}\label{equ:theta_iso}
\theta(\aeli{0},\aeli{1},\ldots,\aeli{2\mel-1};\,
        \beli{0},\beli{1},\ldots,\beli{\mel-1}; \, k) =
        \Bigl( \prod_{i=0}^{2\mel-1} \xi^{-i} (\aeli{i})  \cdot \prod_{j=0}^{\mel-1} \xi^{-j} (\beli{j}) \Bigr) \gel^k,
\end{align}
is an isomorphism which also yields an isomorphism of the following short exact sequences:
\begin{equation}\label{equ:iso_G2mHmZ__B}
\begin{gathered}
    \xymatrix@R=3ex{
        \Ggrp^{2\mel}\times\Hgrp^{\mel}\times 0   \ar@{^(->}[r] \ar[d]     &
        \WrGHZ{\Ggrp}{\Hgrp}{\gamma}{\mel}        \ar[d]^-{\theta}\ar[rr]  &&
        \bZ                                       \ar@{=}[d]
        \\
        \Lgrp                                     \ar@{^(->}[r]           &
        \Bgrp                                     \ar[rr]^-{\eta}         &&
        \bZ
    }
\end{gathered}
\end{equation}
Let also $\Agrp$ be a normal subgroup of $\Bgrp$.
Denote $\Kgrp:=\Agrp\cap\Lgrp$, $\Pgrp := \Agrp\cap\Ggrp$, $\Qgrp := \Agrp\cap\Hgrp$, and suppose that
\begin{enumerate}[label={\rm(\alph*)}, resume]
\item\label{enum:lm:3x3_G_H_gamma_m:char:b_2m} $\eta(\Agrp)=2\mel\bZ$, $\gel^{2\mel}\in\Agrp$;
\item\label{enum:lm:3x3_G_H_gamma_m:char:K_gen_Pi_Qj} $\Kgrp$ is generated by subgroups
\begin{equation}\label{equ:subgroups_Pi_Qj}
\begin{gathered}
    \Pgrp_i := \xi^{i}(\Pgrp) \equiv \gel^{-i}\Pgrp\gel^{i} = \Agrp\cap\Ggrp_i,  \ i=0,\ldots,2\mel-1, \\
    \Qgrp_j := \xi^{j}(\Qgrp) \equiv \gel^{-j}\Qgrp\gel^{j} = \Agrp\cap\Hgrp_j,  \ j=0,\ldots,\mel-1.
\end{gathered}
\end{equation}
\end{enumerate}
Then $\theta(\Pgrp^{\mel}\times\Qgrp^{\mel}\times2\mel\bZ)=\Agrp$, whence $\theta$ also induces an isomorphism of the following exact $(3\times3)$-diagrams:
{\small\[
		\xymatrix@C=1em@R=1.5em{
            \exd
			{\Pgrp^{2\mel}\!\times\!\Qgrp^{\mel}\!\times\! 0}
            {\Ggrp^{2\mel}\!\times\!\Hgrp^{\mel}\!\times\! 0}
            {(\Ggrp/\Pgrp)^{2\mel}\!\times\! (\Hgrp/\Qgrp)^{\mel} \!\times\! 0}
			{\Pgrp^{2\mel}\!\times\!\Qgrp^{\mel}\!\times\! 2\mel\bZ}
            {\WrGHZ{\Ggrp}{\Hgrp}{\gamma}{\mel}}
            {\WrGHZm{\Ggrp/\Pgrp}{\Hgrp/\Qgrp}{\gamma}{\mel}}
			{2\mel\bZ}
            {\bZ}
            {\bZ_{2\mel}}
		}
        \xymatrix{ \\ \cong \\ }
        \xymatrix@C=1em@R=1.6em{
            \exd
            {\Kgrp}
            {\Lgrp}
            {\Lgrp/\Kgrp}
            {\Agrp}
            {\Bgrp}
            {\Bgrp/\Agrp}
            {2\mel\bZ}
            {\bZ}
            {\bZ_{2\mel}}
        }
\]}%
and that isomorphism is the identity on the lower row.
\end{lemma}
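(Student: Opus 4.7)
The plan is to mirror the strategy used in Lemma~\ref{lm:3x3_G_m:char}: combine Lemma~\ref{lm:char_B_rtimes_Z}, which identifies $\Bgrp$ with $\smd{\Lgrp}{\xi}{\bZ}$ via $(v,k)\mapsto v\gel^{k}$, with the internal direct-product decomposition of $\Lgrp$ supplied by condition~\ref{enum:lm:3x3_G_H_gamma_m:char:L_GixHj}. Concretely, condition~\ref{enum:lm:3x3_G_H_gamma_m:char:L_GixHj} yields an isomorphism
\[
 \mu\colon \Ggrp^{2\mel}\times\Hgrp^{\mel}\xrightarrow{~\cong~}\Lgrp,\quad
 (\aeli{0},\ldots,\aeli{2\mel-1};\beli{0},\ldots,\beli{\mel-1})\longmapsto
 \prod_{i=0}^{2\mel-1}\xi^{-i}(\aeli{i})\cdot\prod_{j=0}^{\mel-1}\xi^{-j}(\beli{j}),
\]
matching the first factor of~\eqref{equ:theta_iso}. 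Once I verify that $\mu$ conjugates $\xi$ into $\bhom$, Lemma~\ref{lm:smd_iso_Z} will upgrade $\mu$ to an isomorphism $\smd{(\Ggrp^{2\mel}\times\Hgrp^{\mel})}{\bhom}{\bZ}\to\smd{\Lgrp}{\xi}{\bZ}$, and composing with Lemma~\ref{lm:char_B_rtimes_Z} produces the map $\theta$ together with the commutativity of~\eqref{equ:iso_G2mHmZ__B}. Note that this route bypasses any direct use of the unwieldy multiplication formula~\eqref{equ:mult_in_GHZ}.

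The core computation is verifying $\mu^{-1}\xi\mu=\bhom$, and since $\xi$ is a homomorphism it suffices to check this factor by factor. On the $\Ggrp$-factors, $\xi(\xi^{-i}(\aeli{i}))=\xi^{-(i-1)}(\aeli{i})\in\Ggrp_{i-1\bmod 2\mel}$, so $\aeli{i}$ moves to slot $i-1$, exactly the cyclic left shift performed by $\bhom$ on the $\Ggrp$-coordinates; here condition~\ref{enum:lm:3x3_G_H_gamma_m:char:g2m_commutes_with_L} ensures the index is well defined modulo $2\mel$. The same cancellation handles the $\Hgrp$-factors whenever $j\geq1$. The one delicate case is $j=0$: then $\xi(\beli{0})\in\xi(\Hgrp)$, and using condition~\ref{enum:lm:3x3_G_H_gamma_m:char:gm_H_H} I will rewrite $\xi=\xi^{-(\mel-1)}\circ\xi^{\mel}$ on $\Hgrp$, obtaining $\xi(\beli{0})=\xi^{-(\mel-1)}(\gamma(\beli{0}))\in\Hgrp_{\mel-1}$. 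In coordinates this transports $\beli{0}$ to slot $\mel-1$ and replaces it by $\gamma(\beli{0})$, which is precisely the last entry of~\eqref{equ:Aut_in_WrGH_gamma_m}. This ``wrap-around'' step is the unique place where $\gamma$ materialises and is the main technical point of the argument.

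For the final part of the statement, the equality $\theta(\Pgrp^{2\mel}\times\Qgrp^{\mel}\times 2\mel\bZ)=\Agrp$ will follow from two observations. First, because $\Ggrp_i$ and $\Hgrp_j$ are direct factors of $\Lgrp$, the subgroups $\Pgrp_i\subset\Ggrp_i$ and $\Qgrp_j\subset\Hgrp_j$ pairwise commute and have trivial pairwise intersections; hence the subgroup they generate inside $\Kgrp$, which equals $\Kgrp$ itself by~\ref{enum:lm:3x3_G_H_gamma_m:char:K_gen_Pi_Qj}, is their internal direct product, so $\mu$ restricts to an isomorphism $\Pgrp^{2\mel}\times\Qgrp^{\mel}\xrightarrow{\cong}\Kgrp$. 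Second, by~\ref{enum:lm:3x3_G_H_gamma_m:char:b_2m} every $a\in\Agrp$ satisfies $\eta(a)=2\mel k$ for some $k\in\bZ$, whence $a\gel^{-2\mel k}\in\Agrp\cap\Lgrp=\Kgrp$ and therefore $\Agrp=\Kgrp\cdot\langle\gel^{2\mel}\rangle$. Combining these two facts with the already established description of $\theta$ gives the required image. The resulting isomorphism of $(3\times 3)$-diagrams is then inherited from~\eqref{equ:iso_G2mHmZ__B} together with the canonical projection $\bZ\epiArrow\bZ_{2\mel}$, and the restriction to the identity on the bottom row is automatic. The principal obstacle I anticipate is not any single algebraic step but the bookkeeping of indices: one must track which power of $\xi$ is applied when lifting elements back to $\Ggrp$ or $\Hgrp$, and it is only condition~\ref{enum:lm:3x3_G_H_gamma_m:char:g2m_commutes_with_L} that makes these indices well-defined modulo $2\mel$.
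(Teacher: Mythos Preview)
Your proposal is correct and follows essentially the same strategy as the paper: both reduce, via Lemmas~\ref{lm:char_B_rtimes_Z} and~\ref{lm:smd_iso_Z}, to checking the intertwining relation $\xi\circ\mu=\mu\circ\bhom$ on $\Ggrp^{2\mel}\times\Hgrp^{\mel}$, and then handle the second part by decomposing $\Agrp$ as $\Kgrp\cdot\langle\gel^{2\mel}\rangle$ with $\Kgrp$ the internal direct product of the $\Pgrp_i$ and $\Qgrp_j$. Your factor-by-factor verification (singling out the $j=0$ wrap-around as the only nontrivial case) is a slightly cleaner presentation of exactly the same computation the paper carries out in one block.
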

\begin{proof}
First note that due to~\ref{enum:lm:3x3_G_H_gamma_m:char:L_GixHj} the product in round brackets in~\eqref{equ:theta_iso} does not depend on their order, and therefore the map $\theta$ is well-defined.

Recall further that $\WrGHZ{\Ggrp}{\Hgrp}{\gamma}{\mel}$ is the semidirect product $\smd{(\Ggrp^{2\mel}\times\Hgrp^{\mel})}{\bhom}{\bZ}$ corresponding to the isomorphism $\bhom$ given by~\eqref{equ:Aut_in_WrGH_gamma_m}.
Also, by Lemma~\ref{lm:char_B_rtimes_Z}, $\Bgrp$ is isomorphic to $\smd{\Lgrp}{\xi}{\bZ}$.
Hence, due to Lemma~\ref{lm:smd_iso_Z}, for the proof that $\theta$ is an isomorphism, it suffices to check that $\theta\circ\bhom = \xi\circ\theta$ on $\Lgrp$.
Indeed, since $\gamma=\xi^{\mel}$ we have that
\begin{align*}
    &\theta\circ\bhom(\aeli{0},\ldots,\aeli{2\mel-1},\beli{0},\ldots, \beli{\mel-1}) =
    \theta(\aeli{1},\ldots, \aeli{2\mel-1}, \aeli{0}; \,   
           \beli{1},\ldots, \beli{\mel-1},  \gamma(\beli{0})) = \\
    &\quad =
    \prod_{i=1}^{2\mel-1} \xi^{-(i-1)} (\aeli{i})  \ \cdot \
    \xi^{-(2\mel-1)}(\aeli{0}) \ \cdot \
    \prod_{j=1}^{\mel-1} \xi^{-(j-1)} (\beli{j})   \ \cdot \
    \xi^{-(\mel-1)+\mel}(\beli{0}) = \\
    &\quad =
    \prod_{i=0}^{2\mel-1} \xi^{-i+1} (\aeli{i})  \ \cdot \
    \prod_{j=0}^{\mel-1} \xi^{-j+1} (\beli{j}) = \xi \Bigl(
           \prod_{i=0}^{2\mel-1} \xi^{-i} (\aeli{i})  \ \cdot \
           \prod_{j=0}^{\mel-1} \xi^{-j} (\beli{j})
        \Bigr) = \\
    &\quad = \xi\circ\theta(\aeli{0},\ldots,\aeli{2\mel-1},\beli{0},\ldots, \beli{\mel-1}).
\end{align*}

Suppose~\ref{enum:lm:3x3_G_H_gamma_m:char:b_2m} and~\ref{enum:lm:3x3_G_H_gamma_m:char:K_gen_Pi_Qj} hold.
Then $\Agrp$ splits into the direct product of subgroups $\Kgrp \times \langle\gel^{2\mel}\rangle$.
Moreover, by~\ref{enum:lm:3x3_G_H_gamma_m:char:L_GixHj}, $\Kgrp$ is a direct product $\myprod_{i=0}^{2\mel-1}\Pgrp_i \times \myprod_{j=0}^{\mel-1}\Qgrp_j$ of its subgroups~\eqref{equ:subgroups_Pi_Qj}.
It now follows from~\eqref{equ:theta_iso} that $\theta(\Pgrp^{\mel}\times\Qgrp^{\mel}\times2\mel\bZ)$ is exactly
$\myprod_{i=0}^{2\mel-1}\Pgrp_i \times \myprod_{j=0}^{\mel-1}\Qgrp_j \times \langle\gel^{2\mel}\rangle = \Agrp$.
\end{proof}

\newcommand\mycaption[3]{\caption{#1, \\
\centerline{$#2$}}
}

\section{Codimension of singularities}\label{sect:preliminaries}
In this Section we give a short and elementary proof of finiteness of Milnor numbers of homogeneous polynomials $\gfunc\in\bR[x,y]$ without multiple factors, (Lemma~\ref{lm:axH_fin_codim}), which is a principal property allowing to compute the homotopy types of orbits of maps from $\FSp{\Mman}$.
In fact, that statement follows from general results of algebraic geometry, see Lemma~\ref{lm:char_codim_complex}, and is used in mentioned above papers, e.g.\ \cite{Maksymenko:TA:2020}, however the authors did not find its explicit proof in available literature.

We will also discuss structure of level-sets of isolated critical points on surfaces.
These results will not be directly used in the proofs of main results and may be skipped on first reading.

\subsection{Maps of finite codimension}\label{sect:finite_codimension}
Let $\CoRn$ be the algebra of germs at the origin $\orig\in\bR^n$ of $\Cinfty$ functions $\func:\bR^n\to\bR$.
For $\func\in\CoRn$ denote by $\JidR{\func}$ the ideal in $\CoRn$ generated by germs of partial derivatives of $\func$.
Then the real codimension of $\JidR{\func}$ in $\CoRn$:
\begin{equation}\label{equ:MilnorNumber}
    \milnorNum{\bR}{\func}:= \dim_{\bR} \bigl( \CoRn / \JidR{\func} \bigr)
\end{equation}
is called the \term{codimension} or \term{Milnor number} or \term{multiplicity} of $\func$ at the critical point $\orig$ with respect to the algebra $\CoRn$.

The well-known Tougeron theorem~\cite{Tougeron:AIF:1968} claims that \term{if $\orig$ is a critical point of a finite codimension $k$ of a $\Cinfty$ germ $\func:(\bR^{n},\orig)\to(\bR,\orig)$, then $\func$ is $\Cinfty$ equivalent to its Taylor polynomial of order $k+1$}.

It is easy to see that $\milnorNum{\bR}{\func}$ does not depend on local coordinates at $\orig\in\bR^n$, and therefore it can be defined for $\Cinfty$ functions on manifolds and even for $\Cinfty$ circle-valued maps.

Now let $\Mman$ be a smooth compact manifold and $\Pman$ be either $\bR$ or $\Circle$.
Say that a map $\func\in\Cid{\Mman}{\Pman}$ is of \term{finite codimension} if it has only finitely many critical points and at each of them $\func$ has finite codimension~\eqref{equ:MilnorNumber}.

\begin{theorem}[{\rm\cite{Sergeraert:ASENS:1972, Maksymenko:AGAG:2006}}]
\label{th:maps_of_fin_codim}
Let $\func\in\Cid{\Mman}{\Pman}$ be a map of finite codimension and $\Yman$ be any collection of boundary components of $\Mman$.
Then
\begin{enumerate}[label={\rm(\alph*)}, leftmargin=*]
\item\label{enum:lm:maps_of_fin_codim:Of}
the corresponding orbit $\Orbit{\func,\Yman}$ is a Fr\'{e}chet submanifold of finite codimension of the Fr\'{e}chet manifold $\Cid{\Mman}{\Pman}$,
\item\label{enum:lm:maps_of_fin_codim:DY}
the natural map $\nu:\Diff(\Mman,\Yman)\to \Orbit{\func,\Yman}$, $\dif \mapsto \func\circ\dif$, is a principal locally trivial fibration with fiber $\Stabilizer{\func,\Yman}$;
\item\label{enum:lm:maps_of_fin_codim:DIdY}
$\nu(\DiffId(\Mman,\Yman))=\OrbitComp{\func}{\func,\Yman}$ and the restriction map $\nu:\DiffId(\Mman,\Yman)\to \OrbitComp{\func}{\func,\Yman}$, $\dif \mapsto \func\circ\dif$, is also principal locally trivial fibration with fiber $\StabilizerIsotId{\func,\Yman} := \Stabilizer{\func,\Yman} \cap \DiffId(\Mman,\Yman)$.
\end{enumerate}
\end{theorem}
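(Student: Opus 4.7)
The plan is to follow the Sergeraert–Maksymenko strategy, proving all three statements simultaneously by constructing a smooth local section of $\nu$ over a neighbourhood of $\func$ in $\Orbit{\func,\Yman}$. From such a section, statements~\ref{enum:lm:maps_of_fin_codim:Of}--\ref{enum:lm:maps_of_fin_codim:DIdY} will follow by standard manipulations.

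First I would analyse the formal tangent space to the orbit at $\func$. Differentiating the action $(\dif,\func)\mapsto\func\circ\dif$ along a path of diffeomorphisms fixed on $\Yman$ yields a continuous linear map
$$D\nu_{\id}:\Gamma(T\Mman;\Yman)\to\Cid{\Mman}{\Pman},\qquad V\mapsto d\func(V),$$
where $\Gamma(T\Mman;\Yman)$ is the Fréchet space of smooth vector fields on $\Mman$ vanishing on $\Yman$ (and tangent to the remaining boundary). The crucial observation is that the finite-codimension hypothesis, together with the absence of critical points on $\partial\Mman$, implies that $\IM(D\nu_{\id})$ has \emph{finite codimension} in the affine subspace of $\Cid{\Mman}{\Pman}$ of maps agreeing with $\func$ on $\Yman$. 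Near a critical point $z$ this is the algebraic fact $\dim_{\bR}(\CoRtwo/\JidR{\gfunc_z})=\milnorNum{\bR}{\gfunc_z}<\infty$, which by a Malgrange–Tougeron type division translates into finite codimension of $\{d\gfunc_z(V):V\in\CoRtwo^2\}$ inside $\CoRtwo$; away from critical points $d\func$ is a submersion onto $\Pman$ and $V\mapsto d\func(V)$ is locally surjective. A partition of unity argument then glues these local statements into a global finite-codimension assertion.

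Second, I would pick a finite-dimensional complement $T\subset\Cid{\Mman}{\Pman}$ of $\IM(D\nu_{\id})$ and consider the map
$$\Phi:T\times\Diff(\Mman,\Yman)\to\Cid{\Mman}{\Pman},\qquad \Phi(t,\dif)=(\func+t)\circ\dif.$$
Its derivative at $(0,\id)$ is $(t,V)\mapsto t+d\func(V)$, which is surjective onto the slice $\{\gfunc:\gfunc|_{\Yman}=\func|_{\Yman}\}$ by construction of $T$. Sergeraert's implicit function theorem in the tame Fréchet category~\cite{Sergeraert:ASENS:1972} then provides a local inverse of $\Phi$ near $(0,\id)$. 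Projecting this inverse on the $T$-factor exhibits $\Orbit{\func,\Yman}$ locally as the zero set of a smooth submersion onto $T$, proving~\ref{enum:lm:maps_of_fin_codim:Of}; projecting on the $\Diff(\Mman,\Yman)$-factor and restricting to $\{t=0\}$ produces a smooth local section $\sigma:\UU\to\Diff(\Mman,\Yman)$ of $\nu$ over a neighbourhood $\UU$ of $\func$ in $\Orbit{\func,\Yman}$ with $\sigma(\func)=\id$.

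Third, the section globalises to yield principal fibration structure. The map $\UU\times\Stabilizer{\func,\Yman}\to\nu^{-1}(\UU)$, $(\gfunc,\dif)\mapsto\sigma(\gfunc)\circ\dif$, is a $\Stabilizer{\func,\Yman}$-equivariant local trivialisation; transporting it by right multiplication with arbitrary $\dif_0\in\Diff(\Mman,\Yman)$ gives trivialisations over a neighbourhood of every point of $\Orbit{\func,\Yman}$, proving~\ref{enum:lm:maps_of_fin_codim:DY}. For~\ref{enum:lm:maps_of_fin_codim:DIdY}, since $\sigma(\func)=\id$ and $\DiffId(\Mman,\Yman)$ is open in $\Diff(\Mman,\Yman)$, after shrinking $\UU$ we may assume $\sigma(\UU)\subset\DiffId(\Mman,\Yman)$; the image of the resulting restricted section consists precisely of those $\gfunc$ joined to $\func$ by a path $\func\circ\dif_s$ with $\dif_s\in\DiffId(\Mman,\Yman)$, i.e.\ lies in $\OrbitComp{\func}{\func,\Yman}$, and the fibre over such $\gfunc$ is the right coset $\sigma(\gfunc)\cdot\StabilizerIsotId{\func,\Yman}$. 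The principal analytic obstacle will be the implicit function step itself: Banach-space arguments do not apply and one needs tame Fréchet estimates together with uniform control of the division solving $d\func(V)=\psi$ modulo $T$—precisely what the finite-codimension hypothesis buys us, and what was worked out in~\cite{Sergeraert:ASENS:1972,Maksymenko:AGAG:2006}.
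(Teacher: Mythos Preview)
Your proposal is correct and follows essentially the same Sergeraert--Maksymenko strategy that the paper invokes: the paper's ``proof'' consists only of remarks pointing to~\cite{Sergeraert:ASENS:1972, Maksymenko:AGAG:2006} and noting the minor modifications needed (writing the local decomposition at each critical point with a finite-dimensional complement to the Jacobi ideal, and observing that the relevant vector fields vanish on $\Yman$), while you have spelled out the underlying tame-Fr\'echet implicit-function argument in more detail. Your derivation of~\ref{enum:lm:maps_of_fin_codim:DIdY} from~\ref{enum:lm:maps_of_fin_codim:DY} by restricting the section to the identity component matches the paper's one-line remark that~\ref{enum:lm:maps_of_fin_codim:DIdY} ``directly follows from~\ref{enum:lm:maps_of_fin_codim:DY}''.
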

\begin{proof}[Remarks to the proof]
Note that~\ref{enum:lm:maps_of_fin_codim:DIdY} directly follows from~\ref{enum:lm:maps_of_fin_codim:DY}.
For the statements~\ref{enum:lm:maps_of_fin_codim:Of} and~\ref{enum:lm:maps_of_fin_codim:DY} consider first the so-called \term{left-right} action
\[
    \Diff(\bR)\times\Ci{\Mman}{\bR}\times\Diff(\Mman) \to \Ci{\Mman}{\bR},
    \qquad
    (\phi,\func,\dif) \mapsto \phi\circ\func\circ\dif,
\]
of the group $\Diff(\bR)\times\Diff(\Mman)$ on $\Ci{\Mman}{\bR}$.
F.~Sergeraert~\cite{Sergeraert:ASENS:1972} proved that if $\func\in\Ci{\Mman}{\bR}$ is a smooth function \term{of finite codimension}, then the corresponding orbit
\[
    \widetilde{\mathcal{O}}(\func)
      = \{
            \phi\circ\func\circ\dif \mid (\phi,\dif)\in\Diff(\bR)\times\Diff(\Mman)
        \}
\]
is a Fr\'{e}chet submanifold of finite codimension of the Fr\'{e}chet space $\Ci{\Mman}{\bR}$, while
the natural map $\nu:\Diff(\bR)\times\Diff(\Mman)\to \widetilde{\mathcal{O}}(\func)$, $(\phi,\dif) \mapsto \phi\circ\func\circ\dif$, is a principal locally trivial fibration.

In~\cite[Theorem~11.7]{Maksymenko:AGAG:2006} such a result was extended to ``tame actions of tame Lie groups on tame Fr\'{e}chet manifolds'', where \term{tameness} is understood in the sense of R.~Hamilton~\cite{Hamilton:BAMS:1982}.
In particular, this implied that~\ref{enum:lm:maps_of_fin_codim:Of} and~\ref{enum:lm:maps_of_fin_codim:DY} hold for $\func$ being a Morse map and $\Yman=\varnothing$, see~\cite[Section 11.30]{Maksymenko:AGAG:2006}.

However, almost literally the same arguments show that the same result holds for the above map $\nu:\Diff(\Mman,\Yman)\to\Orbit{\func,\Yman}$, where $\Yman$ is any (possibly empty) collection of boundary components of $\Mman$ and $\func:\Mman\to\Pman$ is a map of finite codimension.
One should just
\begin{itemize}[label=$- $, leftmargin=*]
\item
write in Eq.~(2) of~\cite[Section~11.30]{Maksymenko:AGAG:2006} $g(x)$ as 
\[ g(x) = \sum_{i=1}^{k} c_i \lambda_i(x) + \sum \beta_i(x) \gfunc'_{x_i}(x),\]
where $c_i\in\bR$ are some constants, and $\lambda_i$ some $\Cinfty$ functions which span the (finite-dimensional) complement to the Jacobi ideal of $\func$ at $z_i$;
\item
note that in Eq.~(3) of~\cite[Section~11.30]{Maksymenko:AGAG:2006} the vector field $H_j$ vanishes on the boundary component $B_i$, and thus belongs to the tangent space at $\id_{\Mman}$ of the group $\Diff(\Mman,\Yman)$ not only of $\Diff(\Mman)$.
\qedhere
\end{itemize}
\end{proof}

\subsection{Finiteness of Milnor number}
Denote by $\holoCn$ the $\bC$-algebra of germs of analytic maps $\bC^n\to\bC$ at $\orig\in\bC^2$, i.e.\ maps represented by series covering on some neighborhood of $\orig\in\bC^n$.
In particular, we can regard the ring of polynomials $\bC[z_1,\ldots,z_n]$ as a subalgebra of $\holoCn$.
Let also 
\[\maxId = \{ \func\in \holoCn\mid \func(\orig)=0\}\]
be the unique maximal ideal in $\holoCn$.
Note that for each $\func\in\holoCn$ one can define its Jacobi ideal $\JidC{\func} = (\func'_{z_1},\ldots,\func'_{z_n})$ in $\holoCn$ generated by partial derivatives of $\func$.
Its codimension, $\milnorNum{\bC}{\func}:=\dim_{\bC}\bigl(\holoCn/\JidC{\func} \bigr)$, is called the \term{Milnor number} of $\func$ with respect to $\holoCn$.

\begin{lemma}\label{lm:char_codim_complex}
For $\func\in\holoCn$ the following conditions are equivalent:
\begin{enumerate}[label={\rm$(\mu\arabic*)$}]
\item\label{enum:miln:zim_in_Jid} there exists $m\geq1$ such that $z_i^m\in\JidC{\gfunc}$ for each $i=1,\ldots,n$;
\item\label{enum:miln:monoms} there exists $p\geq1$ such that $z_1^{a_1}\cdots z_n^{a_n}\in\JidC{\gfunc}$ if $a_1+\cdots+a_n \geq p$;
\item\label{enum:miln:mp} there exists $p\geq1$ such that $\maxId^{p} \subset \JidC{\gfunc}$;
\item\label{enum:miln:isol} $0\in\bC^n$ is an isolated critical point of $\func$;
\item\label{enum:miln:finite} $\milnorNum{\bC}{\func} < \infty$.
\end{enumerate}
\end{lemma}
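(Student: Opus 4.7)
The plan is to split the five conditions into a purely combinatorial/algebraic block $\{$\ref{enum:miln:zim_in_Jid}, \ref{enum:miln:monoms}, \ref{enum:miln:mp}, \ref{enum:miln:finite}$\}$, and then connect this block to the geometric condition~\ref{enum:miln:isol} via the analytic Nullstellensatz. Concretely, I would first dispose of the easy equivalences \ref{enum:miln:zim_in_Jid}$\,\Leftrightarrow\,$\ref{enum:miln:monoms}$\,\Leftrightarrow\,$\ref{enum:miln:mp}. The implication \ref{enum:miln:monoms}$\Rightarrow$\ref{enum:miln:zim_in_Jid} is immediate (take $a_i=p$, others zero), and \ref{enum:miln:monoms}$\,\Leftrightarrow\,$\ref{enum:miln:mp} is just the observation that $\maxId^{p}$ is generated as an ideal by the monomials of total degree $\geq p$. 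For \ref{enum:miln:zim_in_Jid}$\Rightarrow$\ref{enum:miln:monoms}, a pigeonhole argument suffices: if every $z_i^{m}\in\JidC{\gfunc}$ and $a_1+\cdots+a_n\geq n(m-1)+1$, then some $a_i\geq m$, so $z_1^{a_1}\cdots z_n^{a_n}$ is divisible by $z_i^{m}$ and hence lies in $\JidC{\gfunc}$; thus $p := n(m-1)+1$ works.

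Next I would link \ref{enum:miln:mp} with \ref{enum:miln:finite}. For \ref{enum:miln:mp}$\Rightarrow$\ref{enum:miln:finite}, the quotient $\holoCn/\JidC{\gfunc}$ is a quotient of $\holoCn/\maxId^{p}$, and the latter has $\bC$-basis given by the (finitely many) monomials of degree $<p$, so $\milnorNum{\bC}{\gfunc}<\infty$. For \ref{enum:miln:finite}$\Rightarrow$\ref{enum:miln:mp}, I would use that $\holoCn$ is a local ring with maximal ideal $\maxId$, hence $R:=\holoCn/\JidC{\gfunc}$ is local with maximal ideal $\overline{\maxId}:=\maxId/\JidC{\gfunc}$. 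Since $R$ is finite-dimensional over $\bC$, the descending chain $\overline{\maxId}\supset\overline{\maxId}^{2}\supset\cdots$ stabilizes, so $\overline{\maxId}^{p+1}=\overline{\maxId}^{p}$ for some $p$. Applying Nakayama's lemma to the finitely generated $R$-module $\overline{\maxId}^{p}$ gives $\overline{\maxId}^{p}=0$, i.e.\ $\maxId^{p}\subset\JidC{\gfunc}$.

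It remains to tie in \ref{enum:miln:isol}. The implication \ref{enum:miln:mp}$\Rightarrow$\ref{enum:miln:isol} is geometric and easy: if $\maxId^{p}\subset\JidC{\gfunc}$, then any common zero of the partial derivatives $\gfunc'_{z_1},\ldots,\gfunc'_{z_n}$ near $\orig$ is also a zero of every element of $\maxId^{p}$, in particular of each $z_i^{p}$, hence equals $\orig$. For the converse \ref{enum:miln:isol}$\Rightarrow$\ref{enum:miln:zim_in_Jid} I expect to invoke the R\"uckert/Hilbert Nullstellensatz for the analytic local ring $\holoCn$, which states that for any ideal $I\subset\holoCn$ one has $\sqrt{I}=\mathcal{I}(V(I))$, where $V(I)$ is the germ of the common zero locus and $\mathcal{I}(\cdot)$ is the ideal of germs vanishing on it. Assuming $\orig$ is an isolated zero of $\JidC{\gfunc}$, the germ $V(\JidC{\gfunc})$ is $\{\orig\}$, so $\mathcal{I}(V(\JidC{\gfunc}))=\maxId$, which yields $z_i\in\sqrt{\JidC{\gfunc}}$ and therefore $z_i^{m_i}\in\JidC{\gfunc}$ for some $m_i$; taking $m=\max_i m_i$ gives~\ref{enum:miln:zim_in_Jid}.

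The only step requiring serious input is \ref{enum:miln:isol}$\Rightarrow$\ref{enum:miln:zim_in_Jid}, which depends on the analytic Nullstellensatz; I would simply cite it (e.g.\ Grauert--Remmert or Gunning--Rossi). Everything else is elementary, so the main obstacle is pedagogical rather than mathematical: presenting the Nakayama step cleanly and making it evident that the statement is really packaging four layers (combinatorial, ideal-theoretic, dimension-theoretic, and geometric) of the same finiteness phenomenon. I would arrange the proof as the cycle \ref{enum:miln:zim_in_Jid}$\Rightarrow$\ref{enum:miln:monoms}$\Rightarrow$\ref{enum:miln:mp}$\Rightarrow$\ref{enum:miln:finite}$\Rightarrow$\ref{enum:miln:mp} (via Nakayama) together with \ref{enum:miln:mp}$\Rightarrow$\ref{enum:miln:isol}$\Rightarrow$\ref{enum:miln:zim_in_Jid}, which closes the loop with minimal duplication.
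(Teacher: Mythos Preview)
Your argument is correct. The paper's proof takes a slightly different route for the non-elementary parts: it handles \ref{enum:miln:zim_in_Jid}$\Leftrightarrow$\ref{enum:miln:monoms}$\Leftrightarrow$\ref{enum:miln:mp} and \ref{enum:miln:mp}$\Rightarrow$\ref{enum:miln:finite} essentially as you do, but for the remaining implications it simply cites Greuel--Lossen--Shustin (their Lemma~2.3 and Proposition~1.70), taking \ref{enum:miln:isol}$\Leftrightarrow$\ref{enum:miln:finite} as a black box and extracting \ref{enum:miln:isol}$\Rightarrow$\ref{enum:miln:mp} from the proof of that proposition. Your version is more self-contained in two respects: you close the algebraic block directly with the Nakayama step \ref{enum:miln:finite}$\Rightarrow$\ref{enum:miln:mp} (which the paper never isolates), and you handle \ref{enum:miln:isol}$\Rightarrow$\ref{enum:miln:zim_in_Jid} by invoking the analytic Nullstellensatz explicitly rather than via a second-hand reference. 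The trade-off is that your proof still depends on one external input (R\"uckert's Nullstellensatz), while the paper outsources the same content to a textbook; your organization has the advantage of making it transparent exactly which single step requires nontrivial machinery.
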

\begin{proof}
The equivalence \ref{enum:miln:zim_in_Jid}$\Leftrightarrow$\ref{enum:miln:monoms}$\Leftrightarrow$\ref{enum:miln:mp} is evident.

\ref{enum:miln:mp}$\Rightarrow$\ref{enum:miln:finite}.
Note that for each $p\geq1$ the vector space $\holoCn/\maxId^{p}$ over $\bC$ is generated by \term{finitely many} monomials $\{z_1^{a_1}\cdots z_n^{a_n} \mid 0\leq a_1+\cdots a_n < p\}$.
Hence, if $\maxId^{p} \subset \JidC{\gfunc}$, then
\[
    \milnorNum{\bC}{\func} := \dim_{\bC}\bigl(\holoCn/\JidC{\func} \bigr)
                           \leq \dim_{\bC}\bigl(\holoCn/\maxId^{p}\bigr) < \infty.
\]

The equivalence~\ref{enum:miln:isol}$\Leftrightarrow$\ref{enum:miln:finite} is a principal non-trivial statement which can be found in~\cite[Lemma~2.3]{GreuelLossenShustin:Sing:2007}, which in turn is based on~\cite[Proposition~1.70]{GreuelLossenShustin:Sing:2007}.

\ref{enum:miln:isol}$\Rightarrow$\ref{enum:miln:mp}
This implication follows in fact from the proof of the implication (d)$\Rightarrow$(b) of that~\cite[Proposition~1.70]{GreuelLossenShustin:Sing:2007}.
Namely, in that proof it is actually shown that isolateness of $\orig$ implies that $\maxId^{p} \subset \JidC{\gfunc}$ for some $p\geq1$, i.e.\ that~\ref{enum:miln:isol} implies~\ref{enum:miln:mp}.
\end{proof}

\begin{corollary}\label{cor:fin_codim_real}
Let $\func\in\CoRn$ be an analytic germ, i.e.\ a series with real coefficients converging on some neighborhood of $\orig\in\bR^n$; for instance $\func$ can be a polynomial.
Regard $\func$ as complex series with real coefficients, i.e.\ as an element of $\holoCn$.
Then either of the conditions~\ref{enum:miln:zim_in_Jid}-\ref{enum:miln:finite} implies that $\milnorNum{\bR}{\func}<\infty$.
\end{corollary}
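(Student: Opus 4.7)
The plan is to use the equivalent condition~\ref{enum:miln:mp} from Lemma~\ref{lm:char_codim_complex}: there exists $p\geq1$ with $\maxId^{p}\subset\JidC{\func}$ in $\holoCn$, and then transfer this inclusion to the real setting by exploiting the reality of the Taylor coefficients of $\func$. Specifically, for every multi-index $\alpha$ with $|\alpha|\geq p$ the monomial $z^{\alpha}$ lies in $\JidC{\func}$, so one can write
\[
    z^{\alpha} \;=\; \sum_{i=1}^{n} \beta_i(z)\,\func'_{z_i}(z),
    \qquad \beta_i\in\holoCn.
\]
Since $\func$ has real Taylor coefficients, so does every $\func'_{z_i}$, and of course so does $z^{\alpha}$. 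Splitting each $\beta_i=u_i+\sqrt{-1}\,v_i$ into a sum of two series with real coefficients and equating the real parts of the coefficients on both sides yields a presentation with \emph{real} coefficients $u_i$,
\[
    z^{\alpha} \;=\; \sum_{i=1}^{n} u_i(z)\,\func'_{z_i}(z).
\]
Restricting this identity to $\bR^{n}$, the $u_i$ become real analytic, hence $\Cinfty$, germs at the origin, and we obtain $x^{\alpha}\in\JidR{\func}$ inside $\CoRn$.

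Next, by Hadamard's lemma the maximal ideal $\mathfrak{m}_{\bR}\subset\CoRn$ is generated by $x_1,\ldots,x_n$, and therefore $\mathfrak{m}_{\bR}^{p}$ is generated by the finite set of monomials $\{x^{\alpha}:|\alpha|=p\}$. The previous step then gives $\mathfrak{m}_{\bR}^{p}\subset\JidR{\func}$. On the other hand, Taylor's formula with smooth remainder shows that every germ in $\CoRn$ is congruent modulo $\mathfrak{m}_{\bR}^{p}$ to its Taylor polynomial of order $p-1$, so $\CoRn/\mathfrak{m}_{\bR}^{p}$ is spanned over $\bR$ by the finitely many classes of $x^{\alpha}$ with $|\alpha|<p$ and is finite dimensional. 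Combining these observations,
\[
    \milnorNum{\bR}{\func}
    \;=\; \dim_{\bR}\bigl(\CoRn/\JidR{\func}\bigr)
    \;\leq\; \dim_{\bR}\bigl(\CoRn/\mathfrak{m}_{\bR}^{p}\bigr)
    \;<\;\infty.
\]

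The only step that requires a moment of care is the complex-to-real passage in the second display: since the left-hand side $z^{\alpha}$ and each generator $\func'_{z_i}$ already have real Taylor coefficients, the naive operation of replacing $\beta_i$ by its ``real part'' $u_i$ automatically preserves the identity of power series. Every other ingredient (Hadamard's lemma, Taylor's theorem with smooth remainder, the equivalence of conditions~\ref{enum:miln:zim_in_Jid}--\ref{enum:miln:finite}) is standard, so I do not anticipate any genuine obstacle.
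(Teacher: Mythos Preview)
Your proposal is correct and follows essentially the same route as the paper's proof: both use a complex Jacobi-ideal membership, split the holomorphic coefficient functions into real and imaginary parts (at the level of Taylor coefficients), restrict to $\bR^n$, and then invoke Hadamard's lemma/Taylor's theorem with smooth remainder to bound the codimension. The only cosmetic difference is that the paper starts from condition~\ref{enum:miln:zim_in_Jid} (showing $x_i^m\in\JidR{\func}$ for each $i$) while you start from~\ref{enum:miln:monoms}/\ref{enum:miln:mp} (showing all $x^\alpha$ with $|\alpha|\geq p$ lie in $\JidR{\func}$); these are equivalent and your version is arguably slightly more direct.
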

\begin{proof}
Let $\phi\in\CoRn$.
Then by Hadamard lemma there are $\Cinfty$ germs $\alpha_1,\ldots,\alpha_n\in\CoRn$ and $b_0=\phi(\orig)$ such that $\phi(\bx) = b_0 + \sum_{j=1}^{n} \beta_j(\bx)x_j$, where $\bx=(x_1,\ldots,x_n)\in\bR^n$.
Applying the same statement to each $\beta_j$ and so on we get that for each $p$ we have the identity
\begin{equation}\label{equ:phi_decomp}
\phi(x,y) = \sum_{0\leq a_1+\cdots+a_n < p} b_{ij} x_1^{a_1}\cdots x_{n}^{a_n} +
\sum_{a_1+\cdots+a_n  = p} \beta_{a_1,\ldots,a_n}(\bx) x_1^{a_1}\cdots x_{n}^{a_n}
\end{equation}
for some $b_{ij}\in\bR$ and $\beta_{a_1,\ldots,a_n}\in\CoRn$.
Hence, as in the proof of \ref{enum:miln:mp}$\Rightarrow$\ref{enum:miln:finite} in Lemma~\ref{lm:char_codim_complex}, in order to prove that $\milnorNum{\bR}{\func}<\infty$, \term{it suffices to show that there exists $m\geq1$ such that $x_i^m\in\JidR{\gfunc}$ for all $i=1,\ldots,n$}.
Then for some large $p$, the second term in~\eqref{equ:phi_decomp} will belong to $\JidR{\gfunc}$, whence the vector space $\CoRn/\JidR{\gfunc}$ over $\bR$ will be generated by a finite set of monomials of degree $< p$.

Regard $\bR^{n}$ as a subspace of $\bC^{n}$ of real coordinates.
By assumption $\func$, as an element of $\holoCn$, satisfies condition~\ref{enum:miln:zim_in_Jid}, so there exist $m\geq1$ and $\tau_1,\ldots,\tau_n\in\holoCn$ such that
\begin{equation}\label{equ:zim_decomp}
    z_i^m = \tau_1(\bz) \func'_{z_1}(\bz) + \cdots +\tau_n(\bz) \func'_{z_n}(\bz),
\end{equation}
for all $i=1,\ldots,n$, where $\bz = (z_1,\ldots,z_n)\in\bC^n$.
Let $\tau_j = \alpha_j + i \beta_j$ be the decomposition of $\tau_j$ into the real and imaginary parts.
Then $\alpha_j$ and $\beta_j$ are converging series near $\orig$ in $\bC^n$ with real coefficients, and $\alpha_j|_{\bR^{n}}, \beta_j|_{\bR^{n}} \in \CoRn$.
Since $\func$ has real coefficients, we also have that $\func'_{z_j}(\bx) = \func'_{x_i}(\bx)$ for all $j$.
Now taking the real parts of both sides of~\eqref{equ:zim_decomp} and restricting them to the real subspace $\bR^n \subset \bC^{n}$, i.e.\ substituting $\bx$ instead of $\bz$, we get that 
\[ 
    x_i^m = \alpha_j(\bx) \func'_{x_1}(\bx) + \cdots +\alpha_n(\bx) \func'_{x_n}(\bx) \in \JidR{\func}.
    \qedhere
\]
\end{proof}

\subsection{Homogeneous polynomials in two variables}
Let $\gfunc:\bR^2\to\bR$ be a real homogeneous polynomial with $\deg\gfunc\geq2$.
Then it directly follows from fundamental theorem of algebra that $\gfunc$ splits into a product of finitely many linear and irreducible over $\bR$ quadratic factors:
\begin{equation}\label{equ:g_prod_L_Q}
    \gfunc(x,y) = \prod_{i=1}^{p} (a_i x + b_i y) \cdot \prod_{j=1}^{q}(c_j x^2 + 2d_j xy + e_j y^2).
\end{equation}
Evidently,
\begin{enumerate}[leftmargin=*]
\item
$\gfunc^{-1}(0)$ is a union of straight lines $\{ a_i x + b_i y = 0 \}$ corresponding to the linear factors;

\item
if $\gfunc$ has two proportional linear factors, then all points of the corresponding line are critical for $\gfunc$;

\item
hence, $\gfunc$ has no multiple (non-proportional) \term{linear} factors if and only if the origin $\orig\in\bR^2$ is an isolated critical point;
in this case $\gfunc^{-1}(0)$ consists of $2p$ rays starting from the origin;

\item
the following conditions are equivalent due to Lemma~\ref{lm:char_codim_complex}:
\begin{enumerate}[label={\rm(4\alph*)}]
\item\label{enum:g:no_multi_factors} $\gfunc$ has no multiple factors at all;
\item the partial derivatives $\gfunc'_x$ and $\gfunc'_x$ of $\gfunc$ have no common factors;
\item the origin $\orig\in\bC^2$ is an isolated critical point of $\gfunc:\bC^2\to\bC$ as a complex polynomial with real coefficients;
\item
$\milnorNum{\bC}{\gfunc}<\infty$.
\end{enumerate}
Moreover, by Corollary~\ref{cor:fin_codim_real}, they also imply that $\milnorNum{\bR}{\gfunc}<\infty$.
\end{enumerate}
For the completeness and for the convenience of the reader we present a short explicit and not based on Lemma~\ref{lm:char_codim_complex} proof that~\ref{enum:g:no_multi_factors} implies $\milnorNum{\bR}{\gfunc}<\infty$.

\begin{lemma}\label{lm:axH_fin_codim}
Suppose $\gfunc\in\bR[x,y]$ is a homogeneous polynomial without multiple factors and $\deg\gfunc\geq2$.
Then $x^m,y^m\in\JidR{\func}$ for some large $m\geq1$, and therefore $\milnorNum{\bR}{\gfunc} < \infty$.

In particular, if $\Mman$ is a compact surface, then each $\func\in\FSp{\Mman}$ is of finite codimension, and Theorem~\ref{th:maps_of_fin_codim} holds for $\func$.
\end{lemma}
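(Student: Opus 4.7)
My plan is to reduce the assertion to the elementary fact that two coprime homogeneous polynomials in $\bC[x,y]$ generate an ideal of finite codimension, and then transfer this back from $\bC$ to $\bR$ and from polynomials to germs.

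First I will factor $\gfunc$ over $\bC$. Since each irreducible real quadratic factor of $\gfunc$ splits into two \emph{distinct} complex-conjugate linear forms, the ``no multiple factors'' assumption persists over $\bC$: $\gfunc=\prod_{i=1}^{d}L_{i}$ with pairwise non-proportional linear forms $L_{i}=\alpha_{i}x+\beta_{i}y\in\bC[x,y]$, $d=\deg\gfunc\geq 2$. Using this factorization I will show that $(0,0)$ is the only common zero of $\gfunc'_{x}$ and $\gfunc'_{y}$ in $\bC^{2}$. Indeed, by Euler's identity $d\cdot\gfunc=x\gfunc'_{x}+y\gfunc'_{y}$, any common zero of the partials is a zero of $\gfunc$, hence lies on some line $\{L_{k}=0\}$; a direct differentiation of $\gfunc=L_{k}\cdot\prod_{j\neq k}L_{j}$ gives $\gfunc'_{x}=\alpha_{k}\prod_{j\neq k}L_{j}$ and $\gfunc'_{y}=\beta_{k}\prod_{j\neq k}L_{j}$ along $\{L_{k}=0\}$, and the product $\prod_{j\neq k}L_{j}$ is nonzero at any point of $\{L_{k}=0\}\setminus\{(0,0)\}$ thanks to pairwise non-proportionality. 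So forcing both partials to vanish at a nonzero point would force $\alpha_{k}=\beta_{k}=0$, which contradicts $L_{k}\neq 0$.

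Hilbert's Nullstellensatz applied in $\bC[x,y]$ then yields $(x,y)\subseteq\sqrt{(\gfunc'_{x},\gfunc'_{y})}$, so there exists $m\geq 1$ with $x^{m},y^{m}\in(\gfunc'_{x},\gfunc'_{y})\cdot\bC[x,y]$. Since $\gfunc'_{x},\gfunc'_{y}$ already lie in $\bR[x,y]$, writing $x^{m}=\tau_{1}\gfunc'_{x}+\tau_{2}\gfunc'_{y}$ with $\tau_{i}\in\bC[x,y]$ and taking real parts gives the same identity with $\RE(\tau_{i})\in\bR[x,y]$; similarly for $y^{m}$. Via the embedding $\bR[x,y]\subset\CoRtwo$ I will then have $x^{m},y^{m}\in\JidR{\gfunc}$ in $\CoRtwo$. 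Since every monomial $x^{a}y^{b}$ with $a+b\geq 2m$ lies in $(x^{m},y^{m})$, this gives $\maxId^{2m}\subset\JidR{\gfunc}$, and arguing as in Corollary~\ref{cor:fin_codim_real} (Hadamard expansion modulo $\maxId^{2m}$) concludes that $\milnorNum{\bR}{\gfunc}<\infty$.

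The ``in particular'' clause will follow immediately: each $\func\in\FSp{\Mman}$ on a compact surface has isolated critical points by axiom~\ref{axiom:Hm} and the absence of multiple factors in the local models, hence only finitely many, and at each of them $\func$ is $\Cinfty$-equivalent to a homogeneous $\gfunc$ covered by the main statement, so $\func$ is of finite codimension and Theorem~\ref{th:maps_of_fin_codim} applies. The main obstacle is the coprimeness/no-common-zero step of the previous paragraph; once it is established, a single invocation of the Nullstellensatz and the real-parts trick do the rest. The whole point of this route is that the homogeneity of $\gfunc$ makes the complete linear factorization over $\bC$ available, so that the deeper algebro-geometric equivalences of Lemma~\ref{lm:char_codim_complex} are not actually needed here.
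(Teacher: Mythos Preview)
Your argument is correct, but it takes a different route from the paper's. The paper's proof is deliberately \emph{Nullstellensatz-free}: it sets $A=\gfunc'_x$, $B=\gfunc'_y$, dehomogenizes to one variable by putting $\alpha(t)=A(1,t)$, $\beta(t)=B(1,t)\in\bR[t]$, observes that $\gcd(A,B)=1$ in $\bR[x,y]$ forces $\gcd(\alpha,\beta)=1$ in $\bR[t]$, and then uses the Euclidean algorithm to write $\alpha p+\beta q=1$; rehomogenizing gives $AP+BQ=x^{m}$ directly in $\bR[x,y]$. Your route instead verifies by hand that $(0,0)$ is the only common complex zero of the partials (via Euler's identity and the linear factorization), invokes the Nullstellensatz in $\bC[x,y]$, and then descends to $\bR$ by taking real parts. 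Both lead to $x^{m},y^{m}\in\JidR{\gfunc}$; the paper's approach is more elementary (only the one-variable Euclidean algorithm is used) and stays entirely over $\bR$, while yours is more conceptual but relies on the Nullstellensatz, which is precisely the kind of input the paper set out to avoid in this lemma. In effect, your proof is a streamlined, self-contained version of the route through Corollary~\ref{cor:fin_codim_real}: you replace the abstract equivalence \ref{enum:miln:isol}$\Leftrightarrow$\ref{enum:miln:zim_in_Jid} of Lemma~\ref{lm:char_codim_complex} by a direct verification of isolatedness plus one call to the Nullstellensatz.
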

\begin{proof}
Put $k = \deg\gfunc - 1$.
To simplify notation also redenote $A := \gfunc'_x$ and $B := \gfunc'_y$.
Then $A$ and $B$ are homogeneous polynomials of degree $k$.
As mentioned above, the assumption that $\gfunc$ has no multiple factors implies that $A$ and $B$ are relatively prime in $\bR[x,y]$.
We will find homogeneous polynomials $P,Q\in \bR[x,y]$ such that $A P + B Q = x^m$ for some $m\geq1$.
This will mean that $x^m\in\JidR{\func}$.
The proof that $y^m\in\JidR{\func}$ for some $m$ is similar.

Consider the following polynomials $\alpha(t) := A(1,t)$, $\beta(t) := B(1,t) \in \bR[t]$.
Since $A$ and $B$ are homogeneous of degree $k$, we have that
\begin{align*}
    A(x,y) &= \alpha(y/x) x^k, &
    B(x,y) &= \beta(y/x) x^k.
\end{align*}
Moreover, $\mathrm{gcd}(\alpha(t),\beta(t)) = 1$ in $\bR[t]$, since $\mathrm{gcd}(A,B)=1$ in $\bR[x,y]$.
In particular, using Euclid division algorithm, one can find polynomials $p,q\in\bR[t]$ such that
\begin{align}\label{equ:aipi_biqi_1}
    \alpha(t) p(t) + \beta(t) q(t) = 1.
\end{align}
This implies that $\deg (\alpha p) = \deg(\beta q)$, and we will denote this common degree by $m$.
It follows that $P:=p(y/x)x^{\deg p}$ and $Q:=q(y/x)x^{\deg q}$ are homogeneous polynomials in $\bR[x,y]$.
Multiplying~\eqref{equ:aipi_biqi_1} by $x^{m}$ and substituting $t=y/x$ we get:
\[
    \bigl(\alpha(y/x) x^{\deg\alpha}\bigr) \, \bigl( p(y/x) x^{\deg p} \bigr) \ + \
    \bigl(\beta(y/x)  x^{\deg\beta} \bigr) \, \bigl( q(y/x) x^{\deg q} \bigr) \ = \ x^{m},
\]
i.e.\ $A P + B Q = x^{m} \in\JidR{\func}$.

Now, let $\Mman$ be a compact surface and $\func\in\FSp{\Mman}$.
Then $\func$ has finitely many critical points and at each of them its germ is $\Cinfty$ equivalent to a homogeneous polynomial without multiple factors.
As just proved the Milnor number of $\func$ at each critical point is finite.
Therefore, $\func$ is also of finite codimension.
\end{proof}

\begin{remark}\label{rem:isol_sing}\rm
Let $\Mman$ be a $2$-dimensional manifold and $\func\in\Ci{\Mman}{\Pman}$.
Suppose $\pw\in\Int{\Mman}$ is an isolated critical point of $\func$.
Then there are germs of \term{homeomorphisms} $\dif:(\bC,0) \to (\Mman,\pw)$ and $\phi:(\bR,0) \to (\bR,0)$ such that
\begin{equation}\label{equ:isol_sing}
\phi \circ \func \circ \dif (z) =
\begin{cases}
|z|^2, & \text{if $0\in\bC$ is a \term{local extreme} of $\func$, \cite[Lemma~3]{Dancer:2:JRAM:1987}}, \\
Re(z^p), & \text{for some $p\geq1$ otherwise, \cite{ChurchTimourian:PJM:1973, Prishlyak:TA:2002}}.
\end{cases}
\end{equation}
Moreover, one can make $\dif$ to be $\Cinfty$ out of an arbitrary small neighborhood of $\pw$.
Figure~\ref{fig:isol_crit_pt} shows examples of level sets of smooth functions near isolated critical points.
\begin{figure}[htbp!]
	\centering
	\footnotesize
	\begin{tabular}{ccccccc}
		\includegraphics[height=1.2cm]{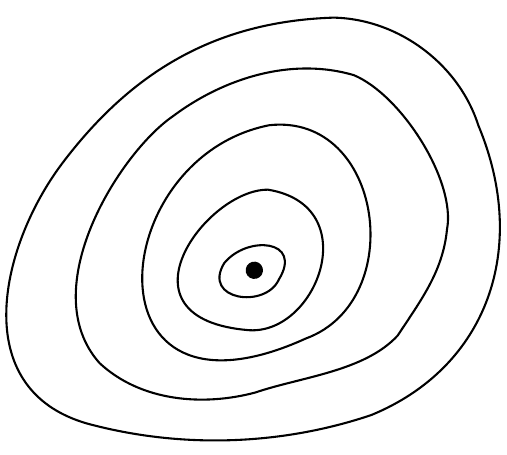}  & \ &
		\includegraphics[height=1.2cm]{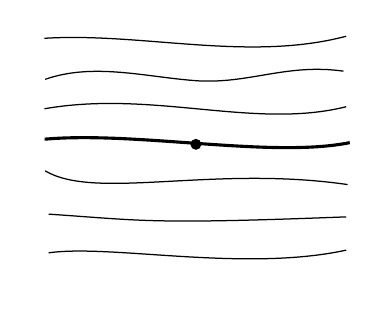}    & \ &
		\includegraphics[height=1.2cm]{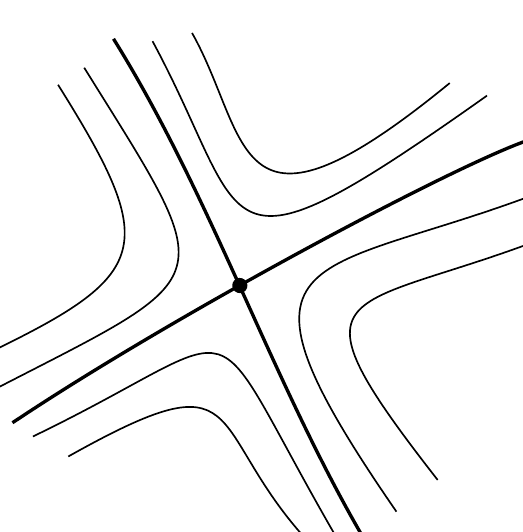}    & \ &
		\includegraphics[height=1.2cm]{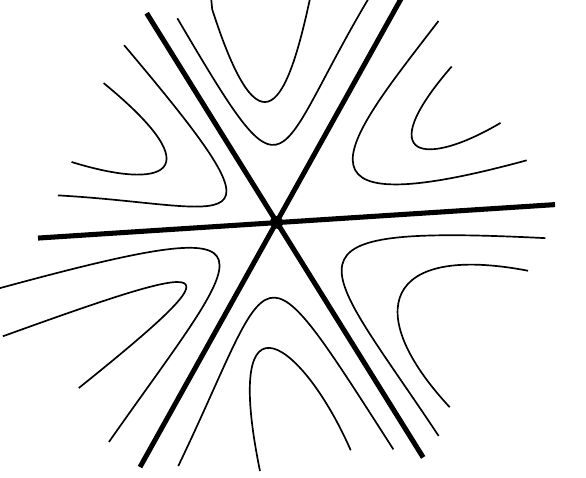}
	\end{tabular}
	\caption{Topological structure of level-sets of isolated singularities}
	\label{fig:isol_crit_pt}
\end{figure}
Note that every function in~\eqref{equ:isol_sing} is a homogeneous polynomial without multiple factors.
Hence, for any compact surface $\Mman$ the class $\FSp{\Mman}$ is not only ``massive'' but also contains ``up to homeomorphism'' each map $\func\in\Cid{\Mman}{\Pman}$ with isolated critical points.

It is worth to mention that a topological structure of functions with isolated critical points on compact $1$-manifolds was studied e.g. in~\cite{Arnold:UMN:1992}, and for $2$-manifolds e.g. in~\cite{Prishlyak:UMJ:2000, HladyshPrishlyak:JMPAG:2019}.
\end{remark}

\section{Functions on surfaces}\label{sect:func_on_surf}
Let $\Mman$ be a compact surface, $\func\in\FSp{\Mman}$, and $\fSing$ be the set of all critical points of $\func$.
Then every connected component of each level set of $\func$ will be called a \term{contour} of $\func$.
A contour $\Xman$ of $\func$ is \term{critical} if it contains critical points of $\func$, and \term{regular} otherwise.
Evidently, each regular contour is a submanifold of $\Mman$ diffeomorphic to the circle.

On the other hand, due to Axiom~\ref{axiom:Hm}, see also Figure~\ref{fig:isol_crit_pt}, a critical contour has a structure of a $1$-dimensional CW-complex whose $0$-cells are critical points of $\func$ belonging to $\Xman$, while the connected components of $\Xman\setminus\fSing$ being one-dimensional submanifolds of $\Mman$ are $1$-cells of $\Xman$.

Denote by $\KRGraphf$ the partition of $\Mman$ into contours of $\func$, and let $p:\Mman\to\KRGraphf$ be the quotient map associating to each $x\in\Mman$ the contour of $\func$ containing $x$.
Endow $\KRGraphf$ with the corresponding quotient topology with respect to $p$.
Then it is well known that $\KRGraphf$ has a structure of a finite $1$-dimensional CW-complex and is called \term{Kronrod-Reeb graph} of $\func$, \cite{Reeb:CR:1946, Adelson-Welsky-Kronrode:DAN:1945, Kronrod:UMN:1950}.

A subset $\Xman\subset\Mman$ will be called \term{$\func$-adapted} if it is a union of (possibly uncountably many) contours of $\func$.
In other words, $\Xman = p^{-1}(p(\Xman))$.

For instance, a compact $1$-dimensional submanifold $\Xman$ of $\Mman$ is $\func$-adapted whenever it is a union of finitely many regular contours.
Also, a compact $2$-dimensional submanifold $\Xman$ of $\Mman$ is $\func$-adapted iff $\partial\Xman$ is $\func$-adapted, i.e.\ every boundary component of $\Xman$ is a contour.
For example, if $\func\in\FSpR{\Mman}$ and $a<b$ are two regular values of $\func$, then $\func^{-1}\bigl([a;b]\bigr)$ is $\func$-adapted.

Let $\Xman\subset\Mman$ be a connected $\func$-adapted submanifold.
Then by an \term{$\func$-regular neighborhood} of $\Xman$ we will mean an arbitrary connected $\func$-adapted subsurface $\regU{\Xman}$ such that
$\regU{\Xman}$ is a neighborhood of $\Xman$ and $\regU{\Xman}\setminus\Xman$ contains no critical points of $\func$.

More generally, suppose $\Xman=\mathop{\cup}\limits_{i=1}^{k}\Xman_i$ is a disjoint union of connected $\func$-adapted submanifolds $\Xman_i$.
For every $i=1,\ldots,k$ choose any $\func$-regular neighborhood $\regU{\Xman_i}$ of $\Xman_i$ such that $\regU{\Xman_i}\cap\regU{\Xman_j}=\varnothing$ for $i\not=j$.
Then their union $\regU{\Xman} = \mathop{\cup}\limits_{i=1}^{k}\regU{\Xman_i}$, will also be called an \term{$\func$-regular neighborhood} of $\Xman$.

Denote by $\FolStabilizer{\func}$ the subgroup of $\Stabilizer{\func}$ consisting of diffeomorphisms $\dif$ having the following properties:
\begin{enumerate}[leftmargin=*]
\item
$\dif$ leaves invariant every connected component of every level set of $\func$, i.e.\ it preserves the elements of  $\KRGraphf$;
\item
if $z$ is a \term{degenerate local extreme}, and thus $\dif(z)=z$, then the tangent map $T_{z}\dif:T_{z}\Mman\to T_{z}\Mman$ is the identity $\id_{T_{z}\Mman}$.
\end{enumerate}

Evidently, $\FolStabilizer{\func}$ is a normal subgroup of $\Stabilizer{\func}$.
Moreover, it follows from~\cite[Lemma~6.2]{Maksymenko:MFAT:2009}, describing local linear symmetries of homogeneous polynomials, that the quotient $\Stabilizer{\func}/\FolStabilizer{\func}$ is finite.
In fact, it can be regarded as a group of automorphisms of an ``enhanced'' Kronrod-Reeb graph of $\func$ induced by elements of $\Stabilizer{\func}$, see~\cite[Section~4]{Maksymenko:TA:2020}.
That graph is obtained from $\KRGraphf$ by adding a certain finite number edges to each vertex corresponding to a \term{degenerate local extreme} of $\func$, so that every $\dif\in\Stabilizer{\func}$ ``cyclically rotates'' those new edges.
In particular, if $\func$ is Morse, so it has no degenerated critical points, then that ``enhanced'' graph coincides with $\KRGraphf$, and in that case $\Stabilizer{\func}/\FolStabilizer{\func}$ is the group of automorphisms of $\KRGraphf$ induced by $\Stabilizer{\func}$.
However, we will not use that interpretation in the present paper.

For a closed subset $\Xman\subset\Mman$ denote
\begin{equation}\label{equ:Delta_G}
\begin{aligned}
    \FolStabilizer{\func,\Xman}       &:= \FolStabilizer{\func}\cap\Diff(\Mman,\Xman), &
    \GrpKR{\func,\Xman}               &:= \Stabilizer{\func,\Xman}/\FolStabilizer{\func,\Xman}, \\
    \FolStabilizerIsotId{\func,\Xman} &:= \FolStabilizer{\func}\cap\DiffId(\Mman,\Xman), &
    \GrpKRIsotId{\func,\Xman}         &:= \StabilizerIsotId{\func,\Xman}/\FolStabilizerIsotId{\func,\Xman}.
\end{aligned}
\end{equation}
Then we have the following short exact sequences
\begin{align}
    \mathbf{b}(\func,\Xman) &: \FolStabilizer{\func,\Xman}
    \monoArrow
    \Stabilizer{\func,\Xman}
    \epiArrow
    \GrpKR{\func,\Xman}, \notag
    \\
    \label{equ:Bieberbach_seq}
    \mathbf{b}'(\func,\Xman) &: \FolStabilizerIsotId{\func,\Xman}
    \monoArrow
    \StabilizerIsotId{\func,\Xman}
    \epiArrow
    \GrpKRIsotId{\func,\Xman}.
\end{align}
The second sequence~\eqref{equ:Bieberbach_seq} will play the most important role, and we will call it the \term{Bieberbach (short exact) sequence} of $(\func,\Xman)$, see~\cite{Maksymenko:TA:2020}.
For instance, \cite[Theorem~5.4]{Maksymenko:TA:2020}, in the statement~\ref{enum:th:prelim_statements:Sprf_product} of Theorem~\ref{th:prelim_statements} the inclusions~\eqref{equ:stab_inclusions} yield in fact isomorphisms of the sequences:
\begin{equation}\label{equ:iso_bbb}
    \mathbf{b}'(\func,\Yman)          \ \cong \
    \mathbf{b}'(\func,\Yman\cup\Xman) \ \cong \
    \prod\limits_{i=1}^{p}\mathbf{b}'(\func|_{\Lman_{i}},\partial\Lman_{i}).
\end{equation}
Our results will also be given in terms of such sequences, see proofs of Lemmas~\ref{lm:all_disks_T1}-\ref{lm:there_are_disks_T1_and_T2}.

Finally, denote by $\DiffNbh(\Mman,\Xman)$ the group of diffeomorphisms of $\Mman$ fixed on some neighborhood of $\Xman$ (so each $\dif\in\DiffNbh(\Mman,\Xman)$ is fixed on some neighborhood $U_h$ of $X$ which depends on $h$), and put
\begin{align*}
    &\FolStabilizerNbh{\func,\Xman} :=\FolStabilizer{\func} \cap\DiffNbh(\Mman,\Xman), &
    &\StabilizerNbh{\func,\Xman} :=\Stabilizer{\func} \cap\DiffNbh(\Mman,\Xman).
\end{align*}
\begin{lemma}[{\rm\cite[Corollary~7.2]{Maksymenko:TA:2020}}]\label{lm:Snb_S}
Let $\Xman$ be an $\func$-adapted submanifold, and $\regU{\Xman}$ be an $\func$-adapted neighborhood of $\Xman$.
Then the following inclusions of pairs are homotopy equivalences:
\begin{align*}
    \bigl( \Stabilizer{\func,\regU{\Xman}}, \FolStabilizer{\func,\regU{\Xman}} \bigr)
    \subset
    \bigl( \StabilizerNbh{\func,\Xman},     \FolStabilizerNbh{\func,\Xman}  \bigr)
    \subset
    \bigl( \Stabilizer{\func,\Xman},        \FolStabilizer{\func,\Xman} \bigr).
\end{align*}
\end{lemma}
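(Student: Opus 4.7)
The plan is to prove each of the two inclusions is a homotopy equivalence of pairs by constructing an explicit strong deformation retraction that respects the foliation subgroup structure. The crucial geometric input is that $\regU{\Xman}\setminus\Xman$ contains no critical points of $\func$, so $\func$ restricts to a submersion there; in particular, on a collar of $\Xman$ inside $\regU{\Xman}$ there exists a smooth nowhere-vanishing vector field $\xi$ tangent to the level sets of $\func$. Fix an auxiliary $\func$-adapted neighborhood $W$ of $\Xman$ with $\overline{W}\subset\Int\regU{\Xman}$ and a smooth cutoff $\mu\colon \Mman\to[0,1]$ equal to $1$ on $\overline{W}$ and vanishing outside $\regU{\Xman}$.

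For the outer inclusion $(\StabilizerNbh{\func,\Xman},\FolStabilizerNbh{\func,\Xman})\subset (\Stabilizer{\func,\Xman},\FolStabilizer{\func,\Xman})$, let $h\in\Stabilizer{\func,\Xman}$. Because $h|_\Xman=\id_\Xman$, $\func$ is a submersion on $\regU{\Xman}\setminus\Xman$, and $h$ preserves the values of $\func$, continuity forces $h$ to preserve setwise each level-set component of $\func$ in a neighborhood of $\Xman$. One can then build an isotopy $h_s$ from $h_0=h$ to a diffeomorphism $h_1$ coinciding with $\id$ on $W$, by ``pushing $h$ back to the identity along $\xi$'' with strength controlled by $\mu$; the construction uses the flow of $\mu\cdot\xi$ and amounts to sliding each point only within its own contour of $\func$. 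Consequently the isotopy stays inside $\Stabilizer{\func,\Xman}$, and whenever $h\in\FolStabilizer{\func,\Xman}$ it stays inside $\FolStabilizer{\func,\Xman}$, yielding the required deformation retraction of pairs.

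For the inner inclusion $(\Stabilizer{\func,\regU{\Xman}},\FolStabilizer{\func,\regU{\Xman}})\subset (\StabilizerNbh{\func,\Xman},\FolStabilizerNbh{\func,\Xman})$ one enlarges the (varying) identity-neighborhood $V_h$ of each $h\in\StabilizerNbh{\func,\Xman}$ up to the fixed $\regU{\Xman}$. Near every $h_0$ the set of admissible $V$ is uniform on a Whitney $C^\infty$-open neighborhood of $h_0$, so one can locally construct an $\func$-preserving ambient isotopy $\{\psi_t\}$ of $\Mman$ supported in $\Int\regU{\Xman}$ and satisfying $\psi_0=\id$, $\psi_1(\regU{\Xman})\subset V$; the conjugation path $t\mapsto \psi_t^{-1}h\psi_t$ then deforms $h$ through $\StabilizerNbh{\func,\Xman}$ to an element of $\Stabilizer{\func,\regU{\Xman}}$. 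A partition-of-unity argument on the base $\StabilizerNbh{\func,\Xman}$ assembles these local deformations into a global continuous homotopy of pairs.

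The main obstacle is to guarantee that both retractions are simultaneously continuous in the Whitney $C^\infty$-topology \emph{and} preserve the foliation condition $\FolStabilizer{\func,\cdot}$. This is handled by always moving points tangentially along level sets of $\func$, which automatically keeps leaf-preserving diffeomorphisms leaf-preserving, and by exploiting the local trivial product structure of an $\func$-adapted collar over its (regular) image in $\func(\regU{\Xman})$, which provides the uniform smoothness in $h$ required for the interpolations and the partition-of-unity gluing.
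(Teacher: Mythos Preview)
The paper does not give its own proof but cites \cite[Corollary~7.2]{Maksymenko:TA:2020} and notes that the argument there (stated for orientable $\Mman$) applies verbatim because the only geometry used is that $\func$-regular neighborhoods of the regular contours in question are cylinders, hence orientable. So what you are being compared against is the construction in that reference.

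Your outer-inclusion argument is essentially the standard one. The inner-inclusion argument, however, has a genuine gap. The assertion that ``near every $h_0$ the set of admissible $V$ is uniform on a Whitney $C^\infty$-open neighborhood of $h_0$'' is false: being fixed on a prescribed neighborhood $V$ is a \emph{closed} condition in the $C^\infty$ topology. Concretely, in a cylinder $\Circle\times[-1,1]$ with $\func(x,t)=t$ and $\Xman=\Circle\times\{0\}$, the diffeomorphisms $h_n(x,t)=(x+\phi_n(t),t)$ with $\phi_n$ a $C^\infty$-small bump supported in $\bigl(\tfrac{1}{n+1},\tfrac{1}{n}\bigr)$ converge to $\id$ in $C^\infty$ yet share no common neighborhood of $\Xman$ on which they are all the identity. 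Thus no locally uniform choice of $\psi_t$ exists, and the partition-of-unity gluing on the infinite-dimensional base is unjustified (and in any case nonstandard: one cannot average diffeomorphisms).

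The fix --- and this is what the cited construction does --- is to write down one global formula. Enlarge $\regU{\Xman}$ to a slightly bigger $\func$-regular neighborhood $\regU{\Xman}'$ (possible since $\partial\regU{\Xman}$ consists of regular contours), parametrize each cylinder component of $\overline{\regU{\Xman}'\setminus\Xman}$ as $\Circle\times[0,1]$ with $\regU{\Xman}$ corresponding to $\Circle\times[0,a]$ for some $a<1$, and write $h(c,t)=(H(c,t),t)$ there. Choose a smooth family $\sigma_s\colon[0,1]\to[0,1]$ with $\sigma_0=\id$, $\sigma_s(0)=0$, $\sigma_s(t)=t$ near $t=1$, and $\sigma_1\equiv 0$ on $[0,a]$. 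Then $h_s(c,t)=(H(c,\sigma_s(t)),t)$ (extended by $h$ elsewhere) is a strong deformation retraction of $\Stabilizer{\func,\Xman}$ onto $\Stabilizer{\func,\regU{\Xman}}$ that is manifestly continuous in $h$ and visibly preserves both $\StabilizerNbh{\func,\Xman}$ and the foliation subgroups $\FolStabilizer{\func,\cdot}$. This single retraction handles both inclusions at once, with no need for conjugation or gluing.
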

\begin{proof}[Remarks to the proof]
Though this statement is established in~\cite[Corollary~7.2]{Maksymenko:TA:2020} for orientable surfaces, the proof actually uses the fact that $\func$-regular neighborhoods of regular contours of $\func$ (being either connected components of $\Xman$ or of $\partial\Xman$) are always cylinders, and therefore they are orientable, see~\cite[Corollary~6.3]{MaksymenkoKuznietsova:PIGC:2019}.
\end{proof}

We will also need the following two simple statements which we leave to the reader.
The first lemma is a straightforward consequence of definitions, while the second one easily follows from axiom~\ref{axiom:Bd}.
\begin{lemma}\label{lm:h_in_FST}
Let $\func\in\FSp{\Mman}$, $\dif\in\Stabilizer{\func}$, and $\{\YYi{i}\}_{i\in\Lambda}$ be a family of $\func$-adapted subsurfaces invariant under $\dif$ and such that $\Mman=\cup_{i\in\Lambda}\YYi{i}$ (these subsurfaces may intersect each other).
Then $\dif\in\FolStabilizer{\func}$ if and only if $\dif|_{\YYi{i}} \in\FolStabilizer{\func|_{\YYi{i}}}$ for each $i\in\Lambda$.
\end{lemma}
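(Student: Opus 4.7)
The proof is a direct unwinding of the definition of $\FolStabilizer{\cdot}$ from Section~\ref{sect:func_on_surf}. Recall that $\dif\in\FolStabilizer{\func}$ is the conjunction of two conditions: (i) $\dif$ preserves every contour of $\func$ setwise, and (ii) $T_z\dif=\id_{T_z\Mman}$ at every degenerate local extreme $z$ of $\func$. My plan is to transfer each of these two conditions separately between $\func$ and the restrictions $\func|_{\YYi{i}}$, using the $\func$-adapted hypothesis and the $\dif$-invariance of each $\YYi{i}$.

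The key preliminary observation is that, because each $\YYi{i}$ is $\func$-adapted, the contours of $\func|_{\YYi{i}}$ are exactly the contours of $\func$ that are contained in $\YYi{i}$. Indeed, a connected component of $\func^{-1}(c)\cap\YYi{i}$ is a union of contours of $\func$ by the $\func$-adapted property; since contours of $\func$ are pairwise disjoint connected subsets of $\func^{-1}(c)$, this component must coincide with a single contour of $\func$. Conversely, a contour of $\func$ that meets $\YYi{i}$ is entirely contained in $\YYi{i}$ and therefore is a contour of $\func|_{\YYi{i}}$. With this identification, condition (i) transfers immediately in both directions: forward, if $\dif$ preserves all contours of $\func$, then since $\dif(\YYi{i})=\YYi{i}$ the restriction $\dif|_{\YYi{i}}$ preserves every contour of $\func|_{\YYi{i}}$; backward, given an arbitrary contour $C$ of $\func$, pick $x\in C$ and choose $i$ with $x\in\YYi{i}$, so that $C\subset\YYi{i}$ is a contour of $\func|_{\YYi{i}}$ and thus $\dif(C)=\dif|_{\YYi{i}}(C)=C$.

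For condition (ii), let $z$ be a degenerate local extreme of $\func$; by axiom~\ref{axiom:Hm} its contour is the singleton $\{z\}$. Since each $\partial\YYi{j}$ is a $1$-manifold consisting of contours of $\func$, the singleton $\{z\}$ cannot sit inside any $\partial\YYi{j}$; hence every $\YYi{i}$ that contains $z$ must contain it in its topological interior. As $\{\YYi{i}\}$ covers $\Mman$, such an $i$ exists, and near $z$ the germ of $\func|_{\YYi{i}}$ coincides with that of $\func$, so $z$ is a degenerate local extreme of $\func|_{\YYi{i}}$ and the tangent maps of $\dif$ and $\dif|_{\YYi{i}}$ at $z$ agree; condition (ii) therefore transfers verbatim in both directions. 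The argument is essentially bookkeeping; the only delicate point is ensuring that degenerate local extremes do not appear spuriously on some $\partial\YYi{i}$, but this is handled by the dimension observation that an isolated point cannot lie in a $1$-manifold boundary.
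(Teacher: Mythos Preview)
Your proof is correct and is precisely the kind of routine verification the paper has in mind: the authors state that this lemma ``is a straightforward consequence of definitions'' and leave it to the reader, so there is no proof in the paper to compare against beyond that remark. Your handling of the two defining conditions of $\FolStabilizer{\cdot}$ is clean; the observation that degenerate local extremes cannot sit on $\partial\YYi{i}$ (since $\partial\YYi{i}$, being the boundary of a subsurface, is a $1$-manifold made up of regular contours) is exactly the small point one needs to make the tangent-map condition transfer.
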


\begin{lemma}\label{lm:h_Sidf__SidfX}
Let $\func\in\FSp{\Mman}$, $\Xman$ be any collection of boundary components of $\partial\Mman$, and $\regU{\Xman}$ be any $\func$-regular neighborhood of $\Xman$.
Let also $\dif\in\StabilizerId{\func}$ be a diffeomorphism fixed near $\Xman$.
Then $\dif$ is isotopic in $\StabilizerNbh{\func,\Xman}$ to a diffeomorphism supported in $\regU{\Xman}$.
\end{lemma}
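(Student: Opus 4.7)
The plan is to truncate an $f$-preserving isotopy from $\id$ to $h$ in a neighborhood of $\Xman$, exploiting the collar structure of $\regU{\Xman}$ provided by axiom~\ref{axiom:Bd}.

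By axiom~\ref{axiom:Bd}, the function $\func$ has no critical points on $\Xman$ and is constant on each of its components, so $\regU{\Xman}$ can be identified with a collar $\Xman \times [0, a)$ in which $\func$ depends only on the second coordinate. Every element of $\Stabilizer{\func}$ thus acts on this collar by $(x, s) \mapsto (\phi(x, s), s)$, preserving each contour $\Xman \times \{s\}$ setwise. The assumption that $h$ is fixed near $\Xman$ yields some $\epsilon \in (0, a)$ with $h \equiv \id$ on $\Xman \times [0, \epsilon)$, and the assumption $\dif \in \StabilizerId{\func}$ furnishes an isotopy $\{H_t\}_{t \in [0, 1]} \subset \Stabilizer{\func}$ with $H_0 = \id$ and $H_1 = \dif$; writing $H_t(x, s) = (\phi_{t, s}(x), s)$ on the collar, one has $\phi_{0, s} \equiv \id_\Xman$ for every $s$, and $\phi_{1, s} \equiv \id_\Xman$ for $s \in [0, \epsilon)$.

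Fix $0 < \epsilon_1 < \epsilon_2 < \epsilon$ and construct a smooth family of diffeomorphisms $\Psi_{t, s} \colon \Xman \to \Xman$, parameterised by $(t, s) \in [0, 1] \times [\epsilon_1, \epsilon_2]$, satisfying $\Psi_{t, \epsilon_1} = \id_\Xman$, $\Psi_{t, s} = \phi_{t, s}$ for $s$ near $\epsilon_2$ (ensuring smooth matching), and $\Psi_{1, s} = \id_\Xman$ throughout. Define the isotopy $\{\tilde H_t\}$ to equal $H_t$ outside $\Xman \times [0, \epsilon_2)$, to equal the identity on $\Xman \times [0, \epsilon_1]$, and to equal $(x, s) \mapsto (\Psi_{t, s}(x), s)$ in the interpolation annulus $\Xman \times [\epsilon_1, \epsilon_2]$. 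Each $\tilde H_t$ preserves $\func$ (by its level-wise action in the collar and by coincidence with $H_t$ outside) and is the identity on a neighborhood of $\Xman$, so $\{\tilde H_t\}$ lies in $\StabilizerNbh{\func, \Xman}$; furthermore $\tilde H_1 = \dif$ (since $\phi_{1, s} = \Psi_{1, s} = \id_\Xman$ on the interpolation annulus), while $\tilde H_0 = \id$ outside $\Xman \times [\epsilon_1, \epsilon_2] \subset \regU{\Xman}$, so $\tilde H_0$ is supported in $\regU{\Xman}$, as required.

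The principal technical obstacle is the construction of $\Psi_{t, s}$ with the prescribed boundary behaviour on three sides of the rectangle $[0, 1] \times [\epsilon_1, \epsilon_2]$, leaving the fourth side $\{0\} \times [\epsilon_1, \epsilon_2]$ free. This amounts to extending a map from a connected subset of the boundary into the identity component of $\Diff(\Xman)$; since each component of $\Xman$ is a circle, this identity component is a finite product of copies of $\SO(2)$, and the extension exists after choosing the free boundary value $\Psi_{0, s}$ to represent the class in $\pi_1$ of this Lie group that cancels the obstruction coming from the loop $t \mapsto \phi_{t, \epsilon_2}$, after which a straight-line homotopy in the Lie algebra cut off by a bump function in $s$ produces the required family.
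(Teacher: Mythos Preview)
The paper does not give a proof of this lemma, remarking only that it ``easily follows from axiom~\ref{axiom:Bd}''. Your argument carries out exactly what the paper intends: exploit the collar structure provided by axiom~\ref{axiom:Bd} to truncate a chosen $\func$-preserving isotopy $\{H_t\}$ near $\Xman$, producing an isotopy in $\StabilizerNbh{\func,\Xman}$ ending at $\dif$ and starting at a diffeomorphism supported in $\regU{\Xman}$.

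Two minor remarks. First, your assertion that \emph{every} element of $\Stabilizer{\func}$ preserves each slice $\Xman\times\{s\}$ is too strong as stated: a general $\func$-preserving diffeomorphism may permute contours within a level set. What you actually use, and what is true, is that each $H_t$ preserves these slices; this follows because $H_0=\id_{\Mman}$ and the induced permutation of contours in each level set is locally constant in $t$. Second, the $\pi_1$-obstruction analysis in your last paragraph is more than is needed. The prescribed data for $\Psi$ lives on three sides of the rectangle $[0,1]\times[\epsilon_1,\epsilon_2]$, which together form a contractible arc onto which the rectangle deformation retracts; hence any map from that arc into $\DiffId(\Xman)$ extends to the rectangle by composing with the retraction, with $\Psi_{0,s}$ simply inherited. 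No obstruction needs to be cancelled. Your argument is correct, just slightly over-engineered at this final step.
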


\section{Functions on M\"obius band}\label{sect:decomp_B_in_Yi}
\subsection{Reduction to the case $\Pman=\bR$}
Let $\MBand$ be a M\"obius band and $\func\in\FSpS{\MBand}$ be a map into $\Circle$.
Since $\func$ takes constant value on $\partial\MBand$, it follows that $\func$ is null-homotopic.
Let $p:\bR\to\Circle$, $p(t) = e^{2\pi i t}$, be the universal covering map and $\sg{\func}:\MBand\to\bR$ be any  lifting of $\func$, so $\func = \sg{\func}\circ p$.
Then, see~\cite[Lemma~5.3(3)]{Maksymenko:TA:2020}, $\sg{\func}\in\FSpR{\MBand}$, $\func$ and $\sg{\func}$ have the same critical points and the same partitions into level sets, which also implies that $\Stabilizer{\func}=\Stabilizer{\sg{\func}}$.
Hence, for studying the stabilizer of $\func\in\FSpS{\MBand}$ we can replace $\func$ with $\sg{\func}$.

Therefore in what follows we will assume that $\func\in\FSpR{\MBand}$.

\subsection{Special $\func$-decomposition of $\MBand$}
The following lemma provides an additional (to statement~\ref{enum:th:prelim_statements:Sprf_product} of Theorem~\ref{th:prelim_statements}) decomposition of a M\"obius band $\MBand$ with respect to $\func$.

\begin{lemma}[{\rm\cite[Theorem~1.4]{MaksymenkoKuznietsova:PIGC:2019}}]
\label{lm:unique_cr_level}
Every $\func\in\FSpR{\MBand}$ has a unique critical contour $\CrComp$ with the following property:
there exists an $\func$-regular neighborhood $\regU{\CrComp} \subset \Int{\MBand}$ of $\CrComp$ invariant under $\STB{\MBand}$ and such that
\begin{enumerate}[label={\rm(\alph*)}, itemsep=0.5ex, topsep=0.5ex, leftmargin=*]
\item\label{enum:lm:unique_cr_level:y0-cyl}
the unique connected component $\YYi{0}$ of $\overline{\MBand\setminus\regU{\CrComp}}$ containing $\partial\MBand$ is a cylinder;
\item\label{enum:lm:unique_cr_level:yi-disks}
all other components $\YYi{1}, \ldots, \YYi{n}$ of $\overline{\MBand\setminus\regU{\CrComp}}$ are $2$-disks, see Figure~\ref{fig:cr_level_decomp}.
\end{enumerate}
Properties~\ref{enum:lm:unique_cr_level:y0-cyl} and~\ref{enum:lm:unique_cr_level:yi-disks} then hold for arbitrary $\STB{\MBand}$-invariant $\func$-regular neighborhood of $\CrComp$.
\end{lemma}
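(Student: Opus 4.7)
The plan is to exploit the orientation double cover of $\MBand$, where the topology of contours and the Kronrod-Reeb graph become much simpler to analyze, and then descend the results back to $\MBand$. Let $\pi\colon\widetilde{\MBand}\to\MBand$ be the orientation double cover, so $\widetilde{\MBand}\cong S^{1}\times[0,1]$ is a cylinder with free orientation-reversing deck involution $\sigma$ interchanging the two boundary circles. The lift $\widetilde{\func}:=\func\circ\pi$ lies in $\FSpR{\widetilde{\MBand}}$, is $\sigma$-invariant, and $\pi$ induces a bijection between contours of $\func$ in $\MBand$ and $\sigma$-orbits of contours of $\widetilde{\func}$ in $\widetilde{\MBand}$. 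By standard results on functions on the cylinder, the Kronrod-Reeb graph $\widetilde{\Gamma}:=\Gamma_{\widetilde{\func}}$ is a finite tree with precisely two leaves $v_{1},v_{2}$ corresponding to the boundary circles, and the induced action of $\sigma$ on $\widetilde{\Gamma}$ swaps $v_{1}$ and $v_{2}$.

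Next, I would identify $\CrComp$ with the image under $\pi$ of the unique $\sigma$-fixed vertex of $\widetilde{\Gamma}$. The unique path $\gamma$ in $\widetilde{\Gamma}$ from $v_{1}$ to $v_{2}$ is $\sigma$-invariant, so $\sigma|_{\gamma}$ is an involution of an interval with swapped endpoints, and therefore has a unique interior fixed point $c$. The key observation is that $c$ must be a vertex of $\widetilde{\Gamma}$: were $c$ interior to some edge, the preimage $p^{-1}(c)\subset\widetilde{\MBand}$ would be a single $\sigma$-invariant regular contour of $\widetilde{\func}$, and $\pi(p^{-1}(c))$ would then be a regular contour of $\func$ that is one-sided in $\MBand$, contradicting the fact that regular level sets of smooth functions are always two-sided. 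The same two-sidedness principle rules out a $\sigma$-invariant edge with both endpoints $\sigma$-fixed. Combined with the standard fact that the fixed set of an involution on a tree is a connected subtree, this forces $\mathrm{Fix}(\sigma|_{\widetilde{\Gamma}})=\{c\}$. Setting $\widetilde{K}:=p^{-1}(c)$ and $K:=\pi(\widetilde{K})$ gives a critical contour of $\func$ in $\MBand$.

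Third, I would construct $\regU{K}$, verify properties \ref{enum:lm:unique_cr_level:y0-cyl}--\ref{enum:lm:unique_cr_level:yi-disks}, and settle uniqueness and $\STB{\MBand}$-invariance. Pick a small $\sigma$-invariant $\widetilde{\func}$-regular neighborhood $\widetilde{U}$ of $\widetilde{K}$ and set $\regU{K}:=\pi(\widetilde{U})$. The components of $\widetilde{\Gamma}\setminus\{c\}$ include two subtrees $\widetilde{\Gamma}_{v_{1}},\widetilde{\Gamma}_{v_{2}}$ containing the leaves and swapped by $\sigma$; all further components must pair up under $\sigma$, for a $\sigma$-invariant component not containing $c$ would force a second $\sigma$-fixed vertex by a tree-fixed-point argument. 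Each $p^{-1}(\widetilde{\Gamma}_{v_{i}})$ is a compact connected planar subsurface of the cylinder $\widetilde{\MBand}$ with two boundary circles (one on $\partial\widetilde{\MBand}$, one meeting $\widetilde{U}$), hence a cylinder; together they descend under $\pi$ to a single cylinder $\YYi{0}\subset\MBand$ containing $\partial\MBand$. Each other $\sigma$-pair of subtrees lifts to two disjoint compact connected subsurfaces of $\widetilde{\MBand}$ each bounded by a single circle lying in the interior of the cylinder, hence each is a disk by planarity, and the pair descends under $\pi$ to a single disk $\YYi{j}\subset\MBand$. Uniqueness of $\CrComp$ follows from the non-orientable genus constraint: any second critical contour $K'$ with the stated property would have a non-orientable $\regU{K'}$ disjoint from the non-orientable $\regU{K}$, giving two disjoint M\"obius bands inside $\MBand$ and contradicting the fact that $\MBand$ has non-orientable genus $1$. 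Finally, uniqueness forces $h(K)=K$ for every $h\in\STB{\MBand}$, and a $\STB{\MBand}$-invariant regular neighborhood is obtained by intersecting the $h$-translates of $\regU{K}$ over a system of coset representatives of the finite quotient $\STB{\MBand}/\FolStabilizer{\func,\partial\MBand}$.

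The main obstacle I expect is the topological identification of the complementary pieces $\YYi{j}$: one must carefully match the edges of $\widetilde{\Gamma}$ incident to $c$ with the boundary circles of $\widetilde{U}$, confirm that each complementary component has a single interior boundary circle, and invoke the planarity of subsurfaces of a cylinder to conclude that they are disks or cylinders; additionally, one must check that the $\sigma$-invariant choice of $\widetilde{U}$ descends to an $\STB{\MBand}$-invariant $\regU{K}$ compatibly with the action of $\STB{\MBand}$ on the Kronrod-Reeb graph.
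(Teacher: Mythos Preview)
The present paper does not prove this lemma; it is quoted from \cite[Theorem~1.4]{MaksymenkoKuznietsova:PIGC:2019} and only the statement is recalled, so there is no in-paper argument to compare your proposal against.

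Your double-cover approach is sound and yields a correct proof. One inaccuracy: $\widetilde{\Gamma}$ need not have ``precisely two leaves'', since interior local extrema of $\widetilde{\func}$ produce further leaves. Fortunately your argument only uses that $\widetilde{\Gamma}$ is a tree (true because every simple closed curve in a cylinder separates) and that exactly two of its leaves come from $\partial\widetilde{\MBand}$, both of which hold; the identification of the complementary pieces then goes through as you wrote, since each component of $\widetilde{\Gamma}\setminus\{c\}$ meets $c$ along a single edge. For uniqueness, the two-disjoint-M\"obius-bands argument needs the extra observation that properties~\ref{enum:lm:unique_cr_level:y0-cyl}--\ref{enum:lm:unique_cr_level:yi-disks} persist under shrinking the $\func$-regular neighborhood (shrinking only appends product collars to the $Y_j$); hence if $K'\ne K$ then $K'$ lies in some $Y_i$, and one may take $\regU{K'}\subset Y_i$, which is orientable, contradicting the non-orientability of $\regU{K'}$ that you correctly deduce from~\ref{enum:lm:unique_cr_level:y0-cyl}--\ref{enum:lm:unique_cr_level:yi-disks}. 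For the $\STB{\MBand}$-invariant neighborhood, your coset-averaging works because every element of $\FolStabilizer{\func,\partial\MBand}$ preserves each contour and hence every $\func$-adapted set, so the intersection is effectively over the finite group $\GSTB{\MBand}$; it is worth noting that a finite intersection of $\func$-regular neighborhoods of $K$ is again one, since each is determined by how far one extends along the product collars about $K$.
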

\begin{figure}[htbp]
\includegraphics[height=2.5cm]{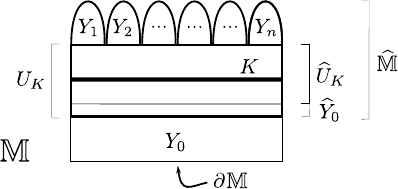}
\caption{Schematic decomposition of a M\"obius band associated with $\func\in\FSpR{\MBand}$}
\label{fig:cr_level_decomp}
\end{figure}

We will call $\CrComp$ a \term{special} contour of $\func$, and $\xi(\func):=\{\regU{\CrComp}, \YYi{0},\YYi{1},\ldots,\YYi{n}\}$ a \term{\spd{\func}} of $\MBand$ (associated with the $\STB{\MBand}$-invariant $\func$-regular neighborhood $\regU{\CrComp}$ of $\CrComp$).

For $i=0,1\ldots,n$, let $\CCi{i}$ be the connected component of $\MBand\setminus\CrComp$ containing $\YYi{i}$, and $\QQi{i}:= \overline{\CCi{i}}\setminus\fSing$.
One easily checks the following statements.
\begin{enumerate}[label={\rm(\roman*)}, leftmargin=*]
\item\label{enum:Qi_prop:Q0}
$\QQi{0}$ is diffeomorphic to $(\Circle\times[0;1]) \setminus (\Fman\times1)$, where $\Fman$ is a finite set.
So it is a closed cylinder out of finitely many points removed from one of its boundary components.
Another boundary component corresponds thus to $\partial\MBand$, see Figure~\ref{fig:cyl_cover}.

\item\label{enum:Qi_prop:Qi}
On the other hand, each $\QQi{i}$, $i=1,\ldots,n$, is a closed $2$-disk out of finitely many points removed from its boundary.

\item\label{enum:Qi_prop:bdedges}
Moreover, for each $i=0,\ldots,n$, the intersection $\QQi{i}\cap\CrComp$ is a finite collection of arcs (edges of $\CrComp$) which also constitute a cycle.
We will call them \term{boundary edges of $\QQi{i}$}.

\item\label{enum:Qi_prop:2comps}
Each edge of $\CrComp$ belongs to exactly two distinct components $\QQi{i}$ and $\QQi{j}$ one of which is ``upper'' and another one ``lower'' in the sense that the images $\func(\CCi{i})$ and $\func(\CCi{j})$ are contained in distinct components of $\bR\setminus\func(\CrComp)$.
\end{enumerate}

\subsection{Quasi-cones}\label{sect:quasi-cones}
Let $C\Circle = \bigl(\Circle\times[0;1]\bigr)/\bigl(\Circle\times0\bigr)$ be a cone over a circle, i.e.\ just a $2$-disk.
The equivalence class $[\Circle\times0]$ is called the \term{vertex} of the cone $C\Circle$.
Let also $F_1\sqcup\cdots\sqcup F_k\subset \Circle\times1$ be a finite collection of mutually disjoint finite subsets.
Shrink each $F_i$ into a single point.
Then the corresponding quotient space $\Lman:=C\Circle / (F_1\sqcup\cdots\sqcup F_k)$ will be called a \term{quasi-cone} over $\Circle$.
Let $p:C\Circle \to \Lman$ be the respective quotient map.
Then $p(\Circle\times1)$ will be called the \term{base} of the quasi-cone $\Lman$ and denoted by $b\Lman$.

One easily checks that if $\dif:\Lman_1\to\Lman_2$ is a homeomorphism between quasi-cones over $\Circle$, then it lifts to a unique  homeomorphism $\hat{\dif}:C\Circle\to C\Circle$ such that $\dif\circ p_1 = p_2\circ \hat{\dif}$, where $p_i:C\Circle\to\Lman_i$, $i=1,2$, is the corresponding quotient map.
Moreover, as $\hat{\dif}(\Circle\times 1) = \Circle\times 1$, we have a homeomorphism $\hat{\dif}_1:\Circle\to\Circle$ such that $\hat{\dif}(\px,1) = (\hat{\dif}_1(\px),1)$ for all $\px\in\Circle$.
Hence, one can also define another homeomorphism $\hat{\dif}':C\Circle \to C\Circle$, $\hat{\dif}'(\px,t) = (\hat{\dif}_1(\px),t)$, called a \term{cone over $\hat{\dif}_1$}.
It coincides with $\dif$ on the bases and sends the vertex of $\Lman_1$ to the vertex of $\Lman_2$.
Moreover, $\hat{\dif}'$ induces a homeomorphism $\dif':\Lman_1\to\Lman_2$ which coincides with $\dif$ on the base $b\Lman_1$, and is evidently determined by $\dif:b\Lman_1\to b\Lman_2$.
We will call $\dif'$ the ``\term{cone change}'' of $\dif$.

\subsection{CW-decompositions of $S^2$ and $\PrjPlane$ associated with $\func$}\label{sect:CW_decomp_RP2_of_f}
Let $\hMBand = \MBand/\partial\MBand$ be a surface obtained by shrinking the boundary $\partial\MBand$ into one point which we will denote by $\bdpt$.
Then $\hMBand$ is a projective plane, and we get a CW-partition $\PrjCWPart$ of $\hMBand$ whose
\begin{itemize}
\item $0$-cells are critical points of $\func$ belonging to $\CrComp$;
\item $1$-cells are connected components of $\CrComp\setminus\fSing$;
\item $2$-cells are connected components $\CCi{0}/\partial\MBand, \,\CCi{1},\,\ldots,\,\CCi{n}$ of $\hMBand\setminus\CrComp$.
\end{itemize}
Thus, we can regard $\hMBand$ as a one point compactification of $\Int{\MBand}$, and so that $\hMBand = \Int{\MBand} \sqcup \{\bdpt\}$.
Denote by $c_i$, $i=0,1,2$, the total number of $i$-cells of $\PrjCWPart$.
In particular, $c_2=n+1$.

Moreover, let $p:S^2\to\hMBand$ be the universal cover and $\xi:S^2\to S^2$ be the reversing orientation involution without fixed points generating the group $\bZ_2$ of covering transformations.
Then $S^2$ has a CW-structure $\SphCWPart$ whose $i$-cells are connected components of the inverses of $i$-cells of $\PrjCWPart$.
In fact, for each $i$-cell $\cell\in\PrjCWPart$ its inverse image $p^{-1}(e)$ is a disjoint union of two $i$-cells $\tcell_{0}$ and $\tcell_{1}$ such that $\xi$ exchanges them.
We will call them \term{liftings} of $\cell$.

Notice that it follows from~\ref{enum:Qi_prop:Q0} and~\ref{enum:Qi_prop:Qi} above that the closure of each $2$-cell of $\PrjCWPart$ is a quasi-cone over $\Circle$.
We can also assume that $x^{*}$ is the vertex of $\CCi{0}/\partial\MBand$.

\subsection{Lefschetz number}
Due to Lemma~\ref{lm:unique_cr_level} each $\dif\in\STB{\MBand}$ yields a cellular (i.e.\ sending cells to cells) homeomorphism $\pdif:\hMBand\to\hMBand$ which fixes $\bdpt$ and coincides with $\dif$ on $\Int{\MBand}$.
Thus, if $\cell$ is an $i$-cell of $\PrjCWPart$, then $\pdif(\cell)$ is also an $i$-cell of $\PrjCWPart$.
Then for $i=1,2$ one can take to account whether the restriction map $\pdif|_{\cell}: \cell \to \pdif(\cell)$ preserves or reverses chosen orientations.
For the case $i=0$ we will always assume that $\pdif|_{\cell}: \cell \to \pdif(\cell)$ preserves the orientation.

We will say that $\cell$ is $\pdif^{+}$-invariant (resp. $\pdif^{-}$-invariant) if $\pdif(\cell)=\cell$ and the restriction map $\pdif:\cell\to \cell$ preserves (resp. reverses) orientation.
Denote by $\cplus{i}(\pdif)$ and $\cmin{i}(\pdif)$ the number of $\pdif^{+}$-invariant (resp. $\pdif^{-}$-invariant) cells of $\PrjCWPart$.
Then $\cmin{0}(\pdif)=0$.

Now fix some orientation of each cell.
This allows to define the group $C_i(\hMBand,\bZ)$, $i=0,1,2$, of integral chains of the CW-partition $\PrjCWPart$, being just a free abelian group whose basis consists of $i$-cells.
Let also $\pdif_i:C_i(\hMBand,\bZ)\to C_i(\hMBand,\bZ)$ be the induced automorphism of $C_i(\hMBand,\bZ)$.
Since $\pdif$ is cellular, $\pdif_i$ is given by some non-degenerate $(c_i\times c_i)$-matrix $\Amatri{i}$ and each column and each row of $\Amatri{i}$ contains only one non-zero being either $+1$ or $-1$.
Notice also that non-zero diagonal elements of $\Amatri{i}$ correspond to invariant $i$-cells under $\pdif$.
Moreover, such an element equals $+1$ (resp.\ $-1$) is $\pdif$ preserves (resp. reverses) orientation of the invariant cell.
In particular, due to the above convention, $\Amatri{0}$ may consist of $0$ and $1$.
This implies that $\trace(\Amatri{i}) = \cplus{i}(\pdif) - \cmin{i}(\pdif)$, $i=0,1,2$.

It is also well known that the following number $L(\pdif) = \trace(\Amatri{0}) - \trace(\Amatri{1}) + \trace(\Amatri{2})$, called the \term{Lefschetz number} of $\pdif$, depends in fact only on the homotopy class of $\pdif$.
In particular, since $\dif$ is isotopic to $\id_{\MBand}$ relatively $\partial\MBand$, it follows that $\pdif$ is isotopic to $\id_{\hMBand}$, whence
\[
    L(\pdif) = L(\id_{\hMBand}) = c_0 - c_1 + c_2 = \chi(\hMBand) = 1.
\]
Thus, we finally get the following identity:
\begin{equation}\label{equ:Lefschetz_number_of_h}
    1  =  c_0 - c_1 + c_2
       = \cplus{0}(\pdif) -
         \bigl( \cplus{1}(\pdif) - \cmin{1}(\pdif) \bigr) +
         \bigl( \cplus{2}(\pdif) - \cmin{2}(\pdif) \bigr).
\end{equation}
Also note that since $\pdif$ is homotopic to $\id_{\hMBand}$ there exists a unique lifting $\spdif:S^2\to S^2$ such that
\begin{itemize}
\item $\spdif$ is isotopic to $\id_{S^2}$ and thus preserves orientation of $S^2$;
\item $\spdif$ is also $\SphCWPart$-cellular.
\end{itemize}

\begin{lemma}\label{lm:inv_cells__pdif__spdif}
Let $\cell\in\PrjCWPart$ be an $i$-cell with $i=1,2$, and $\tcell_0,\tcell_1\in\SphCWPart$ be its liftings.
If $\cell$ is $\pdif^{+}$-invariant, then both $\tcell_{0}$ and $\tcell_{1}$ are $\spdif^{+}$-invaraint.
If $\cell$ is $\pdif^{-}$-invariant, then $\spdif$ exchanges $\tcell_{0}$ and $\tcell_{1}$, so they are not invariant under $\spdif$.
In particular,
\begin{align*}
\cplus{i}(\spdif) &= 2 \cplus{i}(\pdif), &
\cmin{i}(\spdif)  &= 0
\end{align*}
and therefore we have the following identity:
\begin{equation}\label{equ:Lefschetz_number_of_spdif}
    L(\spdif)
        = \chi(S^2)
        = 2
        = \cplus{0}(\spdif) - 2\cplus{1}(\pdif) + 2\cplus{2}(\pdif).
\end{equation}
\end{lemma}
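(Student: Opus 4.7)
The plan is to determine, for each $\pdif$-invariant cell $\cell$, whether the isotopically trivial lift $\spdif$ fixes both preimages or swaps them, by exploiting that $\spdif$ is orientation-preserving on $S^2$ while the covering involution $\xi$ is orientation-reversing. I would handle 2-cells directly via the global orientation of $S^2$, and then reduce the 1-cell case to the 2-cell case using adjacent faces in the $\func$-adapted partition.

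First I would verify that $\spdif$ commutes with $\xi$: since $p\circ(\spdif\circ\xi) = \pdif\circ p$, the composition $\spdif\circ\xi$ is one of the two lifts of $\pdif$, and it cannot equal $\spdif$ because $\xi$ has no fixed points, so $\spdif\circ\xi = \xi\circ\spdif$. Consequently, if $\pdif(\cell)=\cell$, the pair $\{\tcell_0,\tcell_1\} = \{\tcell_0,\xi(\tcell_0)\}$ is $\spdif$-stable, and we are in one of two cases: $\spdif$ fixes both lifts setwise (Case~1), or it swaps them (Case~2).

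For a 2-cell $\cell$, fix a global orientation on $S^2$, inducing orientations on all 2-cells of $\SphCWPart$. From $p=p\circ\xi$ and the orientation-reversing nature of $\xi$, the two restrictions $p|_{\tcell_0}$ and $p|_{\tcell_1}$ have opposite orientation signs as diffeomorphisms onto $\cell$. Because $\spdif$ preserves the global orientation of $S^2$, it preserves orientation on any 2-cell it fixes. Tracing signs in the identity $p\circ\spdif = \pdif\circ p$ restricted to $\tcell_0$ then shows that Case~1 is equivalent to $\cell$ being $\pdif^+$-invariant (so both $\tcell_j$ are automatically $\spdif^+$-invariant), while Case~2 is equivalent to $\cell$ being $\pdif^-$-invariant.

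For 1-cells I would reduce to the 2-cell case via adjacent faces. By property~\ref{enum:Qi_prop:2comps}, each 1-cell $\cell\subset\CrComp$ lies on the boundary of exactly two distinct 2-cells $f_+,f_-$ of $\PrjCWPart$, one upper and one lower relative to $\func$. Since $\pdif$ is induced by a diffeomorphism preserving $\func$, it preserves this upper/lower classification, so $\pdif(\cell)=\cell$ forces $\pdif(f_\pm)=f_\pm$; moreover $\pdif|_\cell$ preserves orientation if and only if $\pdif|_{f_+}$ does, via the boundary-induced orientation on $\cell$. Each lift $\tcell_j$ sits on the boundary of a unique lift of $f_+$, so applying the 2-cell case to $f_+$ gives: if $\cell$ is $\pdif^+$-invariant then the lifts of $f_+$ are $\spdif^+$-invariant and consequently so are the lifts $\tcell_j$; if $\cell$ is $\pdif^-$-invariant then $\spdif$ swaps the lifts of $f_+$ and hence the lifts of $\cell$. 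Summing cellwise yields $\cplus{i}(\spdif) = 2\cplus{i}(\pdif)$ and $\cmin{i}(\spdif) = 0$, and identity~\eqref{equ:Lefschetz_number_of_spdif} then follows from $L(\spdif) = \chi(S^2) = 2$ expressed via cellular traces. The main obstacle is precisely the 1-cell analysis: a 1-cell carries no intrinsic orientation inherited from $S^2$, so a direct global-orientation argument fails, and the reduction through adjacent faces crucially uses the geometric feature that every edge of $\CrComp$ has distinct upper and lower adjacent 2-cells.
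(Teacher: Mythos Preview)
Your argument is correct and essentially matches the paper's approach: the paper's proof is a one-line sketch invoking that $\xi$ swaps the two lifts while reversing orientation, and explicitly leaves the details to the reader. Your reduction of the 1-cell case to the adjacent 2-cells via property~\ref{enum:Qi_prop:2comps} is exactly the kind of detail needed to make that sketch rigorous, since a 1-cell carries no orientation inherited from the global orientation of $S^2$.
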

\begin{proof}
The statements about $\spdif^{\pm}$-invariantness follow from the fact that $\xi$ exchanges $\tcell_{0}$ and $\tcell_{1}$, and $\xi:\tcell_{k} \to \tcell_{1-k}$, $k=0,1$, reverses orientation.
The last formula is a variant of the Lefschetz formula~\eqref{equ:Lefschetz_number_of_h} for $\spdif$.
We leave the details for the reader.
\end{proof}

\subsection{Epimorphism $\eta\colon\PSTB{\MBand}\to\bZ$}
Let $\tQman_{0} = (\bR\times[0;1]) \setminus (\bZ\times1)$, and $J_k = (k;k+1)\times 1$ for $k\in\bZ$.
Then we have the universal covering map $p:\tQman_{0}\to\QQi{0}$, and hence for each $\dif\in\STB{\MBand}$ there exists a unique lifting $\pdif:\tQman_{0}\to\tQman_{0}$ of the restriction $\dif|_{\QQi{0}}$ fixed on $\bR\times0$.
Evidently, $\pdif$ should shift the intervals $J_k$, so there exists unique integer $\sg{\eta}(\dif)\in\bZ$ such that $\pdif(J_k)=J_{k+\sg{\eta}(\dif)}$ for all $k\in\bZ$.
One easily checks that the correspondence $\dif\mapsto\sg{\eta}(\dif)$ is a well-defined homomorphism $\sg{\eta}:\STB{\MBand} \to \bZ$.

\begin{figure}[htbp!]
\includegraphics[width=0.8\textwidth]{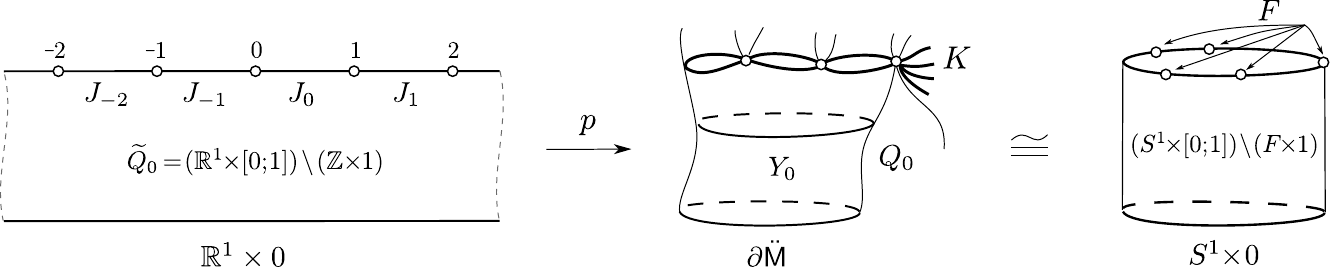}
\caption{}\label{fig:cyl_cover}
\end{figure}

Denote also by $c$ the total number of intervals in $\QQi{0}\cap\CrComp$ which is the same as the number of points in $\Fman$.
Moreover, let $\tau\in\STB{\MBand}$ be a Dehn twist along $\partial\MBand$ supported in the cylinder $\YYi{0} \subset \Cman_0$.
Thus, by definition, $\tau$ is a ``one rotation along $\partial\MBand$'', which means that $\sg{\eta}(\tau) = \pm c$.
Replacing $\tau$ with $\tau^{-1}$ we can assume that $\sg{\eta}(\tau) = c$.
In particular, $\sg{\eta}$ is a non-trivial homomorphism, and thus $\sg{\eta}(\STB{\MBand}) = a\bZ$ for some $a\geq1$.

Hence, we get an epimorphism
\[
    \eta:\STB{\MBand}\to\bZ,
    \qquad
    \eta(\dif) = \sg{\eta}(\dif)/a.
\]
Notice also that the value $\eta(\dif)$ depends only on the isotopy class of $\dif$ in $\PSTB{\MBand}$, and therefore $\eta$ factors to the desired epimorphism $\eta:\PSTB{\MBand}\to\bZ$, which (for simplicity) we will denote by the same letter $\eta$.
Put
\begin{equation}\label{equ:eta_tau_b}
    \sgOrd := \eta(\tau) = c/a.
\end{equation}

Thus, roughly speaking, every $\dif\in\STB{\MBand}$ cyclically shifts the boundary edges of $\QQi{0}$, so that all shifts are integer multiples of $a$, and the period of that shift is $c$.

\subsection{Action of $\STB{\MBand}$ on disks $\YYi{i}$}
Let $\CompSet=\{\YYi{1}, \ldots, \YYi{n}\}$ be the family of $2$-disks from the \spd{\func}\ of $\MBand$.
Since $\mathop{\cup}\limits_{k=1}^{n}\YYi{k}$ is invariant with respect to $\STB{\MBand}$, we have a natural action of $\STB{\MBand}$ on $\CompSet$ by permutations.

Let us fix some orientation of each $\YYi{k}$, $k=1,\ldots,n$, and put $\PlCompSet = \CompSet \times\{\pm 1\}$.
Then the action of $\STB{\MBand}$ on $\CompSet$ extends further to an action on $\PlCompSet$ defined by the following rule:
\begin{enumerate}[label={$(*)$}, leftmargin=*]
\item\label{stab-action}
\em if $\dif\in\STB{\MBand}$ and $\YYi{k}\in\CompSet$, then $\dif(\YYi{k},+1)=(\dif(\YYi{k}),\delta)$ and $\dif(\YYi{k},-1)=(\dif(\YYi{k}),-\delta)$, where
\begin{equation*}
	\delta=
	\begin{cases}
		+1, & \text{if the restriction $\dif|_{\YYi{k}}: \YYi{k} \to \dif(\YYi{k})$ preserves orientation}, \\
		-1, & \text{otherwise}.
	\end{cases}
\end{equation*}
\end{enumerate}
Let also $\KerSAct$ be the kernel of non-effectiveness of that action of $\STB{\MBand}$ on $\PlCompSet$, i.e.\ $\KerSAct$ consists of diffeomorphisms $\dif$ such that $\dif(\YYi{i})=\YYi{i}$, for all $i=1,\ldots,n$, and $\dif$ also preserves orientation of $\YYi{i}$.

\begin{lemma}\label{lm:charKerSAct}
Let $\dif\in\STB{\MBand}$.
Then the following statements are equivalent:
\begin{enumerate}[label={\rm(\alph*)}, leftmargin=*]
\item\label{enum:lm:charKerSAct:h_in_ker} $\dif\in\KerSAct$, in other words $\dif$ preserves each $\YYi{k}$ with its orientation;
\item\label{enum:lm:charKerSAct:h_pres_Yi} $\dif$ preserves each connected component of $\MBand\setminus\CrComp$ with its orientation, i.e.\ $\cplus{2}(\pdif)=n+1$ and $\cmin{2}(\pdif)=0$;
\item\label{enum:lm:charKerSAct:h_pres_one_Yi} $\dif$ preserves at least one connected component of $\MBand\setminus\CrComp$ distinct from $\CCi{0}$ with its orientation, i.e.\ $\cplus{2}(\pdif)\geq2$;
\item\label{enum:lm:charKerSAct:h_pres_all} $\dif$ each preserves each cell of $\PrjCWPart$ with its orientation;
\item\label{enum:lm:charKerSAct:h_pres_edges} $\dif$ preserves each edge of $\CrComp$ with its orientation, i.e.\ $\cplus{1}(\pdif)=c_1$ and $\cmin{1}(\pdif)=0$;
\item\label{enum:lm:charKerSAct:eta_h__b}
$\dif\in\eta^{-1}(b\bZ)$, in other words $\dif$ preserves each boundary edge of $\QQi{0}$ with its orientation, see~\eqref{equ:eta_tau_b};
\item\label{enum:lm:charKerSAct:h_pres_one_edge} $\dif$ preserves at least one edge of $\CrComp$ with its orientation, i.e.\ $\cplus{1}(\pdif)>0$.
\end{enumerate}
In particular,
\begin{itemize}[leftmargin=*, label=$- $]
\item
\ref{enum:lm:charKerSAct:eta_h__b} implies that
$\frac{\STB{\MBand}}{\KerSAct} \cong \frac{\eta(\STB{\MBand})}{\eta(\KerSAct)} = \bZ / \sgOrd\bZ \cong \bZ_{\sgOrd}$;
\item the equivalence~\ref{enum:lm:charKerSAct:h_in_ker}$\Leftrightarrow$\ref{enum:lm:charKerSAct:h_pres_Yi} implies that the action of $\STB{\MBand}/\KerSAct \cong\bZ_{\sgOrd}$ on $\PlCompSet$ is \term{free};
\item
\ref{enum:lm:charKerSAct:h_pres_all} implies that two diffeomorphisms $\dif_1,\dif_2\in\STB{\MBand}$ induce the same permutation of cells of $\PrjCWPart$ if and only if $\dif_1^{-1}\circ\dif_2 \in\KerSAct$, i.e.\ they belong to the same adjacent class $\STB{\MBand}/\KerSAct$.
\end{itemize}
\end{lemma}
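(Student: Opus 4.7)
The approach is a cyclic chain of implications drawing on three ingredients: direct geometric identification of $\YYi{k}\subset\CCi{k}$, the definition of the shift homomorphism $\sg{\eta}$, and the Lefschetz number identities~\eqref{equ:Lefschetz_number_of_h} and~\eqref{equ:Lefschetz_number_of_spdif} just established. The easy equivalences come first: (a)$\Leftrightarrow$(b) because each $\YYi{k}$ is an invariant $\func$-adapted thickening inside $\CCi{k}$, so orientation preservation transfers in both directions; the implications (d)$\Rightarrow$(b), (d)$\Rightarrow$(e), (b)$\Rightarrow$(c), (e)$\Rightarrow$(g) are immediate; and (b)$\wedge$(e)$\Rightarrow$(d) follows since every $0$-cell is an endpoint of some $1$-cell, so preservation of all $1$-cells forces preservation of all $0$-cells, which count as orientation-preserved by convention. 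The equivalence (e)$\Leftrightarrow$(f) has the easy direction built into the definition of $\sg{\eta}$: preserving each oriented boundary edge of $\QQi{0}$ forces the lift of $\dif|_{\QQi{0}}$ to fix each interval $J_k$, i.e., $\sg{\eta}(\dif)\in c\bZ$ and hence $\eta(\dif)\in\sgOrd\bZ$.

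The core is the cluster of implications (c)$\Rightarrow$(b), (g)$\Rightarrow$(e), (f)$\Rightarrow$(e), all handled by the Lefschetz calculation. The base observation is that $\dif|_{\partial\MBand}=\id$ forces $\pdif$ to preserve $\CCi{0}/\partial\MBand$ with orientation, so $\cplus{2}(\pdif)\geq 1$ unconditionally and by Lemma~\ref{lm:inv_cells__pdif__spdif} both of its lifts in $\SphCWPart$ are $\spdif^+$-invariant. Assuming (c), an extra oriented-fixed $\CCi{k}$ contributes two more $\spdif^+$-invariant $2$-cells, so $\cplus{2}(\spdif)\geq 4$. I would then combine
\begin{equation*}
2 = \cplus{0}(\spdif) - 2\cplus{1}(\pdif) + 2\cplus{2}(\pdif)
\end{equation*}
with the Lefschetz identity~\eqref{equ:Lefschetz_number_of_h} for $\pdif$ on $\hMBand$, the bounds $\cplus{0}(\spdif)\leq 2\cplus{0}(\pdif)\leq 2c_0$ and $\cplus{i}(\pdif)+\cmin{i}(\pdif)\leq c_i$, and the structural fact that the restriction of $\pdif$ to the boundary of an oriented-fixed $2$-cell is a cyclic rotation of its boundary edges, which either fixes every edge with orientation or fixes none. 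These ingredients together will force the rotation on each such boundary to be trivial, and then propagate along adjacencies so that $\cplus{2}(\pdif)=c_2$, $\cplus{1}(\pdif)=c_1$, $\cmin{1}(\pdif)=\cmin{2}(\pdif)=0$, giving both (b) and (e). The arguments from (g) and (f) follow the same template, since any $\pdif^+$-invariant edge lies on the boundary of two $2$-cells whose cell-rotation data is constrained in the same way.

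The three ``In particular'' consequences are then formal. The isomorphism $\STB{\MBand}/\KerSAct\cong\bZ_{\sgOrd}$ follows from (f): $\ker(\eta)\subseteq\KerSAct$ because $0\in\sgOrd\bZ$, so $\eta$ factors through an isomorphism $\STB{\MBand}/\KerSAct\to\bZ/\sgOrd\bZ$. Freeness of the induced $\bZ_{\sgOrd}$-action on $\PlCompSet$ is the statement (a)$\Leftrightarrow$(b) applied to $\dif_1^{-1}\dif_2$, and the cell-permutation consequence is (d) applied to the same element. The main obstacle will be the combinatorial Lefschetz bookkeeping: tracking precisely how ``rotation on the boundary of an oriented-fixed $2$-cell'' interacts with $\cplus{0}(\spdif)$—in particular, whether each $\pdif$-fixed $0$-cell has both of its lifts fixed or swapped by $\spdif$—so that the system of inequalities genuinely tightens to equality under hypothesis (c) or (g), and does not merely admit partial preservations.
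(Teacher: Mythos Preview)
Your overall architecture is right --- the easy implications are as you say, and the Lefschetz identity on $S^2$ is indeed the key to extracting (g) from (c). But you are asking Lefschetz to do far more than it needs to, and the ``obstacle'' you worry about at the end is a red herring: once you have a single $\pdif^{+}$-invariant edge, no further inequality-chasing is required.

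The paper handles (g)$\Rightarrow$(e) by a direct connectedness argument on the graph $\CrComp$, with no reference to $2$-cells or Lefschetz numbers. If $\dif$ preserves an edge $e$ with its orientation, then $\dif$ fixes both endpoints of $e$; since $\dif$ is a diffeomorphism preserving $\func$, it preserves the cyclic order of edges at each fixed vertex, hence preserves every edge incident to those vertices with its orientation; now iterate using connectedness of $\CrComp$. This is two lines and makes your ``rotation on the boundary of a $2$-cell'' detour unnecessary. Similarly, (e)$\Rightarrow$(b) is a short direct argument you do not mention: each edge $e$ lies on the boundary of exactly two components $\CCi{i},\CCi{j}$, one ``above'' and one ``below'' the level $\func(\CrComp)$; since $\func\circ\dif=\func$, the map $\dif$ cannot swap them, so $\dif(\CCi{i})=\CCi{i}$, and preservation of the orientation of $e$ forces preservation of the orientation of $\CCi{i}$.

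With these two direct steps in hand, Lefschetz is needed only twice, and each use is a one-line contradiction: (b)$\Rightarrow$(g) via~\eqref{equ:Lefschetz_number_of_h} (if $\cplus{2}(\pdif)=n+1$ then $\cplus{1}(\pdif)=\cplus{0}(\pdif)+\cmin{1}(\pdif)+n>0$), and (c)$\Rightarrow$(g) via~\eqref{equ:Lefschetz_number_of_spdif} (if $\cplus{2}(\pdif)\geq 2$ and $\cplus{1}(\pdif)=0$ then $2=\cplus{0}(\spdif)+2\cplus{2}(\pdif)\geq 4$). There is no system of inequalities to tighten, and no need to track how $\spdif$ acts on lifts of $0$-cells.
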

\begin{proof}
The equivalence~\ref{enum:lm:charKerSAct:h_in_ker}$\Leftrightarrow$\ref{enum:lm:charKerSAct:h_pres_Yi} follows from the equality $\dif(\CrComp)=\CrComp$, while the implications \ref{enum:lm:charKerSAct:h_pres_Yi}$\Rightarrow$\ref{enum:lm:charKerSAct:h_pres_one_Yi} and \ref{enum:lm:charKerSAct:h_pres_all}$\Rightarrow$\ref{enum:lm:charKerSAct:h_pres_edges}$\Rightarrow$\ref{enum:lm:charKerSAct:eta_h__b}$\Rightarrow$\ref{enum:lm:charKerSAct:h_pres_one_edge} are evident.

The implication~\ref{enum:lm:charKerSAct:h_pres_one_edge}$\Rightarrow$\ref{enum:lm:charKerSAct:h_pres_edges} can be found in~\cite[Claim~7.1.1]{Maksymenko:AGAG:2006}.
Let us recall the arguments.
Since $\dif$ preserves some edge of $\CrComp$ with its orientation, $\dif$ also preserves the closure of this edge with its orientation, i.e.\ $\dif$ fixes the vertices on the boundary of this edge.
But $\dif$ also preserves the cyclic order of edges incident to both of its ends, and therefore $\dif$ should preserve all these edges with their orientations.
It now follows from the connectedness of $\CrComp$ that $\dif$ preserves all edges of $\CrComp$ with their orientations.

\ref{enum:lm:charKerSAct:h_pres_edges}$\Rightarrow$\ref{enum:lm:charKerSAct:h_pres_Yi}
Let $\CCi{i}$ be a connected component of $\MBand\setminus\CrComp$, and $e \subset\CrComp$ be a boundary edge of $\QQi{i} = \overline{\CCi{i}}\setminus\fSing$.
Let also $\CCi{j}$ be another component of $\MBand\setminus\CrComp$ such that $e\subset\QQi{j}$.
By~\ref{enum:lm:charKerSAct:h_pres_edges}, $\dif(e)=e$ whence $\dif$ should leave invariant $\QQi{i} \cup \QQi{j}$, and therefore the set $\CCi{i} \cup \CCi{j} = (\QQi{i} \cup \QQi{j})\setminus\CrComp$.
As noted above in~\ref{enum:Qi_prop:2comps}, one can assume that $\func(\CCi{i}) \subset \bigl(-\infty;\func(\CrComp)\bigr)$ and $\func(\CCi{j}) \subset \bigl(\func(\CrComp);+\infty\bigr)$.
Since $\func\circ\dif=\func$, so $\dif$ preserves level sets of $\func$, we see that $\dif$ can not interchange $\CCi{i}$ with $\CCi{j}$, and thus $\dif(\CCi{i})=\CCi{i}$.
Moreover, by~\ref{enum:lm:charKerSAct:h_pres_edges}, $\dif$ also preserves orientation of $e$, whence it should also preserve orientation of $\CCi{i}$.

\ref{enum:lm:charKerSAct:h_pres_Yi}$\Rightarrow$\ref{enum:lm:charKerSAct:h_pres_one_edge}
Suppose $\cplus{2}(\pdif) = c_2 = n+1$ and thus $\cmin{2}(\pdif)=0$.
Then by formula~\eqref{equ:Lefschetz_number_of_h} we have that
\[
1 = \cplus{0}(\pdif) -  \bigl( \cplus{1}(\pdif)- \cmin{1}(\pdif) \bigr) +  n+1,
\]
whence $\cplus{1}(\pdif) = \cplus{0}(\pdif)  + \cmin{1}(\pdif) + n > 0$.

\ref{enum:lm:charKerSAct:h_pres_one_Yi}$\Rightarrow$\ref{enum:lm:charKerSAct:h_pres_one_edge}
Suppose $\cplus{2}(\pdif)\geq2$ but~\ref{enum:lm:charKerSAct:h_pres_one_edge} fails, so $\cplus{1}(\pdif)=0$.
Then by Lemma~\ref{lm:inv_cells__pdif__spdif}, we have that $\cplus{2}(\spdif)=2\cplus{2}(\pdif)\geq4$, $\cplus{1}(\spdif)=2\cplus{1}(\pdif) = 0$, whence by~\eqref{equ:Lefschetz_number_of_spdif}
\[
2 = \cplus{0}(\spdif) - \cplus{1}(\spdif) + \cplus{2}(\spdif)
    = \cplus{0}(\spdif) + \cplus{2}(\spdif)
    \geq  \cplus{2}(\spdif) \geq 4,
\]
which is impossible.
Hence, \ref{enum:lm:charKerSAct:h_pres_one_edge} should hold.

\ref{enum:lm:charKerSAct:h_pres_Yi}\&\ref{enum:lm:charKerSAct:h_pres_edges}\,$\Rightarrow$\,\ref{enum:lm:charKerSAct:h_pres_all}
By assumption, all $1$- and $2$-cells are $\dif^{+}$-invaraint.
As each $0$-cell $v$ of $\CrComp$ belongs to the closure of some $1$-cell $e$, and $\dif$ preserves orientation of $e$, we see that $\dif$ also fixes $v$.
\end{proof}

\subsection{Disks of types \ref{dt1} and \ref{dt2}}
By Lemma~\ref{lm:charKerSAct} we have an effective free action of the quotient $\frac{\STB{\MBand}}{\KerSAct} \cong \frac{\PSTB{\MBand}}{\pi_0\KerSAct} \cong  \bZ_{\sgOrd}$ on $\PlCompSet$, so we can regard it as a cyclic subgroup of the permutation group of $\PlCompSet$.

Let $\gdif\in\PSTB{\MBand}$ be any element with $\eta(\gdif)=1$.
Then its class in the quotient $\bZ_{\sgOrd}$ is a generator of that group, so it is represented by some bijection $\sg{\gdif}:\PlCompSet\to\PlCompSet$.

Hence, $\sg{\gdif}^{\sgOrd}=\id_{\PlCompSet}$, $\sg{\gdif}^k$ for $k=1,\ldots,\sgOrd-1$ has no fixed points, and every orbit of the action of $\bZ_{\sgOrd}$ consists of $\sgOrd$ elements, so, in particular, $\sgOrd$ divides $2n$ (the order of $\PlCompSet$).

\begin{lemma}\label{lm:action_g_on_hY}
\begin{enumerate}[label={\rm(\alph*)}, leftmargin=*]
\item\label{enum:lm:action_g_on_hY:star}
If $\sg{\gdif}(\YYi{i},1)=(\YYi{k},\delta)$ for some $i,k\in\{1,\ldots,n\}$ and $\delta\in\{\pm1\}$, then $\sg{\gdif}(\YYi{i},-1)=(\YYi{k},-\delta)$.

\item\label{enum:lm:action_g_on_hY:disk_types}
Let $(\YYi{i_0},\delta_{i_0})\in\PlCompSet$, and $(\YYi{i_k},\delta_{i_k}) = \sg{\gdif}^k(\YYi{i_0},\delta_{i_0})$, $k=0,\ldots,\sgOrd-1$, be all elements of its orbit.
Then exactly one of the following two possibilities holds:
\begin{enumerate}[leftmargin=*, label={\rm(T\arabic*)}]
\item\label{dt1}
either all disks $\YYi{i_0},\ldots,\YYi{i_{\sgOrd-1}}$ are mutually distinct,
\item\label{dt2}
$\sgOrd$ is even, the disks $\YYi{i_0},\ldots,\YYi{i_{\mel-1}}$ are mutually distinct, where $\mel=\sgOrd/2$, and $(\YYi{i_{\mel}},\delta_{i_{\mel}}) = (\YYi{i_0},-\delta_{i_0})$, so, due to~\ref{enum:lm:action_g_on_hY:star},
\[
\begin{matrix}
    (\YYi{i_0}, \delta_{i_0}),  & (\YYi{i_1}, \delta_{i_1}),  & \ldots, & (\YYi{i_{\mel-1}}, \delta_{i_{\mel-1}}), \\
    (\YYi{i_0},-\delta_{i_0}),  & (\YYi{i_1},-\delta_{i_1}),  & \ldots, & (\YYi{i_{\mel-1}},-\delta_{i_{\mel-1}}),
\end{matrix}
\]
are consecutive elements of the orbit of $(\YYi{i_0},\delta_{i_0})$.
\end{enumerate}

\item\label{enum:lm:action_g_on_hY:conj_by_g}
Suppose $\gdif(\YYi{i})=\YYi{j}$ for some $i,j$.
Then we have the following isomorphism:
\[
    \PST{\YYi{i}} \to \PST{\YYi{j}},
    \qquad
    t \mapsto \gdif t \gdif^{-1},
\]
which sends $\PFST{\YYi{i}}$ onto $\PFST{\YYi{j}}$.
\end{enumerate}
\end{lemma}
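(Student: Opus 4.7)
The plan is to prove (a) directly from the definition of the action, to bootstrap it into a statement about all powers of $\sg{\gdif}$, and then to combine this with freeness of the $\bZ_{\sgOrd}$-action (established in Lemma \ref{lm:charKerSAct}) to force the dichotomy in (b). Part (c) is an essentially formal conjugation argument.

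For \ref{enum:lm:action_g_on_hY:star}, I would pick any representative $\dif \in \STB{\MBand}$ of the class $\gdif$ (and thus of $\sg{\gdif}$). The defining rule $(*)$ for the action on $\PlCompSet$ uses only the image disk $\dif(\YYi{i})$ and a single sign $\delta$ encoding whether $\dif|_{\YYi{i}}$ preserves or reverses orientation; hence by construction $(\YYi{i},+1)$ and $(\YYi{i},-1)$ are sent to the same disk with opposite second coordinates, which is exactly the claim. By induction on $n \ge 0$, the analogous statement then holds for every power $\sg{\gdif}^n$, because composing two bijections of $\PlCompSet$ which both satisfy this ``sign-coherence'' yields another one.

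For \ref{enum:lm:action_g_on_hY:disk_types}, freeness of the $\bZ_{\sgOrd}$-action gives that the map $k \bmod \sgOrd \mapsto \sg{\gdif}^k(\YYi{i_0},\delta_{i_0})$ is an injection from $\bZ_{\sgOrd}$ onto the orbit. Suppose T1 fails and $\YYi{i_j} = \YYi{i_k}$ for some $0 \le j < k \le \sgOrd-1$. Since the two pairs themselves are distinct in the orbit, we must have $\delta_{i_j} = -\delta_{i_k}$. Setting $r = k-j$, we obtain $\sg{\gdif}^{r}(\YYi{i_j},\delta_{i_j}) = (\YYi{i_j},-\delta_{i_j})$, and one more application of $\sg{\gdif}^{r}$ together with the extended version of (a) yields $\sg{\gdif}^{2r}(\YYi{i_j},\delta_{i_j}) = (\YYi{i_j},\delta_{i_j})$. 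Freeness forces $\sgOrd \mid 2r$, and since $0<r<\sgOrd$ this gives $r = \sgOrd/2$; in particular $\sgOrd$ is even and $r$ is the smallest repetition distance. Setting $\mel := \sgOrd/2$, minimality of $\mel$ shows that $\YYi{i_0},\ldots,\YYi{i_{\mel-1}}$ are pairwise distinct. Finally, writing $(\YYi{i_k},\delta_{i_k}) = \sg{\gdif}^{k}(\YYi{i_0},\delta_{i_0})$ and commuting $\sg{\gdif}^{\mel}$ with $\sg{\gdif}^{k}$, the extended (a) gives $\sg{\gdif}^{\mel}(\YYi{i_k},\delta_{i_k}) = (\YYi{i_k},-\delta_{i_k})$ for every $k=0,\ldots,\mel-1$, which is the description in T2.

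For \ref{enum:lm:action_g_on_hY:conj_by_g}, choose a representative $\dif \in \STB{\MBand}$ of $\gdif$. Since $\dif(\YYi{i}) = \YYi{j}$, the restriction $\dif|_{\YYi{i}}\colon \YYi{i}\to \YYi{j}$ is a diffeomorphism carrying $\partial\YYi{i}$ onto $\partial\YYi{j}$, and for any $t \in \ST{\YYi{i}}$ the conjugate $\dif t \dif^{-1}$ is a self-diffeomorphism of $\YYi{j}$ fixed on $\partial\YYi{j}$ and satisfying $\func \circ (\dif t \dif^{-1}) = \func$ on $\YYi{j}$ because $\func\circ\dif = \func$. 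Conjugation therefore induces a group isomorphism $\ST{\YYi{i}} \to \ST{\YYi{j}}$ whose inverse is conjugation by $\dif^{-1}$, and this map passes to an isomorphism on $\pi_0$ that is independent of the representative $\dif$, yielding the asserted isomorphism $\PST{\YYi{i}} \to \PST{\YYi{j}}$. That this isomorphism sends $\PFST{\YYi{i}}$ onto $\PFST{\YYi{j}}$ follows because conjugation preserves the two defining conditions of $\FolStabilizer{\func|_{\cdot}}$: invariance of level components is preserved since $\dif \in \Stabilizer{\func}$ sends level components to level components, and identity of tangent maps at degenerate local extrema is preserved by the chain rule. The main technical care lies in (b), specifically in choosing $r$ minimal and organizing the orbit into two halves differing by a sign; everything else reduces to manipulating the definitions.
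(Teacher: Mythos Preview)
Your proof is correct and follows essentially the same approach as the paper's: part~(a) is read off from the definition~$(*)$, part~(b) uses freeness of the $\bZ_{\sgOrd}$-action on $\PlCompSet$ together with~(a) to force any repetition distance to equal $\sgOrd/2$, and part~(c) is dismissed in the paper as ``evident'' while you spell out the conjugation argument. Your exposition of~(b) is in fact slightly more careful than the paper's (which contains a minor slip, writing ``$2\mel$ must divide $\sgOrd$'' where ``$\sgOrd$ divides $2\mel$'' is meant).
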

\begin{proof}
Statement~\ref{enum:lm:action_g_on_hY:star} is just the same as~\ref{stab-action}, and~\ref{enum:lm:action_g_on_hY:conj_by_g} is evident.

\ref{enum:lm:action_g_on_hY:disk_types}
Suppose~\ref{dt1} fails, so $\gdif^{k}(\YYi{i_0})=\YYi{i_k}=\YYi{i_l}=\gdif^{l}(\YYi{i_0})$ for some $k<l \in\{0,1,\ldots,\sgOrd-1\}$.
Denote $\mel = l-k$.
Then $\YYi{i_0} = \gdif^{\mel}(\YYi{i_0}) = \YYi{i_u}$.
Since the action of $\sg{\gdif}$ on $\PlCompSet$ is free and $\mel<\sgOrd$, we should have that $(\YYi{i_{0}},\delta_{i_{0}})\not=(\YYi{i_{\mel}},\delta_{i_{\mel}})=(\YYi{i_{0}},\delta_{i_{\mel}})$, whence $\delta_{i_0} = - \delta_{i_{\mel}}$.
But then
\[
    \sg{\gdif}^{2\mel}(\YYi{i_0},\delta_{i_0}) =
    \sg{\gdif}^{\mel}(\YYi{i_0},-\delta_{i_0})
    \stackrel{\ref{enum:lm:action_g_on_hY:star}}{=} (\YYi{i_0},\delta_{i_0}),
\]
i.e.\ $\sg{\gdif}^{2\mel}$ has a fixed element $(\YYi{i_0},\delta_{i_0})$, and therefore $2\mel$ must divide $\sgOrd$.
Since $\mel<\sgOrd$, we should have that $2\mel=\sgOrd$.
\end{proof}

Thus, in the case~\ref{dt1}, if $\dif(\Yman_{i_k})=\Yman_{i_k}$ for some $\dif\in\STB{\MBand}$ and $k=0,\ldots,\sgOrd-1$, then $\dif$ must preserve orientation of $\Yman_{i_k}$.
On the other hand, in the case~\ref{dt2} for each $k$ there exists $\dif\in\STB{\MBand}$ such that $\dif(\Yman_{i_k})=\Yman_{i_k}$ and $\dif$ reverses orientation of $\Yman_{i_k}$.

It will be convenient to say that a pair $(\Yman_{i_0},\delta_{0})$ as well as its orbit is of type~\ref{dt1} or~\ref{dt2} depending on the corresponding cases.
Moreover, since $\sg{\gdif}(\Yman_{i_0},\delta_{0})$ and $(\Yman_{i_0},-\delta_{0})$ have the same type, we can even say that the disk $\Yman_{i_0}$ itself has the corresponding type.

\begin{lemma}\label{lm:b_is_odd}
Suppose all disks $\YYi{1},\ldots,\YYi{n}$ are of type~\ref{dt1}.
There exists a $\PrjCWPart$-cellular homeomorphism $\qdif:\hMBand\to\hMBand$ of order $\sgOrd$ such that each of its iterations $\qdif^{i}$, $i=1,\ldots,\sgOrd-1$, has a unique fixed point $x^{*}$.
Hence, $\qdif$ yields a free action of $\bZ_{\sgOrd}$ on $\hMBand\setminus x^{*} = \Int{\MBand}$, and this implies that $\sgOrd$ must be odd.
\end{lemma}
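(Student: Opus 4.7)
The plan splits into constructing a cellular homeomorphism $\qdif$ of order exactly $\sgOrd$ via the cone-change mechanism of Section~\ref{sect:quasi-cones}, and then deducing the parity statement from the covering theory of the M\"obius band. First I would fix a representative $\dif_{0}\in\STB{\MBand}$ with $\eta(\dif_{0})=1$, so that its class in $\PSTB{\MBand}/\pi_{0}\KerSAct\cong\bZ_{\sgOrd}$ generates the cyclic quotient, and denote by $\pdif_{0}\colon\hMBand\to\hMBand$ its induced $\PrjCWPart$-cellular extension. Under the assumption that every orbit is of type~\ref{dt1}, the cellular action of the cyclic group generated by $\pdif_{0}$ on cells of $\PrjCWPart$ is \emph{free} away from the preserved $2$-cell $\CCi{0}/\partial\MBand$: if $\pdif_{0}^{k}$ preserved an edge or a disk $\YYi{i}$ with $i\geq 1$ for some $1\leq k<\sgOrd$, the equivalences \ref{enum:lm:charKerSAct:h_pres_one_Yi}/\ref{enum:lm:charKerSAct:h_pres_one_edge}$\Leftrightarrow$\ref{enum:lm:charKerSAct:eta_h__b} of Lemma~\ref{lm:charKerSAct} would force $\pdif_{0}^{k}\in\KerSAct$, i.e.\ $k\in\sgOrd\bZ$, a contradiction. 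The type~\ref{dt1} hypothesis is precisely what rules out the remaining orientation-reversing case of Lemma~\ref{lm:action_g_on_hY}\ref{enum:lm:action_g_on_hY:disk_types}.

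With this freeness I would assemble $\qdif$ skeleton by skeleton. On the $0$-skeleton it simply copies the free cellular permutation of $\pdif_{0}$. On each orbit of $1$-cells $\{e,\pdif_{0}(e),\ldots,\pdif_{0}^{\sgOrd-1}(e)\}$ I fix compatible parameterisations $[0,1]\to\pdif_{0}^{k}(e)$ matching the already chosen action on endpoints, and define $\qdif$ between consecutive edges to be the identity in these parameterisations, so that $\qdif^{\sgOrd}=\id$ on the entire $1$-skeleton. Each $2$-cell of $\PrjCWPart$ is a quasi-cone in the sense of Section~\ref{sect:quasi-cones}, whose base (a cycle of edges) is already mapped by $\qdif$; the cone-change construction then promotes this boundary homeomorphism to a unique homeomorphism of the $2$-cell fixing its vertex. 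Functoriality of the cone-change makes $\qdif^{\sgOrd}$ the cone-change of the identity, and hence equal to the identity. For $1\leq k<\sgOrd$, freeness eliminates fixed points of $\qdif^{k}$ on every cell except possibly inside $\CCi{0}/\partial\MBand$; there $\qdif^{k}$ is the cone over the cyclic shift of the $c$ base-edges by $ka$ positions, which is fixed-point-free on the base circle since $\sgOrd=c/a$, leaving $\bdpt$ as the unique fixed point of $\qdif^{k}$.

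For the parity statement I would form the quotient $X:=\Int{\MBand}/\bZ_{\sgOrd}$, a connected non-compact $2$-manifold with $\chi(X)=0$ and a single end (being a finite-sheeted cover of the one-ended $\Int{\MBand}$). Since every covering of an orientable surface is orientable and $\Int{\MBand}$ is not orientable, $X$ cannot be the open annulus; the classification of non-compact surfaces with $\chi=0$ and one end then forces $X\cong\Int{\MBand}$. Realising $\pi_{1}(\MBand)=\bZ$ as the glide reflection $(x,y)\mapsto(x+1,-y)$ acting on $\bR^{2}$, the $\sgOrd$-fold covering $\Int{\MBand}\to\Int{\MBand}$ corresponds to the subgroup $\sgOrd\bZ$, whose action is the $\sgOrd$-th power of this glide; that power is a pure translation, and hence yields an orientable quotient, precisely when $\sgOrd$ is even. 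Non-orientability of $\Int{\MBand}$ therefore forces $\sgOrd$ to be odd.

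The main obstacle is the cellular assembly in the middle paragraph: it is there that the type~\ref{dt1} hypothesis is actually consumed (through the freeness of the $\bZ_{\sgOrd}$-action on cells), and the simultaneous choice of equivariant parameterisations on edges together with their cone-change extensions to $2$-cells is what makes the construction produce a bona-fide homeomorphism of period exactly $\sgOrd$ rather than merely a cellular permutation. Everything afterwards — the fixed-point analysis and the oddness conclusion — is comparatively soft.
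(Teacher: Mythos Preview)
Your construction of the periodic cellular homeomorphism $\qdif$ is essentially identical to the paper's: both choose a generator of $\bZ_{\sgOrd}$, verify via Lemma~\ref{lm:charKerSAct} that its cellular action is free away from $\CCi{0}$, impose compatible parameterisations on edge orbits (the paper phrases this as ``choose a metric so every edge has length~$1$''), and extend over $2$-cells using the cone-change of Section~\ref{sect:quasi-cones}.

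The parity deduction, however, follows a genuinely different route. The paper first observes that $\qdif$, being a cone map near $x^{*}$, has a $\qdif$-invariant $2$-disk neighborhood $\Uman$ of $x^{*}$, and works with the \emph{compact} M\"obius band $\overline{\hMBand\setminus\Uman}$; it identifies the quotient as a M\"obius band via $\pi_{1}$ (virtually~$\bZ$, one boundary component, non-orientable), and then argues that if $\sgOrd$ were even the covering would lift through the orientation double cover $\Circle\times[0,1]\to\MBand$, forcing a covering of a cylinder by a M\"obius band. You instead stay with the \emph{open} $\Int{\MBand}$, identify the quotient by Euler characteristic, number of ends, and non-orientability (invoking the classification of non-compact surfaces), and then compute directly in the glide-reflection model of the universal cover. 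Both arguments are correct; the paper's is a bit more elementary in that it avoids the classification of open surfaces, while yours avoids the extra step of producing an invariant compact sub-M\"obius-band.
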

\begin{proof}
{\bf Construction of $\qdif$.}
Let $\gdif\in\STB{\MBand}$ be any generator of the group $\bZ_{\sgOrd} = \STB{\MBand}/\KerSAct$.
Then $\gdif^{i}\not\in\KerSAct$ for $i=1,\ldots,\sgOrd-1$, whence by
Lemma~\ref{lm:charKerSAct}\ref{enum:lm:charKerSAct:h_pres_all}, $\gdif^{i}$ has no $(\gdif^{i})^{+}$-invariant cells except for $\CCi{0}$.

We will define $\qdif$ so that it will interchange the cells of $\PrjCWPart$ in the same way as $\gdif$.
Let us also mention that by Lemma~\ref{lm:charKerSAct}, all representatives of the same adjacent class in $\STB{\MBand}/\KerSAct$ induce the same permutations of cells of $\PrjCWPart$ and map those cells with the same orientations.
In particular, $\qdif$ will actually depend only on the class $[\gdif]\in\bZ_{\sgOrd}$.
The proof is close to the construction described in~\cite[Theorem~2.2]{Feshchenko:MFAT:2016}, and we will just sketch the arguments.
\begin{enumerate}[label={\rm(\arabic*)}, start=0, leftmargin=*]
\item
For every $0$-cell $\px$ of CW-partition $\PrjCWPart$, that is a critical point of $\func$ belonging to $\CrComp$, we set $\qdif(\px) = \gdif(\px)$.
\item
Further, choose a metric on $\CrComp$ such that each edge has length $1$.
Now, if $e$ is an oriented edge of $\CrComp$ and $e' = \gdif(e)$ is its image, then we define $\qdif|_{e}: e \to e'$ to be a unique isometry which preserves (reverses) orientations in accordance with $\gdif:e\to e'$.
This gives a free isometric action of $\qdif$ on $\CrComp$.
\item
Finally, if $\Lman_i$ and $\Lman_j$ are closures of $2$-cells of $\PrjCWPart$ and $\gdif(\Lman_i)=\Lman_j$, then we define $\qdif:\Lman_i\to\Lman_j$ as the cone change $\qdif:=\gdif'$ of $\gdif$, see Section~\ref{sect:quasi-cones}.
We can also assume that $x^{*} = \hMBand\setminus\Int{\MBand}$ is the vertex of $\CCi{0}$, and therefore $\qdif(x^{*})=x^{*}$.
\end{enumerate}
Due to Lemma~\ref{lm:charKerSAct}\ref{enum:lm:charKerSAct:h_pres_all}, each iteration $\qdif^{i}$, $i=1,\ldots,\sgOrd-1$, of $\qdif$ has a unique fixed point $x^{*}$.
Moreover, since $\qdif$ is a cone change, the fixed point $x^{*}$ has a $\qdif$-invariant small $2$-disk neighborhood $\Uman$, whence the M\"obius band $\MBand:=\overline{\hMBand\setminus\Uman}$ is an invariant under $\qdif$.
Then the action of $\bZ_{\sgOrd}$ on $\MBand$ is free, and so the induced quotient map $p:\MBand\to\MBand/\qdif$ is a $\sgOrd$-sheeted covering.

{\bf Proof that $\sgOrd$ is odd.}
Suppose, in general, that we have a $\sgOrd$-sheeted covering map $p:\MBand\to\Aman$.
Then $\Aman$ must be a non-orientable surface with one boundary component, while its fundamental group $\pi_1\Aman$ contains a free abelian subgroup $p(\pi_1\MBand) \cong \bZ$ of finite index $\sgOrd$.
Hence, $\Aman$ is a M\"obius band as well, and therefore it has an orientable double cover $\lambda:\Circle\times[0;1]\to\Aman$.
Now if $\sgOrd$ is even, then
\[
    p(\pi_1\MBand) = \sgOrd\bZ \subset 2\bZ = \lambda\bigl(\pi_1(\Circle\times[0;1])\bigr) \subset \pi_1\Aman,
\]
and by theorem on existence of lifting, there exists a map $\sg{p}:\MBand\to\Circle\times[0;1]$ such that $p = \lambda\circ\sg{p}$.
Since $\lambda$ and $p$ are coverings, $\sg{p}$ must be a covering as well, which is impossible.
Hence, $\sgOrd$ should be odd.
\end{proof}

\section{Proof of Theorem~\ref{th:class:Mobius_band}}\label{sect:proof:th:class:Mobius_band}
Let $\func\in\FSpR{\MBand}$.
We should prove that there exist groups $\Agrp, \Ggrp, \Hgrp \in \classGroups$, an automorphism $\gamma:\Hgrp\to\Hgrp$ with $\gamma^{2}=\id_{\Hgrp}$, and $\mel\geq1$ such that
\begin{equation}\label{equ:pi0Sprf_A_GHZ__MBand:in_the_proof}
    \PSTB{\MBand} \ \equiv \ \pi_0\StabilizerIsotId{\func,\partial\MBand} \ \cong \ \pi_1\Orbit{\func} \ \cong \ \Agrp \ \times  \ \WrGHZ{\Ggrp}{\Hgrp}{\gamma}{\mel}.
\end{equation}

\subsection{Eliminating the group $\Agrp$}
First we reduce the problem to the situation when $\Agrp=\UnitGroup$.
Put
\[
    \sg{\MBand} \ := \  \overline{\MBand\setminus\YYi{0}}
                \ =  \  \regU{\CrComp} \,\cup\, \mathop{\cup}\limits_{i=1}^{n}\YYi{i},
\]
see Figure~\ref{fig:cr_level_decomp}.
Evidently, $\sg{\MBand}$ is still a M\"obius band and $\partial\sg{\MBand}$ is a contour of $\func$, whence $\sg{\MBand}$ is $\func$-adapted subsurface, and therefore the restriction $\sg{\func}:=\func|_{\sg{\MBand}}$ belongs to $\FSpR{\sg{\MBand}}$.
The following lemma is easy.
It can be seen from Figure~\ref{fig:cr_level_decomp}, and we leave it for the reader:
\begin{lemma}\label{lm:reduct_sgB}
Let $\xi(\sg{\func})=\{\sg{\Uman}_{\sg{\CrComp}}, \sg{\Yman}_0,\sg{\Yman}_1,\ldots,\sg{\Yman}_{\sg{n}}\}$ be any \spd{\sg{\func}}\ of $\sg{\MBand}$.
Then $\CrComp = \sg{\CrComp}$, $\xi(\sg{\func})$ contains the same number of $2$-disks, i.e.\ $n = \sg{n}$, while the cylinder $\sg{\Yman}_0$ contains no critical points of $\sg{\func}$.
\end{lemma}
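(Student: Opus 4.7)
The plan is to prove the three assertions $\CrComp = \sg{\CrComp}$, $n = \sg{n}$, and the absence of critical points of $\sg{\func}$ in $\sg{\Yman}_{0}$, by combining the uniqueness of the special contour from Lemma~\ref{lm:unique_cr_level} with the topological observation that the regular neighborhood of a special contour in a M\"obius band must be non-orientable.

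For the first claim, I would apply Lemma~\ref{lm:unique_cr_level} to $\sg{\func}$ to obtain its unique special contour $\sg{\CrComp}$ together with an invariant $\sg{\func}$-regular neighborhood $\sg{\Uman}_{\sg{\CrComp}}$. The key input is that $\sg{\Uman}_{\sg{\CrComp}}$ must be non-orientable: $\overline{\sg{\MBand}\setminus\sg{\Uman}_{\sg{\CrComp}}}$ is a disjoint union of a cylinder and disks (all orientable), and each piece is glued to $\sg{\Uman}_{\sg{\CrComp}}$ along a single boundary circle, so orientability of $\sg{\Uman}_{\sg{\CrComp}}$ would propagate compatibly to every glued piece and yield an orientation of $\sg{\MBand}$, contradicting its non-orientability. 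Now $\CrComp$ lies in $\Int{\sg{\MBand}}$ and is clearly a critical contour of $\sg{\func}$; if one had $\sg{\CrComp}\neq \CrComp$ then, since $\regU{\CrComp}\setminus\CrComp$ is free of critical points of $\func$, the contour $\sg{\CrComp}$ would have to lie inside one of the disks $\Int{\YYi{j}}$ for some $j\in\{1,\dots,n\}$, which would allow $\sg{\Uman}_{\sg{\CrComp}}$ to be chosen inside the orientable set $\Int{\YYi{j}}$~--- a contradiction. Therefore $\sg{\CrComp}=\CrComp$.

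For $\sg{n}=n$ and the final claim, I would pick a $\STB{\sg{\MBand}}$-invariant $\sg{\func}$-regular neighborhood $\sg{\Uman}_{\sg{\CrComp}} \subset \Int{\regU{\CrComp}}$ of $\CrComp$, allowable once $\sg{\CrComp}=\CrComp$ has been established by the ``arbitrary invariant neighborhood'' clause of Lemma~\ref{lm:unique_cr_level}. Such a neighborhood carries the same $n+1$ boundary circles as $\regU{\CrComp}$, namely one facing $\partial \sg{\MBand}$ and one facing each $\YYi{i}$ for $i=1,\dots,n$. These $n+1$ circles bound $n+1$ components of $\overline{\sg{\MBand}\setminus\sg{\Uman}_{\sg{\CrComp}}}$: an annular collar of $\partial \sg{\MBand}$, which plays the role of the cylinder $\sg{\Yman}_{0}$, and, for each $i=1,\dots,n$, the disk $\YYi{i}$ together with an annular collar glued along $\partial \YYi{i}$ (still a disk). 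Hence $\sg{n}=n$, and $\sg{\Yman}_{0}$ is contained in the annular piece of $\regU{\CrComp}\setminus\CrComp$ adjacent to $\partial \sg{\MBand}$, which is free of critical points by definition of an $\func$-regular neighborhood. The main subtle point is the non-orientability of $\sg{\Uman}_{\sg{\CrComp}}$, which relies on the elementary fact that gluing orientable compact surfaces along single boundary circles yields an orientable surface; everything else is then a direct reading of the picture.
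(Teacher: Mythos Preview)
The paper does not prove this lemma --- it is explicitly left to the reader as ``easy'' and ``seen from Figure~\ref{fig:cr_level_decomp}''. Your argument is correct and supplies the details in a clean way via the non-orientability of the regular neighborhood.

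Two minor points are worth tightening. First, for the contradiction in the proof of $\sg{\CrComp}=\CrComp$ you implicitly use that any two $\sg{\func}$-regular neighborhoods of $\sg{\CrComp}$ have the same orientability type (they differ only by annuli glued along boundary circles), so that the small neighborhood inside $\Int{\YYi{j}}$ need not be $\STB{\sg{\MBand}}$-invariant for the argument to go through. Second, the lemma is stated for an \emph{arbitrary} decomposition $\xi(\sg{\func})$, whereas in your second paragraph you choose a particular one with $\sg{\Uman}_{\sg{\CrComp}}\subset\Int{\regU{\CrComp}}$; this is harmless, since once $\sg{\CrComp}=\CrComp$ the number $\sg{n}+1$ equals the number of components of $\sg{\MBand}\setminus\CrComp$, and for any choice of neighborhood the cylinder $\sg{\Yman}_0$ lies in the component of $\sg{\MBand}\setminus\CrComp$ containing $\partial\sg{\MBand}$, which is exactly the annular piece of $\regU{\CrComp}\setminus\CrComp$ you identify.
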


Notice that the regular contour $\partial\sg{\MBand} = \sg{\MBand} \cap \YYi{0}$ of $\func$
\begin{itemize}[label={$- $}]
\item cuts $\MBand$ into two subsurfaces $\YYi{0}$ and $\sg{\MBand}$, one of which (namely $\YYi{0}$) is a cylinder;
\item and is also invariant under $\Stabilizer{\func}$, since by Lemma~\ref{lm:unique_cr_level}, $\YYi{0}$ and $\sg{\MBand}$ are $\Stabilizer{\func}$-invariant.
\end{itemize}
Then, due to~\cite[Theorem~5.5(3)]{Maksymenko:TA:2020}, the latter two properties imply that the following homotopy equivalence holds:
\[
    \StabilizerIsotId{\func,\partial\MBand}
    \simeq
    \StabilizerIsotId{\func|_{\sg{\MBand}},\partial\sg{\MBand}}
    \times
    \StabilizerIsotId{\func|_{\YYi{0}},\partial\YYi{0}},
\]
whence we get an isomorphism of the corresponding $\pi_0$-groups:
\[
    \pi_0\StabilizerIsotId{\func,\partial\MBand}
    \cong
    \pi_0\StabilizerIsotId{\func|_{\sg{\MBand}},\partial\sg{\MBand}}
    \times
    \pi_0\StabilizerIsotId{\func|_{\YYi{0}},\partial\YYi{0}}.
\]
Denote $\Agrp := \pi_0\StabilizerIsotId{\func|_{\YYi{0}},\partial\YYi{0}}$.
Then by Theorem~\ref{th:class:oriented_case}, $\Agrp = \pi_0\StabilizerIsotId{\func|_{\YYi{0}},\partial\YYi{0}} = \pi_1\Orbit{\func|_{\YYi{0}}}\in\classGroups$.

Therefore, replacing $\sg{\MBand}$ with $\MBand$ and taking to account Lemma~\ref{lm:reduct_sgB}, it remains to prove the following statement:
\begin{itemize}
\item[$- $]
\term{if the cylinder $\YYi{0}$ of the \spd{\func}\ contains no critical points of $\func$, then
\[
    \PST{\MBand}=\pi_0\StabilizerIsotId{\func|_{\MBand},\partial\MBand} \cong \WrGHZ{\Ggrp}{\Hgrp}{\gamma}{\mel}
\]
for some $\Ggrp,\Hgrp,\gamma,\mel$ as in~\eqref{equ:pi0Sprf_A_GHZ__MBand:in_the_proof}.}
\end{itemize}
Thus, we will assume further that $\YYi{0}$ contains no critical points of $\func$.

\subsection{Several subgroups of $\PSTB{\MBand}$}
Recall that we denoted by $\KerSAct$ the normal subgroup of $\STB{\MBand}$ consisting of diffeomorphisms $\dif$ such that $\dif(\YYi{i})=\YYi{i}$, for all $i=1,\ldots,n$, and $\dif$ also preserves orientation of $\YYi{i}$.
Consider also another two subgroups
\[
    \StabilizerNbh{\func,\regU{\CrComp}\cup\YYi{0}}
    \ \subset \
    \StabilizerNbh{\func,\regU{\CrComp}\cup\partial\MBand}
\]
of $\STB{\MBand}$ consisting of diffeomorphisms fixed respectively near $\regU{\CrComp}\cup\YYi{0}$ and near $\regU{\CrComp}\cup\partial\MBand$.
Equivalently, this means that such diffeomorphisms are supported respectively in $\mathop{\cup}\limits_{i=1}^{n}\Int{\YYi{i}}$ and $\mathop{\cup}\limits_{i=0}^{n}\Int{\YYi{i}}$.
Hence, they are contained in $\KerSAct$, and thus we have the following inclusions:
\begin{equation}\label{equ:subgroups_of_STfB}
    \StabilizerNbh{\func,\regU{\CrComp}\cup\YYi{0}}
    \ \subset \
    \StabilizerNbh{\func,\regU{\Kman}\cup\partial\MBand}
    \ \subset \
    \KerSAct
    \ \subset \
    \STB{\MBand}.
\end{equation}
\begin{lemma}[{\rm\cite[Lemma~8.2]{MaksymenkoKuznietsova:PIGC:2019}}]\label{lm:pi0Ker}
The inclusion $\kappa:\StabilizerNbh{\func,\regU{\Kman}\cup\partial\MBand} \subset \KerSAct$ is a homotopy equivalence.
In particular, it yields an isomorphism
\begin{equation}\label{equ:pi0SfUK_pi0SfY}
    \kappa_0:\pi_0\StabilizerNbh{\func,\regU{\Kman}\cup\partial\MBand} \cong \pi_0\KerSAct.
\end{equation}
\end{lemma}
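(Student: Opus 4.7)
The plan is to construct a deformation retraction of $\KerSAct$ onto the subgroup $\StabilizerNbh{\func,\regU{\CrComp}\cup\partial\MBand}$, carried out in two stages: first I trivialize the action on the critical contour $\CrComp$, and then I push the resulting diffeomorphism to the identity on a whole neighborhood of $\regU{\CrComp}\cup\partial\MBand$.

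For the first stage, Lemma~\ref{lm:charKerSAct}\ref{enum:lm:charKerSAct:h_pres_all} guarantees that every $\dif\in\KerSAct$ fixes every vertex of $\CrComp$ and preserves each edge of $\CrComp$ setwise with its orientation. Hence $\rho(\dif):=\dif|_{\CrComp}$ takes values in the group $\mathcal{E}$ of self-homeomorphisms of $\CrComp$ with these properties, and $\mathcal{E}$ is isomorphic to a product over the edges of $\CrComp$ of copies of $\Diff([0;1],\partial[0;1])$, whence is contractible. Fixing a canonical continuous contraction $R\colon\mathcal{E}\times[0;1]\to\mathcal{E}$ to $\id_{\CrComp}$, I will lift $R(\rho(\dif),\cdot)$ to an $\func$-preserving isotopy of $\dif$ supported in $\regU{\CrComp}$ and staying inside $\KerSAct$. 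Away from the critical vertices the lifting is produced from $\func$-flow boxes on $\regU{\CrComp}\setminus\fSing$, while the local homogeneous model of axiom~\ref{axiom:Hm} together with the cone-change construction of Section~\ref{sect:quasi-cones} handles the extension across a neighborhood of every critical vertex. After this stage we may assume $\dif|_{\CrComp}=\id_{\CrComp}$.

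For the second stage, let $\Xman:=\partial\regU{\CrComp}\cup\partial\MBand$; this is a disjoint union of regular contours of $\func$, hence a genuine $\func$-adapted $1$-submanifold. Choose an $\func$-regular neighborhood $\regU{\Xman}\subseteq\regU{\CrComp}\cup W$, where $W$ is an $\func$-adapted collar of $\partial\MBand$. Lemma~\ref{lm:Snb_S} then provides a homotopy equivalence between the subgroups of $\Stabilizer{\func}$ consisting of elements equal to the identity on $\regU{\Xman}$ and those equal to the identity on some neighborhood of $\Xman$. Applying this together with Lemma~\ref{lm:h_Sidf__SidfX} to our $\dif$, which is already the identity on $\CrComp\cup\partial\MBand$, we obtain an isotopy through $\KerSAct$ ending at a diffeomorphism which is the identity on a neighborhood of the whole of $\regU{\CrComp}\cup\partial\MBand$. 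Performing both stages continuously in $\dif$ yields the required deformation retraction, and hence $\kappa$ is a homotopy equivalence; the isomorphism~\eqref{equ:pi0SfUK_pi0SfY} on $\pi_0$ follows immediately.

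The principal technical obstacle is the first stage: producing, continuously in $\dif$, an $\func$-preserving lift of the contraction $R$ across the critical vertices of $\CrComp$. At each critical vertex, axiom~\ref{axiom:Hm} equips the surface with a finite cyclic group of local symmetries permuting the incident separatrix rays; the extension must be equivariant with respect to this rotation, compatible with the arbitrary choice of $R$ on each incident edge, and match the identity in the interior of the surrounding $2$-disks $\YYi{i}$. This is precisely the point where the detailed structure theory for $\FSp{\Mman}$ developed in~\cite{MaksymenkoKuznietsova:PIGC:2019} intervenes; the rest of the argument reduces to routine fibration and collar constructions encapsulated by Lemmas~\ref{lm:Snb_S} and~\ref{lm:h_Sidf__SidfX}.
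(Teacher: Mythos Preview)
The paper does not prove this lemma here; it simply cites it as~\cite[Lemma~8.2]{MaksymenkoKuznietsova:PIGC:2019}. So there is no in-paper argument to compare against, and your outline is already more detailed than what the present paper offers. The two-stage deformation retraction you describe is in the right spirit and is essentially the shape of the argument in the cited reference.

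That said, your stage~2 has a genuine gap. After stage~1 you have arranged $\dif|_{\CrComp}=\id_{\CrComp}$, but neither of the lemmas you invoke bridges the distance from ``fixed on $\CrComp$'' to ``fixed on a neighborhood of $\regU{\CrComp}$'':
\begin{itemize}
\item Lemma~\ref{lm:h_Sidf__SidfX} applies only when $\Xman$ is a collection of boundary components of $\Mman$ and when $\dif\in\StabilizerId{\func}$; neither hypothesis holds for a general $\dif\in\KerSAct$ and your choice $\Xman=\partial\regU{\CrComp}\cup\partial\MBand$.
\item Lemma~\ref{lm:Snb_S} with $\Xman=\partial\regU{\CrComp}\cup\partial\MBand$ compares groups of diffeomorphisms \emph{fixed on $\Xman$} (pointwise, or on a neighborhood of $\Xman$). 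After stage~1 your $\dif$ is fixed on $\CrComp$, not on $\partial\regU{\CrComp}$, so it does not lie in any of the groups that lemma relates. You also cannot take $\Xman=\CrComp$ directly in Lemma~\ref{lm:Snb_S}, since $\CrComp$ is a $1$-complex rather than a submanifold with boundary made of regular contours (see the Remarks to that lemma).
\end{itemize}
What is still missing is a separate, explicit ``collar straightening'' step: once $\dif|_{\CrComp}=\id_{\CrComp}$, you must push $\dif$ to the identity on all of $\regU{\CrComp}$ by working on the half-open annular pieces of $\regU{\CrComp}\setminus\CrComp$ (foliated by regular level curves of $\func$), continuously in $\dif$ and compatibly with stage~1 near the vertices. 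This is the same level of difficulty as the vertex-extension you already flag in stage~1, and it is exactly the content handled in the cited paper; Lemmas~\ref{lm:Snb_S} and~\ref{lm:h_Sidf__SidfX} by themselves do not supply it.
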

\begin{corollary}\label{cor:pi0Ker}
We have the following commutative diagram in which the arrows $\alpha_0$ are isomorphisms:
\[
\xymatrix{
    \ \pi_0\StabilizerNbh{\func,\regU{\CrComp}\cup\YYi{0}}       \ \ar@{^(->}[r] \ar[d]^-{\alpha_0}_-{\cong} &
    \ \pi_0\StabilizerNbh{\func,\regU{\Kman}\cup\partial\MBand}  \ \ar@{=}[r]^-{\kappa_0}_-{\cong} \ar[d]^-{\alpha_0}_-{\cong} &
    \ \pi_0\KerSAct                                              \ \ar@{^(->}[d] \ar@{=}[ld] \\
    \ \myprod_{i=1}^{n}\PST{\YYi{i}} \times 0               \   \ar@{^(->}[r] \ar@/_15pt/[rr]_-{\lambda} &
    \ \bigl(\myprod_{i=1}^{n}\PST{\YYi{i}}\bigr) \times \bZ \   \ar@{^(->}[r] &
    \ \pi_0\STB{\MBand}
}
\]
\end{corollary}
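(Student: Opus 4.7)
The plan is to construct both $\alpha_0$ maps by restriction to the pieces of the \spd{\func}\ and then verify commutativity of the two squares.

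First I would observe that any $\dif \in \StabilizerNbh{\func,\regU{\CrComp}\cup\YYi{0}}$ is fixed on a neighborhood of the complement of $\bigsqcup_{i=1}^n \Int\YYi{i}$, so its support lies in this open set. Hence the restriction
\[
\dif \ \longmapsto \ (\dif|_{\YYi{1}},\ldots,\dif|_{\YYi{n}})
\]
is an isomorphism of topological groups onto $\prod_{i=1}^n \StabilizerNbh{\func|_{\YYi{i}},\partial\YYi{i}}$, with inverse obtained by gluing with the identity outside. Applying Lemma~\ref{lm:Snb_S} to each factor gives, on $\pi_0$, the identification with $\prod_{i=1}^n \PST{\YYi{i}}$; since the factor for $\YYi{0}$ is absent, the image lies in $\prod_{i=1}^n \PST{\YYi{i}} \times 0$. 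This defines the left $\alpha_0$.

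For the middle $\alpha_0$, the same argument with support now in $\bigsqcup_{i=0}^n \Int\YYi{i}$ produces one additional factor $\pi_0\StabilizerNbh{\func|_{\YYi{0}}, \partial\YYi{0}}$. By our reduction $\YYi{0}$ is a cylinder on which $\func$ has no critical points, so the level sets foliate $\YYi{0}$ by parallel circles and a standard computation (realize this stabilizer as the space of paths $[0,1]\to \DiffPl(\Circle) \simeq \Circle$ starting and ending at $\id$, fixed near the endpoints) shows that this $\pi_0$ is infinite cyclic, generated by the Dehn twist $\tau$ along $\partial\MBand$ supported in $\YYi{0}$. This yields the $\bZ$ factor.

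Commutativity of the left square is then immediate: the inclusion just forgets that the $\YYi{0}$-component is trivial, adding the summand $0 \subset \bZ$. For the right square, Lemma~\ref{lm:pi0Ker} already asserts that $\kappa_0$ is an isomorphism, so composing the middle $\alpha_0^{-1}$ with $\kappa_0$ produces the diagonal equality $\pi_0\KerSAct \cong (\prod_i \PST{\YYi{i}}) \times \bZ$ drawn in the diagram, and $\lambda$ is defined so that the right square commutes tautologically. The only subtle step is the identification $\pi_0\StabilizerNbh{\func|_{\YYi{0}}, \partial\YYi{0}} \cong \bZ$ together with the fact that its generator corresponds, under inclusion into $\pi_0\STB{\MBand}$, to the class of the Dehn twist $\tau$; this is consistent with the construction of the epimorphism $\eta$ satisfying $\eta(\tau) = \sgOrd$.
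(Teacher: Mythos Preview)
Your proof is correct and follows essentially the same route as the paper: restriction to the pieces $\YYi{i}$ gives isomorphisms of topological groups onto the products $\prod_i \StabilizerNbh{\func|_{\YYi{i}},\partial\YYi{i}}$, Lemma~\ref{lm:Snb_S} passes to $\pi_0$, and the cylinder factor contributes $\bZ$ via the Dehn twist. The only differences are cosmetic: the paper cites \cite[Theorem~5.5(1b)]{Maksymenko:TA:2020} for $\PST{\YYi{0}}\cong\bZ$ where you sketch it directly, and the paper spells out why the right vertical arrow is injective (namely, $\KerSAct$ contains $\StabilizerId{\func,\partial\MBand}$, so they share the same identity component), which you leave implicit.
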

\begin{proof}
Evidently, $\KerSAct$ contains the identity path component $\StabilizerId{\func,\partial\MBand}$ of $\STB{\MBand}$ being therefore the identity path component of $\KerSAct$.
Hence, the inclusion $\KerSAct \subset \STB{\MBand}$ induces a monomorphism
\[
\pi_0\KerSAct = \frac{\KerSAct}{\StabilizerId{\func,\partial\MBand}}
    \monoArrow
    \frac{\STB{\MBand}}{\StabilizerId{\func,\partial\MBand}}
    =
    \PSTB{\MBand}
\]
represented by the right vertical arrow of the diagram.

Further notice, that we have a natural restriction homomorphism:
\[
\alpha:\StabilizerNbh{\func,\regU{\Kman}\cup\partial\MBand} \to \myprod_{i=0}^{n}\STN{\YYi{i}},
\qquad
\alpha(\dif) =
\bigl( \dif|_{\YYi{0}}, \ldots, \dif|_{\YYi{n}} \bigr),
\]
which is evidently an \term{isomorphism} of topological groups.
Hence, due to Lemma~\ref{lm:Snb_S}, we get the following isomorphisms:
\[
    \alpha_0: \pi_0\StabilizerNbh{\func,\regU{\Kman}\cup\partial\MBand}  \cong  \myprod_{i=0}^{n}\PSTN{\YYi{i}} \cong  \myprod_{i=0}^{n}\PST{\YYi{i}}.
\]
Since $\YYi{0}$ contains no critical points of $\func$, we have that $\PST{\YYi{0}} \cong \bZ$ by~\cite[Theorem~5.5(1b)]{Maksymenko:TA:2020}, and this group is generated by an isotopy class of a Dehn twist $\tau\in\ST{\YYi{0}}$ fixed near $\partial\YYi{0}$.
This gives the isomorphism $\alpha_0$.

Finally, note that $\alpha\bigl(\StabilizerNbh{\func,\regU{\Kman}\cup\YYi{0}}\bigr) = \myprod_{i=1}^{n}\STN{\YYi{i}}$.
This implies the left vertical arrow is an isomorphism, while the left upper horizontal arrow is a monomorphism.
\end{proof}

\begin{remark}\rm
For simplicity, we will further identify each $\PFST{\YYi{i}} \subset \PST{\YYi{i}}$ with some subgroups of $\PSTB{\MBand}$ via a monomorphism $\lambda$.
For that reason let us explicitly describe this identification.
Let $\dif\in\ST{\YYi{i}}$ be a representative of some element $[\dif]\in\PST{\YYi{i}}$.
Then $\lambda([\dif])$ is obtained as follows: make $\dif$ fixed near $\partial\YYi{i}$ by any $\func$-preserving isotopy fixed on $\partial\YYi{i}$, extend it further by the identity on all of $\MBand$, and finally take the isotopy class of that extension in $\PSTB{\MBand}$.
\end{remark}

Since $\YYi{0}$ contains no critical points of $\func$, every $\dif\in\ST{\YYi{i}}$ leaves invariant every contour of $\func$ in $\YYi{0}$, whence
\begin{equation}\label{equ:lambda_Deltai__L_cap_Delta}
    \PFST{\YYi{i}} \ = \ \PFSTB{\MBand} \, \cap \, \PST{\YYi{i}}.
\end{equation}

In Section~\ref{sect:proof:th:g_eta} we will prove the following Theorem~\ref{th:g_eta} being a M\"obius band counterpart of the construction described in~\cite[Section~12]{Maksymenko:TA:2020} for $2$-disks and cylinders.
It will allow to describe an algebraic structure of $\PSTB{\MBand}$ using Lemmas~\ref{lm:3x3_G_m:char} and~\ref{lm:3x3_G_H_gamma_m:char}, see Section~\ref{sect:decuct:th:class:Mobius_band}.
\begin{theorem}\label{th:g_eta}
Suppose the cylinder $\YYi{0}$ contains no critical points of $\func$.
Then
\begin{enumerate}[label={\rm(\alph*)}]
\item\label{enum:th:g_eta:ker_eta}
$\ker(\eta) = \myprod_{i=1}^{n}\PST{\YYi{i}}$;

\item\label{enum:th:g_eta:ega_g_1}
there exists $\gdif\in\PSTB{\MBand}$ such that $\eta(\gdif)=1$, $\gdif^{\sgOrd}\in \pi_0\FolStabilizer{\func,\partial\MBand}$, and $\gel^{\sgOrd}$ also commutes with $\ker(\eta)$.
\end{enumerate}
\end{theorem}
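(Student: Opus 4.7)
The strategy is to combine the cellular description of $\KerSAct$ from Lemma~\ref{lm:charKerSAct} with the product decomposition of Corollary~\ref{cor:pi0Ker} in order to pin down $\ker(\eta)$, and then to realise a suitable generator of $\STB{\MBand}/\KerSAct\cong\bZ_{\sgOrd}$ by a smooth $\func$-preserving cyclic symmetry of $\MBand$ whose $\sgOrd$-th iterate is, up to $\func$-preserving isotopy, the Dehn twist $\tau$ supported in $\YYi{0}$.

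For statement~\ref{enum:th:g_eta:ker_eta}, the inclusion $\prod_{i=1}^{n}\PST{\YYi{i}}\subseteq \ker(\eta)$ is immediate from the definition of $\eta$: any diffeomorphism supported in $\Int\YYi{i}$ with $i\geq 1$ is the identity on the cylinder $\QQi{0}$, lifts to the identity on $\tQman_{0}$, and therefore produces zero shift of the intervals $J_{k}$. For the reverse inclusion, Lemma~\ref{lm:charKerSAct}\ref{enum:lm:charKerSAct:eta_h__b} identifies $\eta^{-1}(\sgOrd\bZ)$ with $\pi_{0}\KerSAct$, so in particular $\ker(\eta)\subseteq\pi_{0}\KerSAct$. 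By Corollary~\ref{cor:pi0Ker}, the latter group is the internal direct product $\langle\tau\rangle\times\prod_{i=1}^{n}\PST{\YYi{i}}$; since $\eta(\tau)=\sgOrd$ while $\eta$ vanishes on each $\PST{\YYi{i}}$ with $i\geq1$ by the previous observation, the kernel of $\eta|_{\pi_{0}\KerSAct}$ coincides with $\prod_{i=1}^{n}\PST{\YYi{i}}$, which completes~\ref{enum:th:g_eta:ker_eta}.

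For statement~\ref{enum:th:g_eta:ega_g_1}, first pick a diffeomorphism $\gdif_{0}\in\STB{\MBand}$ whose class generates $\STB{\MBand}/\KerSAct\cong\bZ_{\sgOrd}$ and, after replacing $\gdif_{0}$ by a suitable power, satisfies $\eta(\gdif_{0})\equiv 1\pmod{\sgOrd}$; composing with an appropriate power of $\tau$ we may then arrange $\eta(\gdif_{0})=1$. The heart of the proof is to refine $\gdif_{0}$ to a $\gdif$ whose $\sgOrd$-th power actually lies in $\FolStabilizer{\func,\partial\MBand}$. This will be achieved geometrically by assembling $\gdif$ from three local pieces, one for each part of the special decomposition of $\MBand$: on the critical-contour neighbourhood $\regU{\CrComp}$, extend the $\sgOrd$-order cellular permutation of the edges of $\CrComp$ (produced in the spirit of Lemma~\ref{lm:b_is_odd}) to a smooth $\func$-preserving diffeomorphism of $\regU{\CrComp}$ whose $\sgOrd$-th iterate is the identity on $\CrComp$ and can be straightened to the identity on all of $\regU{\CrComp}$ by an $\func$-preserving isotopy; on each orbit of disks $\YYi{i}$ with $i\geq 1$ choose one representative and transport it cyclically by $\gdif$ using the cone-change construction of Section~\ref{sect:quasi-cones}, so that the $\sgOrd$-th iterate on that orbit is the identity; and on the cylinder $\YYi{0}$ take the rotation by $a$ edges of $\QQi{0}\cap\CrComp$, whose $\sgOrd$-th iterate is exactly $\tau$. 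Gluing along the common collars yields $\gdif\in\STB{\MBand}$ with $\eta(\gdif)=1$ and $\gdif^{\sgOrd}=\tau$ up to $\func$-preserving isotopy rel $\partial\MBand$, so $\gdif^{\sgOrd}\in\pi_{0}\FolStabilizer{\func,\partial\MBand}$.

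Finally, since $\gdif^{\sgOrd}$ is represented by $\tau$, which is supported in $\Int\YYi{0}$, while each element of $\ker(\eta)=\prod_{i=1}^{n}\PST{\YYi{i}}$ admits a representative supported in $\bigsqcup_{i=1}^{n}\Int\YYi{i}$, disjoint from $\supp\tau$, the two commute already at the level of diffeomorphisms, and hence in $\PSTB{\MBand}$. The principal difficulty lies in the geometric construction of $\gdif$ itself: one must treat both orbit types~\ref{dt1} and~\ref{dt2} from Lemma~\ref{lm:action_g_on_hY}\ref{enum:lm:action_g_on_hY:disk_types}, since in case~\ref{dt2} a representative disk returns to itself with reversed orientation after $\mel=\sgOrd/2$ iterations, so the cone-change recipe must incorporate an orientation-reversing involution of a chosen disk in each such orbit whose square is isotopic to the identity through $\func$-preserving diffeomorphisms; in addition, the three local pieces must be glued smoothly along the collars of $\partial\regU{\CrComp}$, and the $\sgOrd$-th iterates matched so that the straightening isotopies on $\regU{\CrComp}$ and on the disks $\YYi{i}$ ($i\geq 1$) are compatible with the rigid rotation on $\YYi{0}$.
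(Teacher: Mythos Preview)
Your treatment of~\ref{enum:th:g_eta:ker_eta} is correct and is essentially identical to the paper's argument.

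For~\ref{enum:th:g_eta:ega_g_1} the overall target is right --- you want $\gdif$ so that $\gdif^{\sgOrd}$ is, up to $\func$-preserving isotopy rel $\partial\MBand$, the Dehn twist $\tau$ --- but the proposed construction has a genuine gap. The cone-change construction of Section~\ref{sect:quasi-cones} (and its use in Lemma~\ref{lm:b_is_odd}) produces only \emph{homeomorphisms}, and it neither respects smooth structure nor preserves $\func$; likewise your step ``extend the $\sgOrd$-order cellular permutation of the edges of $\CrComp$ to a smooth $\func$-preserving diffeomorphism of $\regU{\CrComp}$'' is precisely the hard part and is left as a black box. There is no evident way to build a periodic element of $\STB{\MBand}$ from scratch along these lines.

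The paper sidesteps this entirely by \emph{correcting} an already-existing element rather than constructing one ab initio. It takes an arbitrary $\qdif\in\STB{\MBand}$ with $\eta(\qdif)=1$ (which exists since $\eta$ is surjective) and leaves $\qdif$ untouched on $\regU{\CrComp}\cup\YYi{0}$ and on all disks except the ``last column'' $\DDLast$, $\EELast$ of each orbit type. On $\DDLast$ it replaces $\qdif$ by $\dif^{-1}\circ\qdif$, where $\dif$ is supported in $\cup_{i\geq1}\Int\YYi{i}$ and isotopic in $\Stabilizer{\func}$ (after releasing $\partial\MBand$) to $\qdif^{\sgOrd}$; this uses Lemma~\ref{lm:pi0Ker}. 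On $\EELast$ it replaces $\qdif$ by $\kdif^{-1}\circ\qdif^{1-\mel}$, where $\kdif:\EELast\to\EELast$ agrees with $\qdif^{\mel}$ near $\partial\EELast$ and satisfies $\kdif^{2}\in\StabilizerId{\func|_{\EELast}}$. The existence of such a $\kdif$ --- exactly the orientation-reversing ``involution up to $\func$-isotopy'' you correctly flag as the principal difficulty --- is not elementary and is imported from the companion paper~\cite[Theorem~3.5]{KuznietsovaMaksymenko:PIGC:2020}. With $\gdif$ so defined one then verifies directly that $\gdif^{\sgOrd}|_{\YYi{i}}\in\StabilizerId{\func|_{\YYi{i}}}$ for every $i\geq1$, after which the commutation with $\ker(\eta)$ follows by your disjoint-support argument.
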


\begin{corollary}\label{cor:stab_3x3_diagram}
We have the following exact $(3\times3)$-diagram in which all rows and columns are exact, the upper row is a product of Bieberbach sequences for $(\func|_{\YYi{i}},\partial\YYi{i})$, $i=1,\ldots,k$, while the middle row is the Bieberbach sequence of $(\func,\partial\MBand)$:
\begin{equation}\label{equ:diagram:eta_pi0SfdB}
    \begin{gathered}
    \xymatrix@C=1.5em@R=1.7em{
        \ \myprod_{i=1}^{n}\PFST{\YYi{i}}  \ \ar@{^(->}[r] \ar@{^(->}[d]^-{\lambda} &
        \ \myprod_{i=1}^{n}\PST{\YYi{i}}   \ \ar@{->>}[r]  \ar@{^(->}[d]^-{\lambda} &
        \ \myprod_{i=1}^{n}\GST{\YYi{i}}   \ \ar@{^(->}[d]    \\
        \ \PFSTB{\MBand}                   \ \ar@{^(->}[r] \ar@{->>}[d]  &
        \ \PSTB{\MBand}                    \ \ar@{->>}[d]^-{\eta}\ar@{->>}[r] &
        \ \GSTB{\MBand}                    \ \ar@{->>}[d] \\
        \ \sgOrd\bZ                        \ \ar@{^(->}[r]   &
        \ \bZ                              \ \ar@{->>}[r] &
        \ \bZ_{\sgOrd}                     \
    }
    \end{gathered}
\end{equation}
\end{corollary}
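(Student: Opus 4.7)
The plan is to apply Lemma~\ref{lm:3x3-diagram} to $\Bgrp := \PSTB{\MBand}$ together with the two normal subgroups $\Agrp := \PFSTB{\MBand}$ and $\Lgrp := \ker(\eta)$, and then to identify all nine entries of the resulting $(3\times3)$-diagram with the groups appearing in~\eqref{equ:diagram:eta_pi0SfdB}. By definition of $\Agrp$, the middle row is the Bieberbach sequence $\PFSTB{\MBand} \monoArrow \PSTB{\MBand} \epiArrow \GSTB{\MBand}$, while the middle column is $\ker(\eta) \monoArrow \PSTB{\MBand} \epiArrow \bZ$, using the identification $\ker(\eta) = \prod_{i=1}^{n}\PST{\YYi{i}}$ provided by Theorem~\ref{th:g_eta}(a).

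For the upper row, note that~\eqref{equ:lambda_Deltai__L_cap_Delta} gives $\PFST{\YYi{i}} = \PFSTB{\MBand} \cap \PST{\YYi{i}}$ inside $\PSTB{\MBand}$; since $\Lgrp$ is the internal direct product of the subgroups $\PST{\YYi{i}}$, intersecting with $\Agrp$ yields $\Agrp \cap \Lgrp = \prod_{i=1}^{n}\PFST{\YYi{i}}$ and hence $\Lgrp/(\Agrp \cap \Lgrp) \cong \prod_{i=1}^{n}\GST{\YYi{i}}$. Thus the upper row is precisely the product of the Bieberbach sequences for $(\func|_{\YYi{i}},\partial\YYi{i})$, $i=1,\ldots,n$.

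It then remains to verify $\eta(\Agrp) = \sgOrd\bZ$; granted this, the bottom-left entry becomes $\Agrp/(\Agrp\cap\Lgrp) \cong \eta(\Agrp) = \sgOrd\bZ$ and the bottom-right entry becomes $\Bgrp/(\Agrp\Lgrp) \cong \bZ/\sgOrd\bZ = \bZ_{\sgOrd}$. The inclusion $\sgOrd\bZ \subseteq \eta(\PFSTB{\MBand})$ is immediate from Theorem~\ref{th:g_eta}(b), as $\gdif^{\sgOrd} \in \PFSTB{\MBand}$ with $\eta(\gdif^{\sgOrd}) = \sgOrd$. For the opposite inclusion I will show $\PFSTB{\MBand} \subseteq \pi_0\KerSAct$, after which Lemma~\ref{lm:charKerSAct} identifies $\pi_0\KerSAct$ with $\eta^{-1}(\sgOrd\bZ)$ and the conclusion follows: a foliation-preserving diffeomorphism $\dif$ isotopic to $\id$ must satisfy $\dif(\partial\YYi{i}) = \partial\YYi{i}$ since $\partial\YYi{i}$ is a regular contour, whence $\dif(\YYi{i}) = \YYi{i}$ because $\partial\YYi{i}$ bounds a unique $2$-disk in $\MBand$; orientation preservation on $\YYi{i}$ is then obtained by lifting $\dif$ to the orientation double cover of $\MBand$, where the lift isotopic to the identity is globally orientation-preserving.

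The main obstacle I anticipate is precisely the orientation-preservation step: while intuitively clear, it has to handle the non-orientability of $\MBand$ with some care, and one must choose between an argument via the orientation double cover and a more bookkeeping-heavy cellular argument based on Lemma~\ref{lm:charKerSAct} and the Lefschetz identity~\eqref{equ:Lefschetz_number_of_h} developed in Section~\ref{sect:decomp_B_in_Yi}. Once this inclusion is settled, all the remaining identifications are formal consequences of Lemma~\ref{lm:3x3-diagram} applied to the triple $(\Bgrp,\Agrp,\Lgrp)$, producing exactly the diagram~\eqref{equ:diagram:eta_pi0SfdB}.
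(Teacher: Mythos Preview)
Your proposal is correct and follows essentially the same route as the paper: apply Lemma~\ref{lm:3x3-diagram} to $\Bgrp=\PSTB{\MBand}$ with the two normal subgroups $\Agrp=\PFSTB{\MBand}$ and $\Lgrp=\ker(\eta)$, then identify the nine entries using Theorem~\ref{th:g_eta} and~\eqref{equ:lambda_Deltai__L_cap_Delta}. The paper's proof simply asserts the inclusion $\PFSTB{\MBand}\subset\pi_0\KerSAct$ as evident, whereas you spell out an argument via the orientation double cover; note however that your double-cover argument is not quite complete as stated, since the orientation-preserving lift $\tilde{\dif}$ being isotopic to $\id_A$ does not by itself prevent it from exchanging the two sheets $\tilde{\YYi{i}}^{\pm}$ over a given disk, and it is precisely such a swap that would make $\dif|_{\YYi{i}}$ orientation-reversing --- you still need the foliation-preserving hypothesis (or equivalently one of the cellular criteria in Lemma~\ref{lm:charKerSAct}) to rule that out.
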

\begin{proof}
For simplicity, denote $\Bgrp = \PSTB{\MBand}$, $\Lgrp = \myprod_{i=1}^{n}\pi_0\Stabilizer{\func|_{\YYi{i}},\partial\YYi{i}}$, $\Agrp = \pi_0\FolStabilizer{\func,\partial\MBand}$, and $\Kgrp:=\Agrp\cap\Lgrp$.
Then by~\eqref{equ:lambda_Deltai__L_cap_Delta}, $\Kgrp = \myprod_{i=1}^{n}\pi_0\FolStabilizer{\func|_{\YYi{i}},\partial\YYi{i}}$.
Hence, by Lemma~\ref{lm:3x3-diagram} we get an exact $(3\times3)$-diagram~\eqref{equ:3x3-diagram}.
Moreover, by definition, $\myprod_{i=1}^{n}\GST{\YYi{i}} = \Lgrp/\Kgrp$ and $\GSTB{\MBand} = \Bgrp / \Agrp$, so corresponding right upper and middle terms of~\eqref{equ:diagram:eta_pi0SfdB} and~\eqref{equ:3x3-diagram} agree.

The identification of bottom rows follows from Theorem~\ref{th:g_eta}.
Indeed, since $\eta$ is an epimorphism and $\Lgrp=\ker(\eta)$, we see that $\Bgrp/\Lgrp=\bZ$.

Moreover, since $\Agrp = \PFSTB{\MBand} \subset \pi_0\KerSAct=\eta^{-1}(\sgOrd)$, we have that $\eta(\Agrp) \subset \sgOrd\bZ$.
Conversely, since $\gdif^{\sgOrd}\in\Agrp$ and $\eta(\sgOrd) = 1$, $\sgOrd = \eta(\gdif^{\sgOrd}) \in \eta(\Agrp)$, and thus $\sgOrd\bZ\subset\eta(\Agrp)$.
\end{proof}

\section{Deduction of Theorem~\ref{th:class:Mobius_band} from Theorem~\ref{th:g_eta}}\label{sect:decuct:th:class:Mobius_band}
Recall that if $\Xman$ is either a M\"obius band or a $2$-disk, and $\func\in\FSpR{\Xman}$, then we have the following isomorphisms:
\begin{align*}
    &\pi_0\FolStabilizerIsotId{\func,\partial\Xman}
        \stackrel{\eqref{equ:STf_STprf_disk_cyl_mb}}{\cong}
    \PFSTB{\Xman}, &
    &\pi_1\OrbitComp{\func}{\func}
        \stackrel{\eqref{equ:j_piOf_pi0Stprf}}{\cong}
    \pi_0\StabilizerIsotId{\func,\partial\Xman}
        \stackrel{\eqref{equ:STf_STprf_disk_cyl_mb}}{\cong}
    \PSTB{\Xman}.
\end{align*}
Therefore, in what follows it will be more convenient to use groups $\PSTB{\Xman}$.

Now let $\func\in\FSpR{\MBand}$.
Keeping notations from the previous Section, denote by $\cntOne$ and $\cntTwo$ the numbers of orbits of the \term{non-free} $\bZ_{\sgOrd}$-action on the set $\CompSet$ of disks of types~\ref{dt1} and~\ref{dt2} respectively.
Let also $\sg{\cntOne}$ and $\sg{\cntTwo}$ be the numbers of orbits of types~\ref{dt1} and~\ref{dt2} of the \term{free} $\bZ_{\sgOrd}$-action on the set $\PlCompSet = \CompSet\times\{\pm1\}$.

Since the action of $\bZ_{\sgOrd}$ on $\PlCompSet$ is free, we have that $2n = \sgOrd\,(\sg{\cntOne} + \sg{\cntTwo})$.
Moreover, due to Lemma~\ref{lm:action_g_on_hY},  $\YYi{i}$ is a disk of type~\ref{dt1} (resp.\ of type~\ref{dt2}) iff $(\YYi{i},1)$ and $(\YYi{i},-1)$ belong to distinct orbits (resp.\ the same orbit).
This implies that $\sg{\cntOne} = 2\cntOne$, and $\sg{\cntTwo} = \cntTwo$.
Hence,
\begin{equation}\label{equ:n_d_e2_b}
n = \sgOrd\,(\cntOne + \cntTwo/2).
\end{equation}
Note that this number is always integer, since $\sgOrd$ is even whenever $\cntTwo>0$.
Consider the following three cases.

\subsection*{Case (A)}
Suppose all disks are of type~\ref{dt1}, i.e.\ $\cntTwo=0$.
Then $\sgOrd$ is odd due to Lemma~\ref{lm:b_is_odd}.
\begin{lemma}\label{lm:all_disks_T1}
In the case {\rm(A)} the action of $\bZ_{\sgOrd}$ on disks $\CompSet$ is free, so $n = \sgOrd\cntOne$, and for some $\Ggrp\in\classGroups$ we have an isomorphism:
\[
    \pi_1\OrbitComp{\func}{\func} \ \cong \ \PSTB{\MBand} \ \cong \ \WrGZ{\Ggrp}{\sgOrd}.
\]
\end{lemma}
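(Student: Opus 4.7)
The plan is to apply Lemma~\ref{lm:3x3_G_m:char}, which characterizes wreath products of the form $\WrGZ{\Ggrp}{\sgOrd}$ using exactly the ingredients furnished by Theorem~\ref{th:g_eta}. Concretely, I would take $\Bgrp:=\PSTB{\MBand}$, the epimorphism $\eta$, and the element $\gdif\in\PSTB{\MBand}$ from Theorem~\ref{th:g_eta}, and identify the kernel $\Lgrp:=\ker(\eta)=\myprod_{i=1}^n\PST{\YYi{i}}$ via Theorem~\ref{th:g_eta}\ref{enum:th:g_eta:ker_eta}.

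The first step is to observe that all disks being of type~\ref{dt1} means, by Lemma~\ref{lm:action_g_on_hY}\ref{enum:lm:action_g_on_hY:disk_types}, that for each $i$ the $\sgOrd$ disks $\gdif^k(\YYi{i})$, $k=0,\ldots,\sgOrd-1$, are mutually distinct. In other words, the $\bZ_{\sgOrd}$-action on $\CompSet$ is free, which immediately yields the numeric identity $n=\sgOrd\cntOne$. Pick one representative $\YYi{i_1},\ldots,\YYi{i_{\cntOne}}$ from each orbit and set $\Ggrp:=\myprod_{j=1}^{\cntOne}\PST{\YYi{i_j}}\subset\Lgrp$.

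The second step is the identification $\gdif^k\Ggrp\gdif^{-k}=\myprod_{j=1}^{\cntOne}\PST{\gdif^k(\YYi{i_j})}$, which follows from the fact that conjugation by $\gdif$ carries a diffeomorphism supported in $\YYi{i}$ (and identity elsewhere) to one supported in $\gdif(\YYi{i})$. Freeness of the $\bZ_{\sgOrd}$-action on $\CompSet$ then guarantees that $\{\gdif^k(\YYi{i_j})\}_{0\le k<\sgOrd,\,1\le j\le\cntOne}$ enumerates $\YYi{1},\ldots,\YYi{n}$ bijectively, so $\Lgrp$ splits as the direct product of the subgroups $\Ggrp_k:=\gdif^k\Ggrp\gdif^{-k}$, $k=0,\ldots,\sgOrd-1$. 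Together with $\eta(\gdif)=1$ and the property that $\gdif^{\sgOrd}\in\PFSTB{\MBand}$ commutes with $\Lgrp$ (Theorem~\ref{th:g_eta}\ref{enum:th:g_eta:ega_g_1}), all hypotheses of Lemma~\ref{lm:3x3_G_m:char} are satisfied, producing the desired isomorphism $\PSTB{\MBand}\cong\WrGZ{\Ggrp}{\sgOrd}$.

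Finally, each factor $\PST{\YYi{i_j}}\cong\pi_1\Orbit{\func|_{\YYi{i_j}}}$ belongs to $\classGroups$ by Theorem~\ref{th:class:oriented_case}\ref{enum:th:class:oriented_case:not_torus} applied to the orientable 2-disk $\YYi{i_j}$, and since $\classGroups$ is closed under direct products (property~\ref{enum:def:classG:x} of Definition~\ref{def:classG}), $\Ggrp\in\classGroups$ as well. I do not anticipate significant obstacles: Theorem~\ref{th:g_eta} and Lemma~\ref{lm:3x3_G_m:char} already package the analytic and algebraic content, and what remains is the bookkeeping check that the conjugates $\gdif^k\Ggrp\gdif^{-k}$ are pairwise disjoint and exhaust $\Lgrp$\,---\,which is precisely where the type~\ref{dt1} hypothesis (freeness of the $\bZ_{\sgOrd}$-action on $\CompSet$) is used.
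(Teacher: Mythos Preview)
Your proposal is correct and follows essentially the same strategy as the paper: set $\Bgrp=\PSTB{\MBand}$, use Theorem~\ref{th:g_eta} to identify $\ker(\eta)=\prod_i\PST{\YYi{i}}$ and obtain the element $\gdif$, choose $\Ggrp$ as the product over one orbit representative from each $\bZ_{\sgOrd}$-orbit, verify freeness of the action on $\CompSet$ from the type~\ref{dt1} hypothesis, and then invoke Lemma~\ref{lm:3x3_G_m:char}. The paper additionally verifies hypotheses~(c) and~(d) of Lemma~\ref{lm:3x3_G_m:char} with $\Agrp=\PFSTB{\MBand}$ so as to obtain the full isomorphism of exact $(3\times3)$-diagrams (as used in Corollary~\ref{cor:stab_3x3_diagram}), but for the isomorphism $\PSTB{\MBand}\cong\WrGZ{\Ggrp}{\sgOrd}$ stated in the lemma your use of hypotheses~(a) and~(b) alone suffices.
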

\begin{proof}
The assumption that all disks are of type~\ref{dt1} evidently means that if $\dif(\YYi{i})=\YYi{i}$ for some $i=1,\ldots,n$ and $\dif\in\STB{\MBand}$, then $\dif$ also preserves orientation of $\YYi{i}$.
But since the action of $\bZ_{\sgOrd}$ on $\PlCompSet$ is free, it implies that $\dif$ leaves invariant all other disks $\YYi{i'}$ and preserves their orientations.
Hence, the action of $\bZ_{\sgOrd}$ on $\PlCompSet$ is free.

Therefore, one can enumerate disks in $\CompSet$ as follows:
\begin{equation}\label{equ:disks_T1}
\begin{array}{cccc}
    \DD{1}{0} & \DD{1}{1} & \cdots & \DD{1}{\sgOrd-1} \\
    \DD{2}{0} & \DD{2}{1} & \cdots & \DD{2}{\sgOrd-1} \\
    \vdots    & \vdots    & \vdots & \vdots      \\
    \DD{\cntOne}{0} & \DD{\cntOne}{1} & \cdots & \DD{\cntOne}{\sgOrd-1} \\
\end{array}
\end{equation}
so that $\gdif$ will cyclically shift columns to the \term{right}, i.e.\ $\gdif(\DD{j}{i})=\DD{j}{i+1\bmod \sgOrd}$ for all $i,j$.
An example is shown in Figure~\ref{fig:case_a:b3:n9}.
Let us point out that in that figure $\DD{j}{0}, \DD{j}{2}, \DD{j}{1}$, $j=1,2,3$, is the ``natural geometric'' ordering of those disks ``along'' $\MBand$, and it differs from their ordering in the $j$-row of~\eqref{equ:disks_T1}.

Also note that shifting to the right of columns in~\eqref{equ:disks_T1} is opposite to the behavior of the automorphism $\ahom$ defining the groups $\WrGZ{\Ggrp}{\sgOrd}$ from Section~\ref{sect:grp:GwrmZ}.
We are going to apply Lemma~\ref{lm:3x3_G_m:char} and therefore will now introduce notations agreeing with that lemma.

\begin{figure}[htbp]
\includegraphics[width=0.9\textwidth]{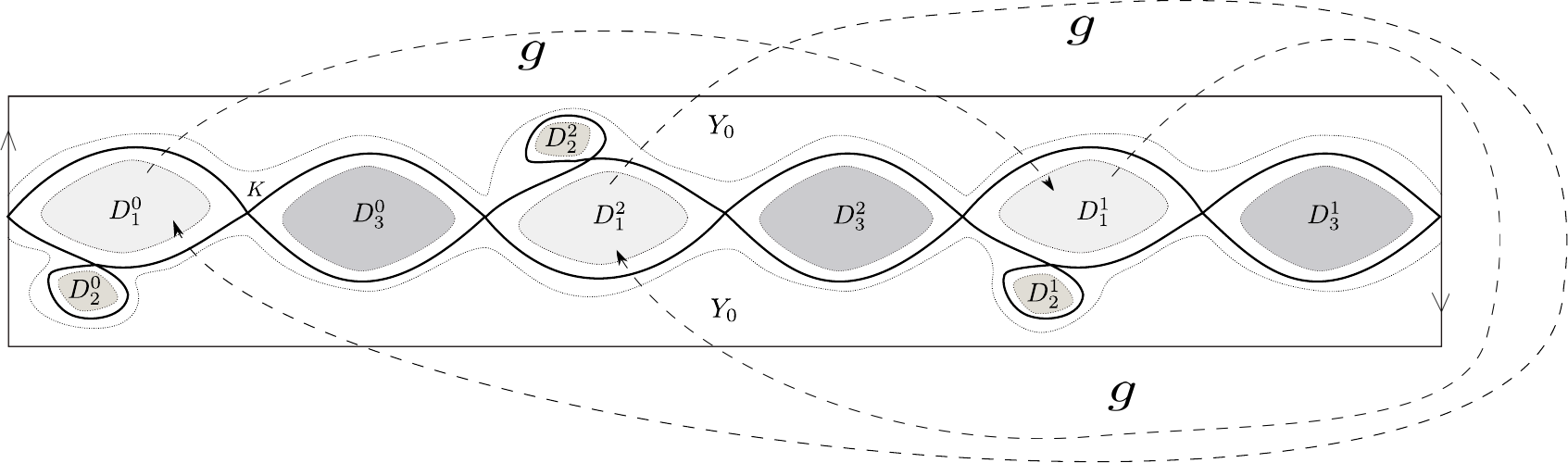}
\mycaption{Case (A): $\cntOne=3$, $\cntTwo=0$, $\sgOrd=3$, $n=\sgOrd\cntOne=9$}
{\PSTB{\MBand}      \, \cong  \, \WrGZ{\Bigl(\myprod_{j=1}^{3}\PST{\DD{j}{0}}\Bigr)}{3}}
{\pi_1\Orbit{\func} \, \cong  \, \WrGZ{\Bigl(\myprod_{j=1}^{3} \pi_1\Orbit{\func|_{\DD{j}{0}}} \Bigr)}{3}}
\label{fig:case_a:b3:n9}
\end{figure}

For $i=0,\ldots,\sgOrd-1$ denote
\begin{align*}
    \Ggrp_i &:= \myprod_{j=1}^{\cntOne} \PST{\DD{j}{i}},  &
    \Pgrp_i &:= \myprod_{j=1}^{\cntOne} \PFST{\DD{j}{i}}.
\end{align*}
Let also $\Ggrp:=\Ggrp_0$ and $\Pgrp:=\Pgrp_0$.
As $\gdif^{i}(\DD{j}{0})=\DD{j}{i}$, we get from Lemma~\ref{lm:action_g_on_hY}\ref{enum:lm:action_g_on_hY:conj_by_g} that $\Ggrp_i:=\gdif^{i}\Ggrp\gdif^{-i}$ and $\Pgrp_i:=\gdif^{i}\Pgrp\gdif^{-i}$ as in Lemma~\ref{lm:3x3_G_m:char}.
Hence, due to diagram~\eqref{equ:diagram:eta_pi0SfdB}, the kernel of $\eta$ is a direct product of subgroups $\Ggrp_i$ conjugated to $\Ggrp$:
\[
    \Lgrp := \ker(\eta)
    \stackrel{\eqref{equ:diagram:eta_pi0SfdB}}{=} 
    \myprod_{k=1}^{n}\!\PST{\YYi{k}}
    = \myprod_{i=0}^{\sgOrd-1}\myprod_{j=1}^{\cntOne}\!\PST{\DD{j}{i}}
    = \Ggrp_0\times\cdots\times\Ggrp_{\sgOrd-1}.
\]

Moreover, denote $\Agrp :=\PFSTB{\MBand}$.
Then~\eqref{equ:lambda_Deltai__L_cap_Delta} implies that $\Pgrp_i = \Agrp \cap \Ggrp_i$.
Therefore, by diagram~\eqref{equ:diagram:eta_pi0SfdB}, $\Kgrp := \Agrp\cap\Lgrp = \Pgrp_0\times\cdots\times\Pgrp_{\sgOrd-1}$ is generated by groups $\Pgrp_i$.
All other requirements of Lemma~\ref{lm:3x3_G_m:char}:
\begin{itemize}
\item$\eta(\gdif)=1$, $\eta(\Agrp) = \sgOrd\bZ$, $\gdif^{\sgOrd}\in\Agrp$, and that $\gdif^{\sgOrd}$ commutes with $\Lgrp$,
\end{itemize}
are contained in the statement of Theorem~\ref{th:g_eta}.

Therefore, by Lemma~\ref{lm:3x3_G_m:char}, the diagram~\eqref{equ:diagram:eta_pi0SfdB} is isomorphic to the following one:
{\small%
\begin{equation*}
    \begin{gathered}
    \xymatrix@R=5ex@C=3ex{
        \Bigl(\myprod_{j=1}^{\cntOne}\!\PFST{\DD{j}{0}}\Bigr)^{\sgOrd} \!\!\!\times\! 0  \ar@{^(->}[r] \ar@{^(->}[d] &
        \Bigl(\myprod_{j=1}^{\cntOne}\!\PST{\DD{j}{0}}\Bigr)^{\sgOrd}  \!\!\!\times\! 0  \ar@{->>}[r]  \ar@{^(->}[d] &
        \Bigl(\myprod_{j=1}^{n}\GST{\DD{j}{0}}\Bigr)^{\sgOrd}        \!\!\!\times\! 0                \ar@{^(->}[d] \\
        \Bigl(\myprod_{j=1}^{\cntOne}\!\PFST{\DD{j}{0}}\Bigr)^{\sgOrd} \!\!\!\times\! \sgOrd\bZ   \ar@{^(->}[r] \ar@{->>}[d]  &
        \WrGZ{\Bigl(\myprod_{j=1}^{\cntOne}\!\PST{\DD{j}{0}}\Bigr)}{\sgOrd}             \ar@{->>}[d]^-{\eta}\ar@{->>}[r] &
        \WrGZm{\Bigl(\myprod_{j=1}^{\cntOne}\!\GST{\DD{j}{0}}\Bigr)}{\sgOrd}            \ar@{->>}[d] \\
        \sgOrd\bZ                                                               \ar@{^(->}[r]   &
        \bZ                                                                     \ar@{->>}[r] &
        \bZ_{\sgOrd}
    }
\end{gathered}
\end{equation*}}

Now, by Theorem~\ref{th:class:oriented_case}, each group $\PST{\DD{j}{0}} = \pi_1\Orbit{\func|_{\DD{j}{0}}}$ belongs to $\classGroups$, whence their product $\Ggrp =\myprod_{j=1}^{\cntOne}\PST{\DD{j}{0}} = \myprod_{j=1}^{\cntOne}\pi_1\Orbit{\func|_{\DD{j}{0}}}\in\classGroups$ as well.
It remains to note that the last diagram contains the statement that $\PSTB{\MBand} \cong \WrGZ{\Ggrp}{\sgOrd}$.
\end{proof}

\begin{remark}\rm
If $\sgOrd=1$, so $\STB{\MBand}=\KerSAct$ and thus the action is in fact trivial, we have that $\Ggrp = \myprod_{i=1}^{n}\PST{\YYi{i}}$, and by Lemma~\ref{lm:all_disks_T1},
\[
    \pi_0\KerSAct \cong \PSTB{\MBand} \ \cong \ \WrGZ{\Ggrp}{1} \cong \Ggrp\times\bZ = \myprod_{i=1}^{\cntOne}\PST{\YYi{i}} \ \times \ \bZ,
\]
which agrees with isomorphism $\kappa_0$ from Lemma~\ref{lm:pi0Ker}, see example in Figure~\ref{fig:case_a:b1:n3}.
\begin{figure}[htbp]
\begin{tabular}{ccc}
\includegraphics[height=2.5cm]{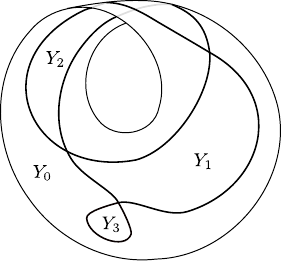} & \qquad\qquad &
\includegraphics[height=2.5cm]{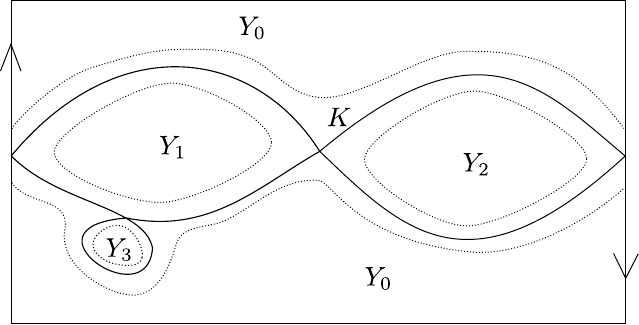}
\end{tabular}
\mycaption{Case~{\rm(A)}: $\cntOne=3$, $\cntTwo=0$, $b=1$, $n=\sgOrd\cntOne=3$}
{\PSTB{\MBand}      \, \cong \, \Bigl(\myprod_{i=1}^{3}\PST{\YYi{i}}\Bigr) \times\bZ}
{\pi_1\Orbit{\func} \, \cong \, \Bigl(\myprod_{j=1}^{3} \pi_1\Orbit{\func|_{\YYi{j}}} \Bigr) \times\bZ.}
\label{fig:case_a:b1:n3}
\end{figure}
\end{remark}

\subsection*{Case (B)}
Suppose all disks are of type~\ref{dt2}, i.e.\ $\cntOne=0$.
Then $\sgOrd=2\mel$ for some $\mel\geq1$, whence $n = \sgOrd\cntTwo/2 = \mel\cntTwo$, see~\eqref{equ:n_d_e2_b}.
\begin{lemma}\label{lm:all_disks_T2}
In the case~{\rm(B)} there exist $\Hgrp\in\classGroups$ and its automorphism $\gamma:\Hgrp\to\Hgrp$ with $\gamma^2=\id_{\Hgrp}$ such that we have an isomorphism:
\[
    \pi_1\Orbit{\func} \ \cong \ \PSTB{\MBand} \ \cong \ \WrGHZ{\UnitGroup}{\Hgrp}{\gamma}{\mel}.
\]
\end{lemma}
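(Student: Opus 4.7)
The plan is to run the same strategy as in Lemma~\ref{lm:all_disks_T1}, but applying Lemma~\ref{lm:3x3_G_H_gamma_m:char} (which characterizes groups of type $\WrGHZ{\Ggrp}{\Hgrp}{\gamma}{\mel}$) in place of Lemma~\ref{lm:3x3_G_m:char}. Because $\cntOne=0$, only the $\Hgrp$-part of that lemma will carry content, so I will set $\Ggrp:=\UnitGroup$ from the outset.

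First I would enumerate the $n=\mel\cntTwo$ disks of $\CompSet$ as $\{\EE{j}{i}\}$ with $1\le j\le\cntTwo$ and $0\le i\le\mel-1$, so that for each fixed $j$ the disks $\EE{j}{0},\ldots,\EE{j}{\mel-1}$ form one $\bZ_{\sgOrd}$-orbit in $\CompSet$. Fix a representative $\gdif\in\PSTB{\MBand}$ with $\eta(\gdif)=1$; by choosing the enumeration appropriately I may arrange $\gdif(\EE{j}{i})=\EE{j}{i+1\bmod\mel}$ for all $i,j$, and by the defining property of type~\ref{dt2} combined with Lemma~\ref{lm:action_g_on_hY}, $\gdif^{\mel}$ sends each $\EE{j}{0}$ to itself while \emph{reversing} its orientation. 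By Theorem~\ref{th:g_eta}(b), $\gdif^{2\mel}=\gdif^{\sgOrd}\in\Agrp:=\PFSTB{\MBand}$ and $\gdif^{2\mel}$ commutes with $\Lgrp:=\ker(\eta)$. Putting $\Hgrp:=\myprod_{j=1}^{\cntTwo}\PST{\EE{j}{0}}$ and $\xi(\vel):=\gdif^{-1}\vel\gdif$, conjugation by $\gdif$ (via the inclusion $\lambda$ of Corollary~\ref{cor:pi0Ker}) identifies $\PST{\EE{j}{i}}$ with $\PST{\EE{j}{i+1\bmod\mel}}$, so $\Hgrp_{i}:=\gdif^{i}\Hgrp\gdif^{-i}=\myprod_{j}\PST{\EE{j}{i}}$; and Theorem~\ref{th:g_eta}(a) gives the direct product decomposition $\Lgrp=\myprod_{i=0}^{\mel-1}\Hgrp_{i}$. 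In particular $\xi^{\mel}(\Hgrp)=\Hgrp$, and $\gamma:=\xi^{\mel}|_{\Hgrp}$ decomposes coordinate-wise into involutions $\gamma_{j}$ of $\PST{\EE{j}{0}}$ each induced by conjugation with an orientation-reversing self-map of $\EE{j}{0}$; $\gamma^{2}=\id_{\Hgrp}$ because $\gdif^{2\mel}$ commutes with $\Lgrp$.

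With these data all hypotheses of Lemma~\ref{lm:3x3_G_H_gamma_m:char} are met: $\eta(\gdif)=1$; $\xi^{2\mel}=\id_{\Lgrp}$; the above product decomposition of $\Lgrp$ (with the $\Ggrp_i$-factors trivial); $\eta(\Agrp)=2\mel\bZ$ and $\gdif^{2\mel}\in\Agrp$ (by Corollary~\ref{cor:stab_3x3_diagram}); and $\Kgrp:=\Agrp\cap\Lgrp=\myprod_{i,j}\PFST{\EE{j}{i}}$ is generated by the conjugates of $\Qgrp:=\Agrp\cap\Hgrp=\myprod_{j}\PFST{\EE{j}{0}}$ by $\gdif^{0},\ldots,\gdif^{\mel-1}$ (the $\Pgrp_{i}$-generators collapse since $\Pgrp=\Agrp\cap\UnitGroup=\UnitGroup$). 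Applying the lemma yields $\PSTB{\MBand}\cong\WrGHZ{\UnitGroup}{\Hgrp}{\gamma}{\mel}$, and since each $\PST{\EE{j}{0}}=\pi_{1}\Orbit{\func|_{\EE{j}{0}}}$ lies in $\classGroups$ by Theorem~\ref{th:class:oriented_case}, so does $\Hgrp$. The main technical point I expect is the orientation bookkeeping: one must verify that $\gdif^{\mel}$ is genuinely orientation-reversing on each $\EE{j}{0}$ (so that $\gamma$ is the nontrivial involution prescribed by~\eqref{equ:Aut_in_WrGH_gamma_m} and not the identity), while $\gdif^{2\mel}$ is orientation-preserving (so $\gamma^{2}=\id_{\Hgrp}$). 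Both follow from type~\ref{dt2} and the equivalence \ref{enum:lm:charKerSAct:h_in_ker}$\Leftrightarrow$\ref{enum:lm:charKerSAct:eta_h__b} of Lemma~\ref{lm:charKerSAct}, given that $\gdif^{2\mel}\in\KerSAct$ while $\gdif^{\mel}\notin\KerSAct$ (its class in $\bZ_{\sgOrd}$ equals $\mel\neq 0$).
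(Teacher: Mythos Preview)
Your proposal is correct and follows essentially the same route as the paper: enumerate the type~\ref{dt2} disks as $\EE{j}{i}$ so that the chosen $\gdif$ from Theorem~\ref{th:g_eta}\ref{enum:th:g_eta:ega_g_1} cyclically shifts the columns, set $\Hgrp=\myprod_{j=1}^{\cntTwo}\PST{\EE{j}{0}}$ with $\Ggrp=\UnitGroup$, and verify the hypotheses of Lemma~\ref{lm:3x3_G_H_gamma_m:char}. Your extra paragraph on the orientation bookkeeping (why $\gdif^{\mel}$ reverses orientation on each $\EE{j}{0}$ while $\gdif^{2\mel}$ preserves it) makes explicit a point the paper leaves implicit, and is handled correctly via Lemma~\ref{lm:charKerSAct}.
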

\begin{proof}
The proof is similar to Lemma~\ref{lm:all_disks_T1}.
Since all orbits are of type~\ref{dt2}, we can also enumerate disks in $\CompSet$ as follows:
\begin{equation}\label{equ:disks_T2}
\begin{array}{cccc}
    \EE{1}{0} & \EE{1}{1} & \cdots & \EE{1}{\mel-1} \\
    \EE{2}{0} & \EE{2}{1} & \cdots & \EE{2}{\mel-1} \\
    \vdots    & \vdots    & \vdots & \vdots      \\
    \EE{\cntTwo}{0} & \EE{\cntTwo}{1} & \cdots & \EE{\cntTwo}{\mel-1} \\
\end{array}
\end{equation}
so that $\gdif$ will cyclically shift the columns to the right, i.e.\ $\gdif(\EE{j}{i})=\EE{j}{i+1\bmod \mel}$ for all $i,j$.
In particular, $\gdif^{\mel}(\EE{j}{i})=\EE{j}{i}$, and the restriction $\gdif^{\mel}:\EE{j}{i}\to\EE{j}{i}$ reverses orientation, see Figure~\ref{fig:case_b:b1:n1}.
\begin{figure}[htbp!]
\includegraphics[height=2.5cm]{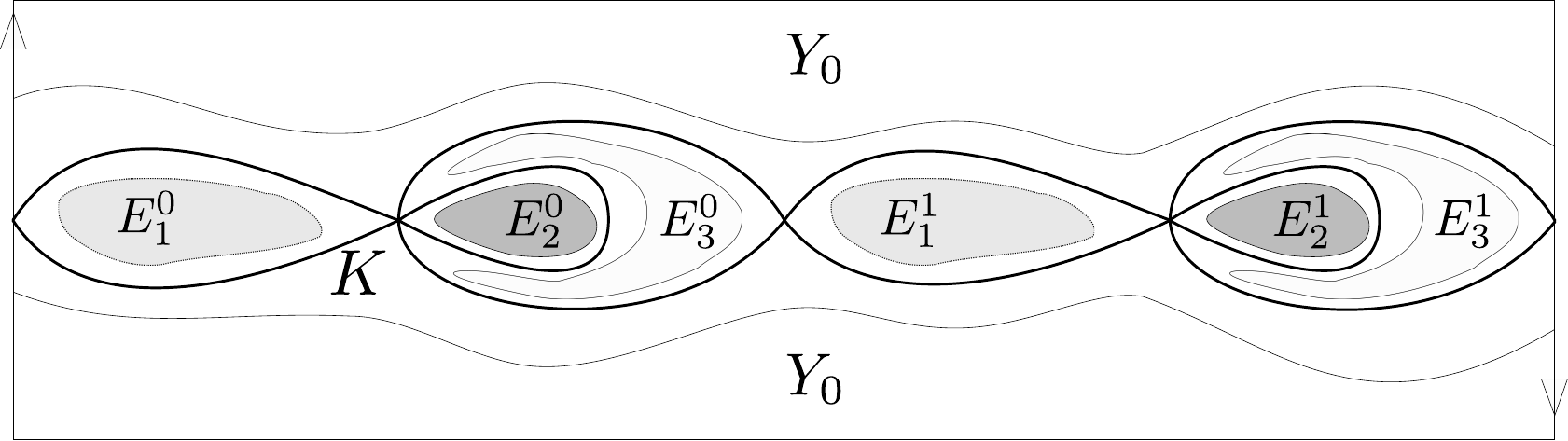}
\mycaption{Case (B): $\cntOne=0$, $\cntTwo=3$, $\sgOrd=4$, $\mel=\sgOrd/2 = 2$, $n=\mel\cntTwo=6$}
{\PSTB{\MBand} \cong \WrGHZ{\UnitGroup}{\myprod_{j=1}^{3}\PST{\EE{j}{0}}}{2}{\gamma}}
{\pi_1\Orbit{\func} \cong \WrGHZ{\UnitGroup}{\myprod_{j=1}^{3}\pi_1\Orbit{\func|_{\EE{j}{0}}}}{2}{\gamma}}
\label{fig:case_b:b4:n6}
\end{figure}

\begin{figure}[htbp!]
\begin{tabular}{ccc}
    \includegraphics[height=2.5cm]{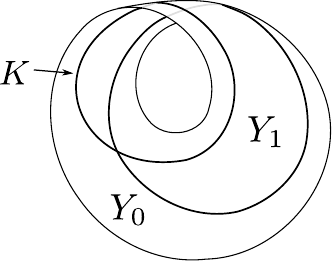} & \qquad\qquad &
    \includegraphics[height=2.5cm]{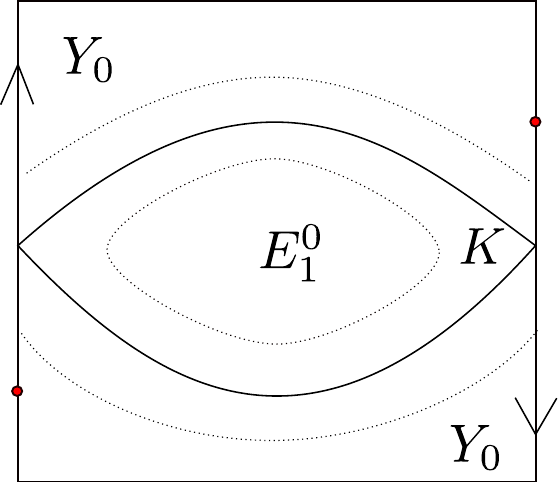}
\end{tabular}
\mycaption{Case (B): $\cntOne=0$, $\cntTwo=1$, $\sgOrd=2$, $\mel=\sgOrd/2 = 1$, $n=\mel\cntTwo=1$}
{\PSTB{\MBand} \cong \WrGHZ{\UnitGroup}{\PST{\EE{1}{0}}}{1}{\gamma}}
{\pi_1\Orbit{\func} \cong \WrGHZ{\UnitGroup}{\pi_1\Orbit{\func|_{\EE{1}{0}}}}{1}{\gamma}}
\label{fig:case_b:b1:n1}
\end{figure}

For $i=0,\ldots,\mel-1$ denote
\begin{align*}
    \Hgrp_i &:= \myprod_{j=1}^{\cntTwo} \PST{\EE{j}{i}},  &
    \Qgrp_i &:= \myprod_{j=1}^{\cntTwo} \PFST{\EE{j}{i}}.
\end{align*}
Let also $\Hgrp:=\Hgrp_0$ and $\Qgrp:=\Qgrp_0$.
Since $\gdif^{i}(\EE{j}{0})=\EE{j}{i}$, we get from Lemma~\ref{lm:action_g_on_hY}\ref{enum:lm:action_g_on_hY:conj_by_g} that $\Hgrp_i:=\gdif^{i}\Hgrp\gdif^{-i}$ and $\Pgrp_i:=\gdif^{i}\Pgrp\gdif^{-i}$ as in Lemma~\ref{lm:3x3_G_H_gamma_m:char}.
Hence, due to diagram~\eqref{equ:diagram:eta_pi0SfdB}, the kernel of $\eta$ is a direct product of subgroups $\Hgrp_i$ conjugated to $\Hgrp$:
\[
    \Lgrp := \ker(\eta) \stackrel{\eqref{equ:diagram:eta_pi0SfdB}}{=} \myprod_{k=1}^{n}\PST{\YYi{k}}
    = \myprod_{i=0}^{\sgOrd-1} \myprod_{j=1}^{\cntTwo} \PST{\EE{j}{i}}
    = \Hgrp_0\times\cdots\times\Hgrp_{\sgOrd-1}.
\]
Moreover, denote $\Agrp :=\PFSTB{\MBand}$.
Then~\eqref{equ:lambda_Deltai__L_cap_Delta} implies that $\Pgrp_i = \Agrp \cap \Hgrp_i$.
Therefore, by diagram~\eqref{equ:diagram:eta_pi0SfdB}, $\Kgrp := \Agrp\cap\Lgrp = \Pgrp_0\times\cdots\times\Pgrp_{\sgOrd-1}$ is generated by groups $\Pgrp_i$.
All other requirements of Lemma~\ref{lm:3x3_G_H_gamma_m:char} are contained in the statement of Theorem~\ref{th:g_eta}.
Namely,
\begin{align*}
&\eta(\gdif)=1,           &
&\eta(\Agrp) = 2\mel\bZ,  &
&\gdif^{2\mel}\in\Agrp,   & 
&\gdif^{2\mel} \ \text{commutes with} \  \Lgrp.
\end{align*}
Moreover, $\gamma:\Hgrp\to\Hgrp$, $\gamma(a)=\gdif^{\mel} a \gdif^{-a}$, is an automorphism of $\Hgrp$ with $\gamma^2=\id_{\Hgrp}$.

Therefore, by Lemma~\ref{lm:3x3_G_H_gamma_m:char}, the diagram~\eqref{equ:diagram:eta_pi0SfdB} is isomorphic to the following one:
\begin{equation*}
    \begin{gathered}
    \xymatrix@C=2.5ex{
        \UnitGroup^{2\mel} \times \Delta_0^{\mel}       \times 0  \ar@{^(->}[r] \ar@{^(->}[d] &
        \UnitGroup^{2\mel} \times \mathcal{S}_0^{\mel}  \times 0  \ar@{->>}[r]  \ar@{^(->}[d] &
        \UnitGroup^{2\mel} \times \mathbf{G}_0^{\mel}   \times 0                \ar@{^(->}[d] \\
        \UnitGroup^{2\mel} \times \Delta_0^{\mel} \times 2\mel\bZ   \ar@{^(->}[r] \ar@{->>}[d]  &
        \WrGHZ{\UnitGroup}{\mathcal{S}_0}{\gamma}{\mel}   \ar@{->>}[d]^-{\eta}\ar@{->>}[r] &
        \WrGHZ{\UnitGroup}{\mathbf{G}_0 }{\gamma}{\mel}   \ar@{->>}[d] \\
        2\mel\bZ                                          \ar@{^(->}[r]   &
        \bZ                                               \ar@{->>}[r] &
        \bZ_{2\mel}
    }
\end{gathered}
\end{equation*}
where 
\begin{align*}
\Delta_0      &= \myprod_{j=1}^{\cntTwo}\PFST{\EE{j}{0}}, &
\mathcal{S}_0 &= \myprod_{j=1}^{\cntTwo}\PST{\EE{j}{0}}, &
\mathbf{G}_0  &= \myprod_{j=1}^{\cntTwo}\GST{\EE{j}{0}}.
\end{align*}

Now, by Theorem~\ref{th:class:oriented_case}, each group $\PST{\EE{j}{0}} \cong \pi_1\Orbit{\func|_{\EE{j}{0}}}$ belongs to $\classGroups$, whence their product $\mathcal{S}_0 \cong \myprod_{j=1}^{\cntTwo}\pi_1\Orbit{\func|_{\EE{j}{0}}} \in\classGroups$ as well.
Moreover, the last diagram contains the statement that $\pi_1\Orbit{\func} \cong \PSTB{\MBand} \cong \WrGHZ{\UnitGroup}{\mathcal{S}_0}{\gamma}{\mel}$.
\end{proof}

\subsection*{Case (C)}
Suppose that disks of both types~\ref{dt1} and~\ref{dt2} are presented.
Then again, since we have disks of type~\ref{dt2}, $\sgOrd$ must be even, and we also put $\mel = \sgOrd/2$.
\begin{lemma}\label{lm:there_are_disks_T1_and_T2}
In the case~{\rm(C)} there exist two groups $\Ggrp,\Hgrp\in\classGroups$ and an automorphism $\gamma:\Hgrp\to\Hgrp$ with $\gamma^2=\id_{\Hgrp}$ such that we have an isomorphism:
\[
    \pi_1\Orbit{\func} \ \cong \ \PSTB{\MBand} \ \cong \ \WrGHZ{\Ggrp}{\Hgrp}{\gamma}{\mel}.
\]
\end{lemma}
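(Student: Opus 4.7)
My plan is to combine the arguments used in the proofs of Lemmas~\ref{lm:all_disks_T1} and~\ref{lm:all_disks_T2} and apply the characterization given by Lemma~\ref{lm:3x3_G_H_gamma_m:char}. Since disks of both types are present, $\sgOrd = 2\mel$ is even, and we enumerate the type~\ref{dt1} disks as $\{\DD{j}{i}\}$ with $j=1,\ldots,\cntOne$ and $i=0,\ldots,2\mel-1$ such that $\gdif(\DD{j}{i}) = \DD{j}{i+1 \bmod 2\mel}$ preserving their orientation, and the type~\ref{dt2} disks as $\{\EE{j}{i}\}$ with $j=1,\ldots,\cntTwo$ and $i=0,\ldots,\mel-1$ such that $\gdif(\EE{j}{i}) = \EE{j}{i+1\bmod\mel}$, with $\gdif^{\mel}$ acting on each $\EE{j}{0}$ as an orientation-reversing involution. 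This enumeration is possible by Lemma~\ref{lm:action_g_on_hY}, and by~\eqref{equ:n_d_e2_b} we have $n = 2\mel\cntOne + \mel\cntTwo$ accounting for every disk in $\CompSet$.

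Next, I would introduce the building-block groups
\begin{align*}
    \Ggrp_i &:= \myprod_{j=1}^{\cntOne}\PST{\DD{j}{i}},  &  \Pgrp_i &:= \myprod_{j=1}^{\cntOne}\PFST{\DD{j}{i}}, & i&=0,\ldots,2\mel-1,\\
    \Hgrp_i &:= \myprod_{j=1}^{\cntTwo}\PST{\EE{j}{i}},  &  \Qgrp_i &:= \myprod_{j=1}^{\cntTwo}\PFST{\EE{j}{i}}, & i&=0,\ldots,\mel-1,
\end{align*}
and set $\Ggrp:=\Ggrp_0$, $\Pgrp:=\Pgrp_0$, $\Hgrp:=\Hgrp_0$, $\Qgrp:=\Qgrp_0$. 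By Lemma~\ref{lm:action_g_on_hY}\ref{enum:lm:action_g_on_hY:conj_by_g}, conjugation by $\gdif$ yields $\Ggrp_i = \gdif^{i}\Ggrp\gdif^{-i}$ and $\Hgrp_i = \gdif^{i}\Hgrp\gdif^{-i}$, while $\gdif^{\mel}$ preserves $\Hgrp$ and acts on it by an order-$2$ automorphism $\gamma$ induced by its orientation-reversal on each $\EE{j}{0}$. Theorem~\ref{th:g_eta}\ref{enum:th:g_eta:ker_eta} together with diagram~\eqref{equ:diagram:eta_pi0SfdB} then identifies
\[
    \ker(\eta) = \myprod_{i=0}^{2\mel-1}\Ggrp_i \ \times\ \myprod_{i=0}^{\mel-1}\Hgrp_i,
\]
so this kernel splits as the direct product of the $\gdif$-conjugates of $\Ggrp$ and $\Hgrp$. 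Analogously, setting $\Agrp := \PFSTB{\MBand}$, identity~\eqref{equ:lambda_Deltai__L_cap_Delta} gives $\Pgrp_i = \Agrp\cap\Ggrp_i$ and $\Qgrp_i = \Agrp\cap\Hgrp_i$, whence $\Kgrp := \Agrp\cap\ker(\eta)$ is generated by the $\Pgrp_i$ and $\Qgrp_i$.

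At this point all hypotheses of Lemma~\ref{lm:3x3_G_H_gamma_m:char} are verified: hypothesis~\ref{enum:lm:3x3_G_H_gamma_m:char:g2m_commutes_with_L} is exactly Theorem~\ref{th:g_eta}\ref{enum:th:g_eta:ega_g_1}, hypothesis~\ref{enum:lm:3x3_G_H_gamma_m:char:gm_H_H} is the $\gamma$-invariance of $\Hgrp$, \ref{enum:lm:3x3_G_H_gamma_m:char:L_GixHj} is the direct product decomposition just obtained, while~\ref{enum:lm:3x3_G_H_gamma_m:char:b_2m} and~\ref{enum:lm:3x3_G_H_gamma_m:char:K_gen_Pi_Qj} follow from Theorem~\ref{th:g_eta}\ref{enum:th:g_eta:ega_g_1} and the lower row of~\eqref{equ:diagram:eta_pi0SfdB}. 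Applying Lemma~\ref{lm:3x3_G_H_gamma_m:char} then yields an isomorphism of the middle row of~\eqref{equ:diagram:eta_pi0SfdB} with the middle row of $\WrGHZ{\Ggrp}{\Hgrp}{\gamma}{\mel}\epiArrow\bZ$, i.e.\ $\PSTB{\MBand} \cong \WrGHZ{\Ggrp}{\Hgrp}{\gamma}{\mel}$. Finally, by Theorem~\ref{th:class:oriented_case}\ref{enum:th:class:oriented_case:not_torus}, each $\PST{\DD{j}{0}}$ and $\PST{\EE{j}{0}}$ belongs to $\classGroups$, hence so do $\Ggrp$ and $\Hgrp$ by closure under direct products (property~\ref{enum:def:classG:x} of Definition~\ref{def:classG}), completing the proof.

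The only step requiring genuine care is verifying that the automorphism induced by $\gdif^{\mel}$-conjugation on $\Hgrp$ is exactly the $\gamma$ predicted by the definition of $\WrGHZ{\Ggrp}{\Hgrp}{\gamma}{\mel}$ in Section~\ref{sect:grp:GHwr_gamma_m_Z}; in practice this amounts to checking that Lemma~\ref{lm:action_g_on_hY}\ref{enum:lm:action_g_on_hY:conj_by_g} applied to the orientation-reversing restriction $\gdif^{\mel}|_{\EE{j}{0}}$ produces an order-$2$ automorphism of $\PST{\EE{j}{0}}$ (thus of $\Hgrp$) compatible with the cyclic shift on the factors $\Ggrp_i$. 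Once this compatibility is established, the desired isomorphism follows mechanically from Lemma~\ref{lm:3x3_G_H_gamma_m:char}, which is precisely what that algebraic lemma was designed for.
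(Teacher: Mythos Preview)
Your proposal is correct and follows essentially the same approach as the paper: the paper's own proof simply says to enumerate the type~\ref{dt1} and type~\ref{dt2} disks as in~\eqref{equ:disks_T1} and~\eqref{equ:disks_T2}, define $\Ggrp$ and $\Hgrp$ exactly as in the two preceding lemmas, and apply Lemma~\ref{lm:3x3_G_H_gamma_m:char}, leaving the details to the reader. Your write-up fills in precisely those details; the only minor imprecision is the phrase ``preserving their orientation'' for $\gdif$ on the $\DD{j}{i}$ (this need not hold and is irrelevant to the conjugation argument), and calling $\gdif^{\mel}|_{\EE{j}{0}}$ an ``involution'' (it is only the induced conjugation on $\Hgrp$ that has order dividing $2$, via Theorem~\ref{th:g_eta}\ref{enum:th:g_eta:ega_g_1}).
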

\begin{proof}
In this case the disks of type~\ref{dt1} can be enumerated as~\eqref{equ:disks_T1}, while the disks of type~\ref{dt2} can be enumerated as~\eqref{equ:disks_T2}.
Now one can define groups $\Ggrp$ and $\Hgrp$ exactly as in previous two lemmas and by means of Lemma~\ref{lm:3x3_G_H_gamma_m:char} show existence not only of an isomorphism $\PSTB{\MBand} \ \cong \ \WrGHZ{\Ggrp}{\Hgrp}{\gamma}{\mel}$, but also of an isomorphism of the diagram~\eqref{equ:diagram:eta_pi0SfdB} onto $(3\times3)$-diagram from that lemma.
We leave the details for the reader, see also Figures~\ref{fig:case_c:b2:n:4} and~\ref{fig:case_c:b6:n:12}.
\end{proof}

\begin{figure}[htbp!]
\begin{tabular}{ccc}
\includegraphics[height=2.5cm]{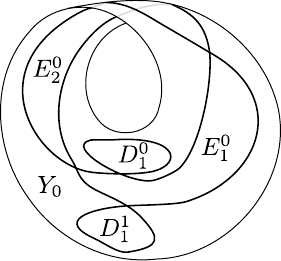} & \qquad\qquad &
\includegraphics[height=2.5cm]{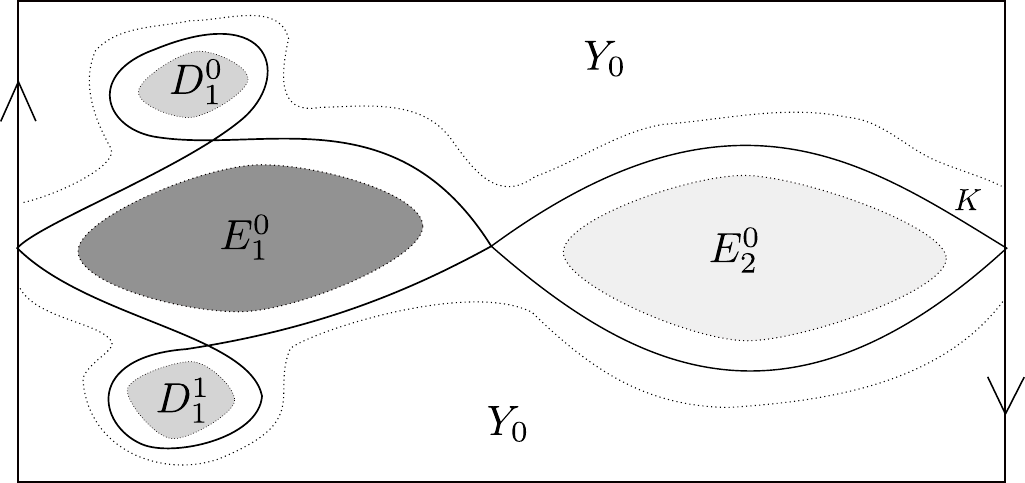}
\end{tabular}
\mycaption{Case (C): $\cntOne=1$, $\cntTwo=2$, $\sgOrd = 2$, $\mel=1$, $n = \sgOrd\, (\cntOne + \cntTwo/2) = 4$}
{\PSTB{\MBand} \cong \WrGHZ{\PST{\DD{1}{0}}}{ \  \PST{\EE{1}{0}} \times \PST{\EE{2}{0}} \, }{1}{\gamma}}
{}
\label{fig:case_c:b2:n:4}
\end{figure}
\begin{figure}[htbp!]
\includegraphics[height=2.4cm]{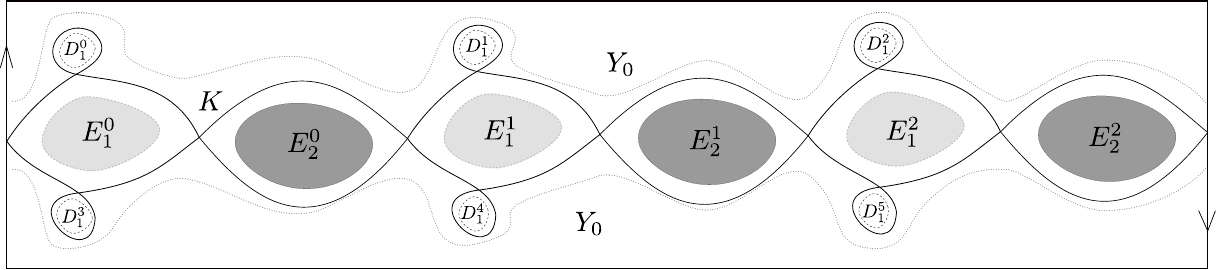}
\mycaption{{Case (C): $\cntOne=1$, $\cntTwo=2$, $\sgOrd = 6$, $\mel=3$, $n = \sgOrd\, (\cntOne + \cntTwo/2) = 12$}}
{\PSTB{\MBand} \cong \WrGHZ{\PST{\DD{1}{0}}}{ \  \PST{\EE{1}{0}} \times \PST{\EE{2}{0}} \, }{3}{\gamma}}
{}
\label{fig:case_c:b6:n:12}
\end{figure}

\section{Proof of Theorem~\ref{th:g_eta}}\label{sect:proof:th:g_eta}
Suppose the cylinder $\YYi{0}$ contains no critical points of $\func$.

\ref{enum:th:g_eta:ker_eta}
Let us check that $\ker(\eta) = \myprod_{i=1}^{n}\PST{\YYi{i}}$.
Suppose $\dif\in\STB{\MBand}$ is supported in $\mathop{\cup}\limits_{i=1}^{n}\YYi{i}$, i.e.\ fixed on $\regU{\CrComp}\cup\YYi{0} \supset \QQi{0}$.
In other words, $\dif|_{\QQi{0}}=\id_{\QQi{0}}$, whence its unique lifting $\pdif:\tQman_{0}\to\tQman_{0}$ fixed on $\bR\times0$ must be the identity, so $\pdif=\id_{\tQman_{0}}$.
Hence, $\pdif(J_k)=J_k$ for all $k\in\bZ$, and therefore $\eta(\dif)=0$.
This implies that $\myprod_{i=1}^{n}\PST{\YYi{i}} \subset \ker(\eta)$.

Conversely, since $\KerSAct = \eta^{-1}(\sgOrd\bZ)$, we have that $\ker(\eta) \subset \KerSAct$.
Moreover, by Corollary~\ref{cor:pi0Ker}, there is an isomorphism 
\[ \pi_0\KerSAct \cong \bigl(\myprod_{i=1}^{n}\PST{\YYi{i}}\bigr) \times \bZ,\]
where the generator of the multiple $\bZ$ corresponds to the Dehn twist $\tau$ along $\partial\MBand$ supported in $\YYi{0}$.
But, due to~\eqref{equ:eta_tau_b}, $\eta(\tau)=\sgOrd$, whence $\ker(\eta) \subset \myprod_{i=1}^{n}\PST{\YYi{i}}$.

\ref{enum:th:g_eta:ega_g_1}
We need to find $\gdif\in\STB{\MBand}$ such that
\begin{enumerate}[label={\alph*)}, leftmargin=8ex]
\item\label{enum:g_prop:1} $\eta(\gdif)=1$,
\item\label{enum:g_prop:gb_in_FS} $\gdif^{\sgOrd}\in \pi_0\FolStabilizer{\func,\partial\MBand}$,
\item\label{enum:g_prop:gb_comm_ker_eta} $\gel^{\sgOrd}$ also commutes with $\ker(\eta)$.
\end{enumerate}
The proof resembles~\cite[Lemma~13.1(3)]{Maksymenko:TA:2020}, and also uses the result from the paper~\cite{KuznietsovaMaksymenko:PIGC:2020} which was especially written to include all technicalities needed for the proof of the existence of $\gdif$.

If $\sgOrd=1$ we can take $\gdif=\id_{\MBand}$ and all conditions will trivially hold.

Thus suppose $\sgOrd\geq2$.
Take any $\qdif\in\STB{\MBand}$ such that $\eta(\qdif)=1$.
We will change $\qdif$ so that conditions \ref{enum:g_prop:1}-\ref{enum:g_prop:gb_comm_ker_eta} will be satisfied.

Recall that the disks $\YYi{i}$, $i=1,\ldots,n$, can be divided into disks of types~\ref{dt1} and~\ref{dt2}, and also enumerated in accordance with~\eqref{equ:disks_T1} and~\eqref{equ:disks_T2},
so that $\qdif$ will cyclically shift the columns to the right.
Also if disks of type~\ref{dt2} are presented, then $\sgOrd$ is even and in this case we put $\mel=\sgOrd/2$.

It will also be convenient to denote by
\begin{align*}
    \DDLast&:= \mathop{\cup}\limits_{j=1}^{\cntOne} \DD{j}{\sgOrd-1},
    &
    \EELast&:= \mathop{\cup}\limits_{j=1}^{\cntTwo} \EE{j}{\mel-1},
\end{align*}
the unions of disks standing in the last columns of \eqref{equ:disks_T1} and~\eqref{equ:disks_T2}.
These sets might be empty, if the disks of the corresponding types are not presented.

Now we need to construct two additional diffeomorphisms $\dif$ and $\kdif$ depending on existence of disks of those types.

(1) Suppose there are disks of type~\ref{dt1}.
Note that $\qdif^{\sgOrd}\in\KerSAct=\eta^{-1}(\sgOrd\bZ)$, since $\eta(\qdif^{\sgOrd})=\sgOrd$.
Recall that due to Lemma~\ref{lm:pi0Ker} the inclusion 
\[j:\StabilizerNbh{\func,\regU{\Kman}\cup\partial\MBand} \subset \KerSAct\]
is a homotopy equivalence, whence $\qdif^{\sgOrd}$ is isotopic in $\KerSAct$ to some diffeomorphism $\dif_0\in\StabilizerNbh{\func,\regU{\Kman}\cup\partial\MBand}$.
Moreover, ``releasing $\partial\MBand$'' we see that $\dif_0$ is in turn isotopic in $\Stabilizer{\func}$ to a diffeomorphism $\dif$ fixed on $\regU{\CrComp}\cup\YYi{0}$ via an isotopy supported in $\YYi{0}$.

Hence, $\dif^{-1}\circ\qdif^{\sgOrd}\in\StabilizerId{\func}$.
Then for each $i\in\{0,1,\ldots,\sgOrd-1\}$ we have that
\begin{enumerate}[label={\rm(h\arabic*)}]
\item\label{enum:h_prop:1} $\qdif^{i}\circ\dif^{-1}\circ\qdif^{\sgOrd-i} \in \StabilizerId{\func}$, since $\StabilizerId{\func}$ is a normal subgroup of $\Stabilizer{\func}$;
\item\label{enum:h_prop:3} $\dif^{-1}\circ\qdif = \qdif:\DDLast\to\DDX{0}$, since in particular, $\dif$ is fixed near $\partial\DDLast$.
\end{enumerate}

(2) Suppose now that there are disks of type~\ref{dt2}.
Then for each $i,j$ we have a reversing orientation diffeomorphism $\qdif^{\mel}:\EE{j}{i}\to\EE{j}{i}$.
In particular, by~\cite[Theorem~3.5]{KuznietsovaMaksymenko:PIGC:2020}, applied to each connected components of $\EELast$, there exists a diffeomorphism $\kdif:\EELast\to\EELast$ such that $\kdif=\qdif^{\mel}$ near $\partial\EELast$, and $\kdif^2\in\StabilizerId{\func|_{\EELast}}$.
It follows that
\begin{enumerate}[leftmargin=7ex, label={\rm(k\arabic*)}]
\item\label{enum:k_prop:1} $\kdif^{-1}\circ\qdif^{1-\mel} = \qdif: \EEX{\mel-1}\to\EEX{0}$ near some neighborhood of $\partial\EELast$.
\end{enumerate}

Now define the map $\gdif:\MBand\to\MBand$ by the following rule:
\[
\gdif(\px) =
\begin{cases}
    \dif^{-1}\circ\qdif,           & \px \in \DDLast, \\
    \kdif^{-1}\circ\qdif^{1-\mel}, & \px \in \EELast, \\
    \qdif(\px),                    & \px\in\MBand\setminus(\DDLast\cup\EELast).
\end{cases}
\]
Due to~\ref{enum:h_prop:3} and~\ref{enum:k_prop:1}, $\gdif$ is a diffeomorphism.
It is easy to see that $\gdif$ is also $\func$-preserving, i.e.\ $\gdif\in\STB{\MBand}$.
We claim that $\gdif$ satisfies conditions \ref{enum:g_prop:1}-\ref{enum:g_prop:gb_comm_ker_eta}.

\ref{enum:g_prop:1}
Since $\gdif=\qdif$ on $\MBand\setminus(\DDLast\cup\EELast) \supset \QQi{0}$, it follows that $\eta(\dif)=\eta(\qdif)=1$.

\ref{enum:g_prop:gb_in_FS}
Since $\gdif$, and therefore $\gdif^{\sgOrd}$, are fixed on $\partial\MBand$, it suffices to prove that $\gdif^{\sgOrd}\in\FolStabilizer{\func}$.

Moreover, $\gdif^{\sgOrd} = \qdif^{\sgOrd} \in \KerSAct$, and therefore it leaves invariant each $\YYi{i}$, $i=0,\ldots,n$, and $\regU{\CrComp}$.
Therefore, due to Lemma~\ref{lm:h_in_FST}, if $A$ denotes either of the surfaces $\YYi{i}$ or $\regU{\CrComp}\cup\YYi{0}$, then we need to check that the restriction of $\gdif^{\sgOrd}|_{A}$ belongs to the corresponding group $\FolStabilizer{\func|_{A}}$.

Since $\gdif^{\sgOrd} \in \KerSAct$, it follows that $\gdif^{\sgOrd}$ leaves invariant all contours of $\func$ contained in $\regU{\CrComp}\cup\YYi{0}$, i.e.\ $\gdif^{\sgOrd}|_{\regU{\CrComp}\cup\YYi{0}}\in\FolStabilizer{\func|_{\regU{\CrComp}\cup\YYi{0}}}$.

Furthermore, suppose there are disks of type~\ref{dt1}.
Then for each $i\in\{0,\ldots,\sgOrd-1\}$ the map $\gdif^{\sgOrd}|_{\DDX{i}}:\DDX{i}\to\DDX{i}$ is the following composition:
\[
   \qdif^{i}\circ\dif^{-1}\circ\qdif^{\sgOrd-i}:
    \DDX{i}
        \underbrace{
            \xrightarrow{\qdif} \DDX{i+1} \xrightarrow{\qdif} \cdots \xrightarrow{\qdif}
        }_{\sgOrd-i-1 \ \text{arrows}}
    \DDLast
    \xrightarrow{\dif^{-1}\circ\qdif}
    \DDX{0}
    \underbrace{
        \xrightarrow{\qdif} \cdots \xrightarrow{\qdif}
        }_{i \ \text{arrows}}
    \DDX{i},
\]
which by~\ref{enum:h_prop:1} belongs to $\StabilizerId{\func|_{\DDX{i}}} \subset \FolStabilizer{\func|_{\DDX{i}}}$.

Similarly, suppose that there exist disks of type~\ref{dt2}.
Then for each $i\in\{0,\ldots,\mel-1\}$ the restriction map $\gdif^{\mel}:\EEX{i}\to\EEX{i}$ is a composition of the following maps:
\[
    \qdif^{i}\circ\kdif^{-1}\circ\qdif^{-i}:
    \EEX{i}
        \underbrace{
            \xrightarrow{\qdif} \EEX{i+1} \xrightarrow{\qdif} \cdots \xrightarrow{\qdif}
        }_{\mel-i-1 \ \text{arrows}}
    \EELast
    \xrightarrow{\kdif^{-1}\circ\qdif^{1-\mel}}
    \EEX{0}
    \underbrace{
        \xrightarrow{\qdif} \cdots \xrightarrow{\qdif}
        }_{i \ \text{arrows}}
    \EEX{i}.
\]
Whence $\gdif^{\sgOrd}=\gdif^{2\mel}=\qdif^{i}\circ\kdif^{-2}\circ\qdif^{-i} \in\StabilizerId{\func|_{\EEX{i}}} \subset\FolStabilizer{\func|_{\EEX{i}}}$, since $\kdif^{2}\in\StabilizerId{\func|_{\EEX{0}}}$.

\ref{enum:g_prop:gb_comm_ker_eta}
It remains to check that the isotopy class of $\gdif^{\sgOrd}$ in $\PSTB{\MBand}$ commutes with $\ker(\eta) = \myprod_{i=1}^{n}\PST{\YYi{i}}$.
Fix any $\func$-regular neighborhood $\Uman'$ of $\regU{\CrComp}$.

Let $\gamma \in \ker(\eta) = \myprod_{i=1}^{n}\PST{\YYi{i}} \subset\PSTB{\MBand}$ be any element and $\dif\in\STB{\MBand}$ be any representative of $\gamma$.
Then one can assume that $\dif$ is fixed on $\Uman'\cup\YYi{0}$.
We will show that $\gdif^{\sgOrd}$ is isotopic in $\STB{\MBand}$ to a diffeomorphism supported in $\Uman'\cup\YYi{0}$, and therefore commuting with $\dif$.
This will imply that the isotopy classes of $\gdif^{\sgOrd}$ and $\dif$ commute in $\PSTB{\MBand}$.

Indeed, for $i=1,\ldots,n$ denote $\Uman_i:=\YYi{i} \cap (\Uman'\setminus\regU{\CrComp})$.
Then $\Uman_i$ is evidently a $\func|_{\YYi{i}}$-regular neighborhood (even a \term{collar}) of $\partial\YYi{i}$.
As noted in the proof of~\ref{enum:g_prop:gb_in_FS}, $\gdif^{\sgOrd}|_{\YYi{i}} \in\StabilizerId{\func|_{\YYi{i}}}$, $i=1,\ldots,n$.
Hence, by Lemma~\ref{lm:h_Sidf__SidfX}, $\gdif^{\sgOrd}|_{\YYi{i}}$ is isotopic relatively some neighborhood of $\partial\YYi{i}$ to a diffeomorphism supported in $\Uman_i$.
That isotopy extends by the identity to an isotopy of all $\gdif^{\sgOrd}$.

Applying these arguments for all $i=1,\ldots,n$, we deform $\gdif^{\sgOrd}$ in $\STB{\MBand}$ to a diffeomorphism supported in $\Uman'\cup\YYi{0}$.

This completes the proof of Theorem~\ref{th:g_eta}.

\subsection*{Acknowledgments}
The authors are also grateful to Dmitrii Pasechnik and Dave Benson for very useful explanations and discussions on MathOverflow of the finiteness of a codimension of ideals in algebras of polynomials, \cite{Pasechnik:MO:FinCodim, Maksymenko:MO:IdFinCodim}.

\def\cprime{$'$} \def\cprime{$'$} \def\cprime{$'$} \def\cprime{$'$}
  \def\cprime{$'$} \def\cprime{$'$} \def\cprime{$'$} \def\cprime{$'$}
  \def\cprime{$'$} \def\cprime{$'$} \def\cprime{$'$} \def\cprime{$'$}
  \def\cprime{$'$} \def\cprime{$'$}

\end{document}